\newtheorem{theorem}{Theorem}[section]
\newtheorem{prop}[theorem]{Proposition}
\newtheorem{lemma}[theorem]{Lemma}
\newtheorem{corollary}[theorem]{Corollary}
\theoremstyle{remark}
\newtheorem{remark}{Remark}
\newtheorem{example}{Example}
\newtheorem{definition}[theorem]{Definition}
\newcommand\scalemath[2]{\scalebox{#1}{\mbox{\ensuremath{\displaystyle #2}}}}
\begin{document}

\title{Orthogonal polynomial duality and unitary symmetries  of multi--species ASEP$(q,\boldsymbol{\theta})$  and  higher--spin vertex models via $^*$--bialgebra structure of higher rank quantum groups}

\author[1]{Chiara Franceschini}
\author[2]{Jeffrey Kuan}
\author[2]{Zhengye Zhou}
\affil[1]{University of
 Modena and Reggio Emilia, FIM, via G. Campi 213/b, 41125 Modena, Italy}
\affil[2]{Texas A\&M University, Department of Mathematics, College Station TX, 77843-3368.}

\date{}
\maketitle 

\abstract{We propose a  general method to produce orthogonal polynomial dualities from the $^*$--bialgebra structure of Drinfeld--Jimbo quantum groups. The $^*$--structure allows for the construction of certain \textit{unitary} symmetries, which imply the orthogonality of the duality functions. In the case of the quantum group $\mathcal{U}_q(\mathfrak{gl}_{n+1})$, the result is a nested multivariate $q$--Krawtchouk duality for the $n$--species ASEP$(q,\boldsymbol{\theta}) $. The method also applies to  other quantized simple Lie algebras  and to stochastic vertex models. 

As a probabilistic application of the duality relation found, we provide the explicit formula of the $q-$shifted factorial moments (namely the $q$-analogue of the Pochhammer symbol) for the two--species $q$--TAZRP (totally asymmetric zero range process).}

\section{Introduction}
Over the last several years, there has been a flurry of interest in so--called multi--species models, also known as colored or multi--class models \cite{BS,BS2,BS3,BGW,Gal,Kuan-IMRN,KuanCMP,KMMO,Take15,ZZ}. In the context of interacting particle systems, these were first introduced in 1976 by \cite{Ligg76}. More recently, algebraic methods have been developed to find Markov duality for these multi--species models \cite{BS,BS2,BS3,Kuan-IMRN,KuanCMP}, generalizing Sch\"{u}tz's method in \cite{Sch97}. In these papers, the duality functions are ``triangular'' duality functions, in the sense that when viewed as matrices they are lower triangular matrices. However, a priori there is not necessarily a reason to believe that triangular duality functions can be used for probabilistic applications, although at times they are \cite{CST,YLKPZ} in the context of KPZ universality class and for a single species only.

In some recent works by \cite{FGG, CFGGR, Gro19, CFG21}, algebraic methods have been developed to find \textbf{orthogonal polynomial} duality functions for interacting particle systems, both in the symmetric and asymmetric cases. Regarding applications we mention \cite{FGS} where duality relations are used to study the hydrodynamic limit of the inclusion process. In \cite{FRS} orthogonal duality functions are used to infer information and partially characterize the non-equilibrium stationary measure of an open system of interacting particles. In \cite{ACR18} and \cite{ACR21} the authors first present a quantitative Boltzmann–Gibbs principle used to study macroscopic fluctuations and then they study scaling limits of higher order fluctuation fields. In all the above it was crucial to have duality functions which are polynomials orthogonal with respect to the reversible measure of the Markov process considered. These orthogonal polynomial duality functions are advantageous, from a probabilistic application, because not only one can take advantage of the duality relation but also of the orthogonal relation and the fact that observables of interest can be written in terms of such orthogonal duality functions.  In a sense, the orthogonality guarantees that the duality function is ``useful'', as long as the dual process can be analyzed.

 In general, having a duality relation for a particle system yields a dual process with finitely many and, in particular, very few particles. Via the analysis of these few dual particles one can gather information on  some relevant quantities for the initial process. These quantities (e.g. truncated correlations for symmetric particle systems or $q^{-2}$ exponential moments for asymmetric ones) are suggested by the type of duality function one has.

In this paper, we introduce a general method to produce orthogonal polynomial dualities, using the $^*$--structure of bialgebras. The $^*$--structure allows for the construction of unitary symmetries, from which the orthogonality of the duality functions will follow. This method is used explicitly in the case of the Drinfeld--Jimbo quantum group $\mathcal{U}_q(\mathfrak{gl}_{n+1})$, which is an underlying algebraic structure of the  $n$--species ASEP$(q,\boldsymbol{\theta})$. As far as the authors know, this is the first probabilistic application of $^*$--bialgebras. This method is expected to work for other simple Lie algebras \cite{REU2022}. Note that this method is not the same as the ``inner product method'' of \cite{CFGGR} used to construct new duality functions starting from known ones, which turns out to not generalize to higher--rank Lie algebras (see Remark \ref{InnerProductRemark}). 
The idea, as in \cite{CFGGR}, relies on the identification of the unitary symmetry which allows to produce duality functions in terms of orthogonal polynomials. However, in our case, we identify such unitary symmetry without relying on the knowledge of the expression of the triangular self-duality function.

The $n$--species ASEP$(q,\boldsymbol{\theta})$ allows for $\theta^x$ particles to occupy  site $x$, and has an asymmetry parameter of $q^{\boldsymbol\theta}$. In the $\boldsymbol\theta\rightarrow \infty$ limit the process converges to a $n$--species $q$--TAZRP (totally asymmetric zero range process), in which arbitrarily many particles may occupy a site and all jumps occur in one direction. Applying a charge--parity symmetry, the duality function converges to a nontrivial $q$--Pochhammer. Furthermore, in this limit, the reversible measures converge to a deterministic initial condition, since the totally asymmetric process is not reversible. This suggests, a priori, that the duality can be used to find explicit formulas for the $n$--species $q$--TAZRP. Indeed,  for the $2$--species $q$--TAZRP, we use the  polynomial duality to give exact formulas for the $q$--shifted factorial moments that can be used to yield the rate of decorrelation at two points. This uses that the process is integrable, in the sense that its generator satisfies the Yang--Baxter equation, and has explicit transition probabilities expressed in terms of contour integral formulas. Indeed, an analysis of the dual (finite--particle) process involves explicit contour integral formulas for certain observables of the multi--species $q$--TAZRP, very recently found by the second author in \cite{Kua21}.

 The multi--species $q$--TAZRP is itself a special case of the multi--species $q$--Hahn TAZRP, which is itself a special case of the multi--species higher spin stochastic vertex model. Using the general algebraic setup of \cite{KuanCMP}, the orthogonality polynomial duality function for the ASEP$(q,\boldsymbol\theta)$ is also a duality function for the vertex model, after applying the charge--parity symmetry (CP symmetry). Figure \ref{Deg} shows all the processes discussed in the paper. The $\boldsymbol\theta\rightarrow \infty$ limit corresponds to the $\mu\rightarrow 0$ limit, so thus an explicit proof is additionally needed for the $q$--Hahn TAZRP.

\begin{figure}

\includegraphics[width=1\textwidth]{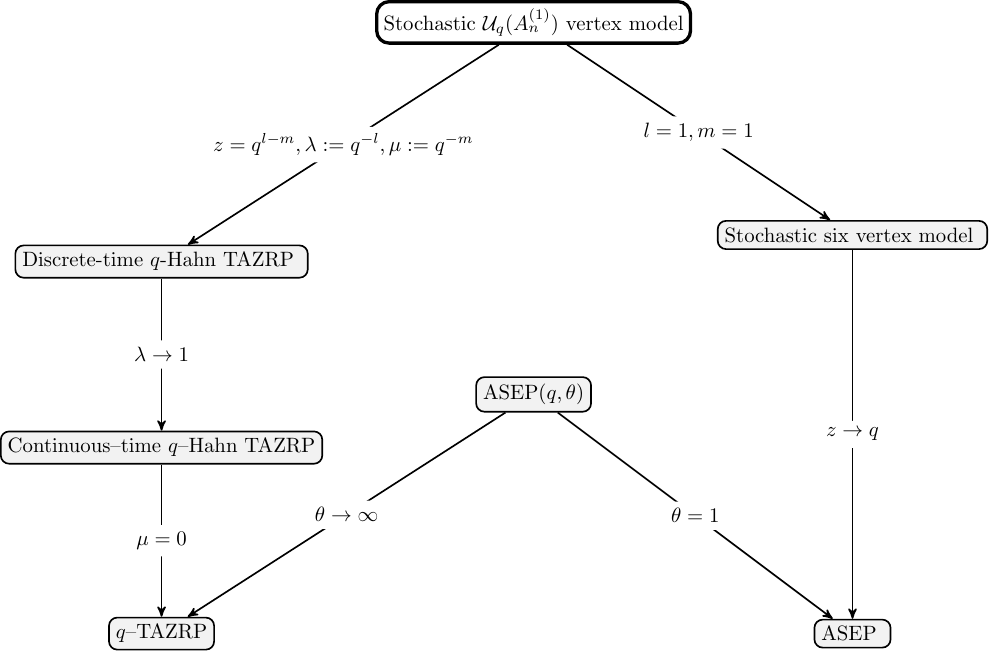}
\caption{The various degenerations and limits explicitly mentioned in this paper. See  \cite{KuanCMP} for some more degenerations not mentioned in this paper. }
\label{Deg}
\end{figure}

The outline of the paper is as follows. Section \ref{Definitions} defines notation, formulas, and we introduce all the models in a close boundary setting. Section \ref{MR} states the main results and it ends with an application of one of the duality relations shown. The proofs of all results are in section \ref{Pfs}.  Note that the method of the proof generalizes to other Lie algebras, as discussed in Remark \ref{REURem}.
\section{Definitions}\label{Definitions}

\subsection{$q$--notation}
For any $q$ which is not a root of unity, define the $q$--deformed integer, factorial, binomial, and multinomial by 
\begin{equation*}
[n]_q = \frac{q^n-q^{-n}}{q-q^{-1}}, \quad [n]_q^! = [1]_q \cdots [n]_q, 
\end{equation*}
\begin{equation*}
    \quad \binom{n}{k}_q = \frac{[n]_q^!}{[k]_q^! [n-k]_q^!}, \quad \binom{n}{k_1,\ldots,k_l}_q = \frac{[n]_q^!}{[k_1]_q^! \cdots [k_l]_q^!},    
\end{equation*}
and the $q$--Pochhammer for integer $n>0$:
\begin{equation*}
(a ; q)_{n}=(1-a)(1-a q) \cdots\left(1-a q^{n-1}\right), \quad(a ; q)_{0}:=1.
\end{equation*}
\begin{equation*}
    (a;q)_{-n} = \frac{1}{\prod_{k=1}^n (1-aq^{-k})} \;, \qquad \qquad a \neq q,q^2,q^3, \ldots, q^n.
\end{equation*}

A second $q$--deformed integer and factorial that will be used is
\begin{equation*}
\{n\}_{q^{2}}=\frac{1-q^{2 n}}{1-q^{2}}, \quad \{n\}_{q^2}^! = \{1\}_{q^2} \cdots \{n\}_{q^2}.
\end{equation*}
The two $q$--deformations are related by 
\begin{equation}\label{Relate}
q^{n-1}[n]_q = \{n\}_{q^2} , \quad q^{n(n-1)/2}[n]_q^! = \{n\}_{q^2}^!.
\end{equation}
The first $q$--deformation with the square brackets is more natural in an algebraic setting, because it reflects the $q\mapsto q^{-1}$ symmetry, while the second $q$--deformation with the curly brackets is more natural in a probabilistic setting, because $q^2$ is the asymmetry parameter in the asymmetric simple exclusion process.
Throughout the paper we will assume $0<q<1$. Also, define the $q$--exponentials
\begin{equation*}
e_{q}(z)=\sum_{n=0}^{\infty} \frac{z^{n}}{(q ; q)_{n}}=\frac{1}{(z ; q)_{\infty}}, \quad\mid z\mid<1,    
\end{equation*} 
and
\begin{equation*}
   \mathcal{E}_{q}(z)=\sum_{n=0}^{\infty} \frac{q^{\binom{n}{2}} z^{n}}{(q ; q)_{n}}=(-z ; q)_{\infty}. 
\end{equation*}
The more common notation for $\mathcal{E}_q$ is $E_q$, but in this paper the $\mathcal{E}$ will be used to avoid notation conflict with the quantum group generators $E_{ij}$. The $q$--exponentials satisfy two $q$--Baker--Campbell--Hausdorff equations, the first being
\begin{equation*}
\mathcal{E}_{q}(\lambda X) Y e_{q}\left(-\lambda q^{\alpha} X\right)=\sum_{n=0}^{\infty} \frac{\lambda^{n}}{(q ; q)_{n}}[X, Y]_{n},
\end{equation*}
where
\begin{equation*}
[X, Y]_{0}=Y, \quad[X, Y]_{n+1}=q^{n} X[X, Y]_{n}-q^{\alpha}[X, Y]_{n} X, \quad n=1,2, \ldots,
\end{equation*}
and the second being
\begin{equation}\label{SecondEq}
e_{q}(\lambda X) Y \mathcal{E}_{q}(-\lambda X)=\sum_{n=0}^{\infty} \frac{\lambda^{n}}{(q ; q)_{n}}[X, Y]_{n}^{\prime},
\end{equation}
where
$$
\qquad[X, Y]_{0}^{\prime}=Y, \quad[X, Y]_{n+1}^{\prime}=X[X, Y]_{n}^{\prime}-q^{n}[X, Y]_{n}^{\prime} X, \quad n=0,1,2, \ldots.
$$

If $xy=q^2yx$ then 
\begin{equation}\label{EFactor}
\mathcal{E}_{q^{2}}(x+y)=\mathcal{E}_{q^{2}}(x) \cdot \mathcal{E}_{q^{2}}(y) \quad \text { and } \quad e_{q^{2}}(x+y)=e_{q^{2}}(y) \cdot e_{q^{2}}(x).
\end{equation}

The $e_{q^2}$ and $\mathcal{E}_{q^2}$ are mutual inverses of each other, in the sense that
\begin{equation}\label{Inverses}
e_{q^2}(z)\mathcal{E}_{q^2}(-z)=1.
\end{equation}

\subsection{Orthogonal Polynomials}
Recall the $q$--hypergeometric series
\begin{equation}
{ }_{p} \varphi_{p-1}\left(\begin{array}{c}a_1, \ldots , a_p \\ b_1, \ldots , b_{p-1}\end{array} ; q, z\right):=\sum_{k=0}^{\infty} \frac{(a_1 ; q)_{k} \dots (a_p ; q)_{k}}{(b_1 ; q)_{k}  \dots (b_{p-1} ; q)_{k}}  \frac{z^{k}}{(q ; q)_{k}} \;,
\end{equation}
in particular, for $p=2$, we have
\begin{equation}
{ }_{2} \varphi_{1}\left(\begin{array}{c}a_1, a_2 \\ b_1\end{array} ; q, z\right)=\sum_{k=0}^{\infty} \frac{(a_1 ; q)_{k}(a_2 ; q)_{k}}{(b_1 ; q)_{k}} \frac{z^{k}}{(q ; q)_{k}} \;.
\end{equation}
Note that for
$a = q^{-n}$ and $n \in \mathbb{N}$, the series is a polynomial of degree $n$ with respect to the variable $b$.
Now define the $q$--Krawtchouk polynomial as in Chapter 14.14 of \cite{KLS}
\begin{equation*}
K_{n}\left(q^{-x} ; p, c ; q\right):={ }_{2} \varphi_{1}\left(\begin{array}{c}q^{-x}, q^{-n} \\ q^{-c}\end{array} ; q, p q^{n+1}\right).
\end{equation*}
These are orthogonal in the following sense: when $pq^c>1$ and $c \in \mathbb{N}$, 
\begin{multline*}
\sum_{x=0}^{c} \frac{(p q ; q)_{c-x}(-1)^{c-x}}{(q ; q)_{x}(q ; q)_{c-x}} q^{\left(\frac{x}{2}\right)} \cdot K_{m}\left(q^{-x} ; p, c ; q\right) \cdot K_{n}\left(q^{-x} ; p, c ; q\right)\\
=\frac{(-1)^{n} p^{c}(q ; q)_{c-n}(q ; q)_{n}(p q ; q)_{n}}{\left((q ; q)_{c}\right)^{2}} \cdot 
q^{\binom{c+1}{  2} -\binom{n+1}{  2} +cn}
\cdot
\delta_{m, n}.
\end{multline*}

\subsection{Markov Duality} \label{markovduality}

\begin{definition}
Two Markov processes $\xi(t)$ and $\eta(t)$ on states spaces $\mathfrak{X}$ and $\mathfrak{Y}$, respectively, are \textit{dual} with respect to the function $D(\xi,\eta)$ on $\mathfrak{X} \times \mathfrak{Y}$ if
$$
\mathbb{E}_\xi[D(\xi(t),\eta)] = \mathbb{E}_\eta[D(\xi,\eta(t))] \text{ for all } t\geq 0.
$$
If, in addition, $\mathfrak{X}=\mathfrak{Y}$ and $\xi(t)=\eta(t)$ then $\xi(t)$ is \textit{self--dual} with respect to the function $D(\xi,\eta)$.
\end{definition}

If $\mathfrak{X}$ and $\mathfrak{Y}$ are discrete, an equivalent definition of duality is the intertwining relation
$$
L_\xi^T D = DL_\eta,
$$
where $L_\xi$ is the generator of $\xi(t)$ viewed as a matrix with rows and columns indexed by $\mathfrak{X}$, $L_\eta$ is the generator of $\eta(t)$ with rows and columns indexed by $\mathfrak{Y}$, and $D$ is the duality function viewed as a matrix with rows indexed by $\mathfrak{X}$ and columns indexed by $\mathfrak{Y}$. The superscript $ ^T$ indicates matrix transpose. Note that this is using the convention in mathematical physics, rather than probability, that a stochastic matrix has columns, rather than rows, summing to $1$.

Any reversible Markov process with countable state space  is self--dual with respect to the so--called ``cheap'' duality function given by the inverses of the reversible measures. More precisely, if $\xi(t)$ is reversible, then detailed balance reads
$$
L_\xi^T = V^{-1}L_\xi V,
$$
where $V$ is the diagonal matrix with entries $V(\xi,\eta) = \delta_{\xi,\eta} \pi(\xi)$, and $\pi(\xi)$ is a reversible measure. Then defining $D^{\text{ch}}=V^{-1}$, it is immediate that
$$
L_\xi^TD^{\text{ch}} = D^{\text{ch}}L_\xi.
$$

\begin{definition} \label{symm}
If $\xi(t)$ is a Markov process with generator $L_\xi$, then a \textit{symmetry} of $\xi(t)$ is an operator $S$ such that $L_\xi S=SL_\xi$.
\end{definition}
In the context of self-duality there is a one-to-one correspondence between a self-duality function and the symmetry of the Markov process.  Indeed, if $\xi(t)$ is a Markov process which is self-dual via $D^{\text{ch}}$ and with non-trivial  symmetry $S$, then the new function $D^{\text{new}} = SD^{\text{ch}}$ is a different self-duality function. To see this, note that 
\begin{align*}
L_\xi^TD^{\text{new}} = L_\xi^T SD^{\text{ch}} = SL_\xi^T D^{\text{ch}} = SD^{\text{ch}}L_\xi =D^{\text{new}}L_\xi.
\end{align*}
In this way, applying symmetries to the ``cheap'' duality function $D^{\text{ch}}$ produces more interesting, non--trivial self--duality functions. 

\subsection{The $n$--species ASEP$(q,\boldsymbol{\theta})$ and $q$--Hahn Boson} \label{sec2.4}
Let $\Lambda_L := \left \{ 1, \ldots, L \right \} $ be a finite one dimensional chain where all our processes evolves and let $\boldsymbol{\theta}=(\theta^x)_{ x \in \Lambda_L}$ be a family of positive integers. The $\theta^x$ will indicate the maximum number of particles that may occupy the site $x$. There are $n$ different species of particles, labeled from $0$ to $n-1$. A particle configuration will be denoted in bold by
$$
\boldsymbol{\xi} = (\xi^x_{i})_{1 \leq x \leq L,0\leq i \leq n},
$$
where $\xi^x_{i}$ denotes the number of $i^{th}$ species particles at site $x$, with $i=n$ denoting a hole. Note that $\xi^x_{0} + \ldots + \xi^x_{n} = \theta^x$ for all $x\in \Lambda_L$.
For any lattice site $x$, the notation $\boldsymbol\xi^x$ will denote
$$
\boldsymbol\xi^x = (\xi^x_0, \ldots,\xi^x_n),
$$
which describes the particles located at site $x$ in particle configuration $\boldsymbol{\xi}$.
While $\boldsymbol\xi_i$ will denote
$$
\boldsymbol\xi_i = (\xi^1_i, \ldots,\xi^L_i),
$$
which describes the particles of species $i$  in particle configuration $\boldsymbol{\xi}$.
In terms of jump rates, species with smaller labels are considered to be heavier or to have priority. The generator is explicitly given by 
\begin{align*}
 & \mathcal{L}_{x, x+1} f(\boldsymbol\xi)  \\ & = \sum_{0 \leq k<l \leq n}\left( q^{-1+2\xi^x_{[0,k-1]}}\left\{\xi_{k}^{x}\right\}_{q^{2}} \cdot q^{2\xi_{[l+1,n]}^{x+1}}\left\{\xi_{l}^{x+1}\right\}_{q^{2}}\left(f\left(\boldsymbol\xi_{k \leftrightarrow l}^{x \leftrightarrow x+1}\right)-f(\boldsymbol\xi)\right)\right.\\ &\left.+ q^{1+2\xi_{[0,l-1]}^{x}}\left\{\xi_{l}^{x}\right\}_{q^{2}} \cdot q^{2\xi_{[k+1,n]}^{x+1}}\left\{\xi_{k}^{x+1}\right\}_{q^{2}}\left(f\left(\boldsymbol\xi_{l \leftrightarrow k}^{x \leftrightarrow x+1}\right)-f(\boldsymbol\xi)\right)\right), 
\end{align*}
where where $\xi_{[m, k]}^{x}=\delta_{k\ge m}\sum_{j=m}^k\xi_j^x$ and 
 $\boldsymbol\xi^{x\leftrightarrow x+1}_{i \leftrightarrow j}$ denotes the particle configuration
obtained by switching a species $i$ particle at lattice site $x$ with a $j^{th}$ species particle
at lattice site $x +$1 (assuming that such a particle configuration exists). If this particle configuration does not exist, then set $\boldsymbol\xi^{x\leftrightarrow x +1}_{i \leftrightarrow j} = \boldsymbol\xi$.
A verbal description of the model can be found in \cite{Kuan-IMRN}, which introduced the model. A family of reversible measures is also given by 
\begin{equation*}
\mu_{\boldsymbol k}^{n}(\boldsymbol{\xi})=1_{\left\{N\left(\boldsymbol\xi_{i}\right)=k_{i}, 0 \leq i \leq n\right\}} \prod_{x \in \Lambda_L} \prod_{i=0}^{n} \frac{1}{\left[\xi_{i}^{x}\right]_{q}^{!}} q^{\left(\xi_{i}^{x}\right)^{2} / 2} \prod_{y<x} \prod_{i=0}^{n-1} q^{-2 \xi_{[0, i]}^{x} \xi_{i+1}^{y}} \;,
\end{equation*}
where $N\left(\boldsymbol\xi_{i}\right)$ is the total number of particles of species $i$ in $\boldsymbol\xi$.
When taking the limit where all $\theta^x\rightarrow \infty$, the multi--species ASEP$(q,\boldsymbol\theta)$ converges to a multi--species $q$-TAZRP, which was introduced in \cite{Take15}, generalizing the single--species $q$--TAZRP in \cite{SW98}. The jump rates are much simpler to describe, and are pictorially represented in Figure \ref{qTAZRPFig}.

\begin{figure}
\centering
    \includegraphics[height=6.5cm]{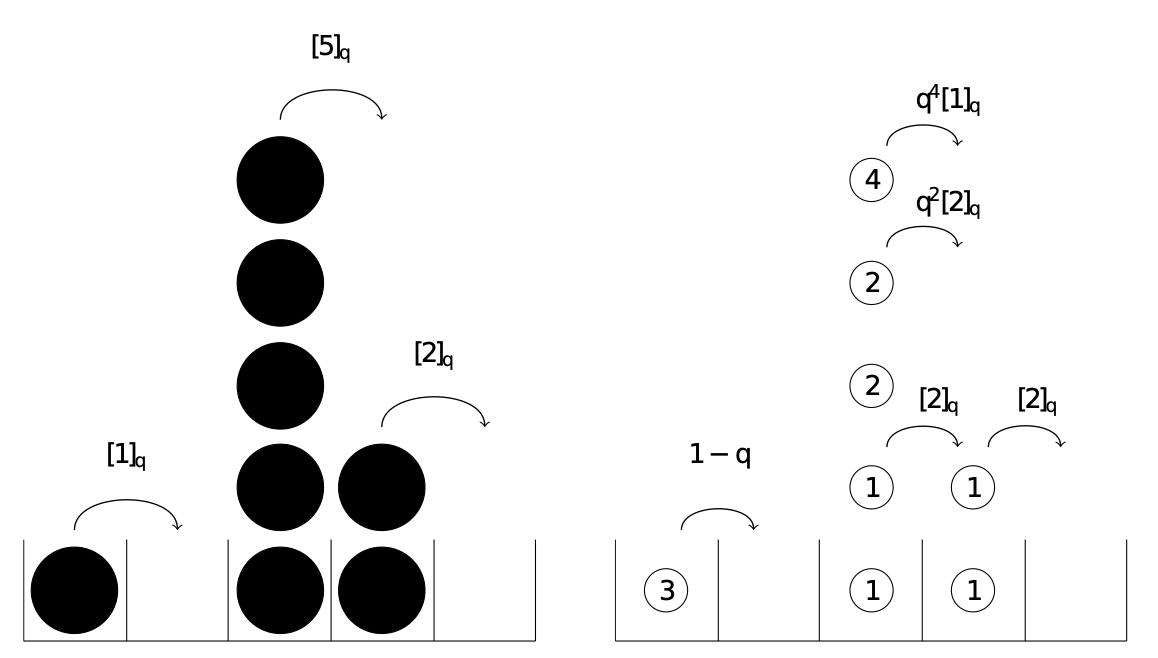}
    \caption{The jump rates for the multi--species $q$--TAZRP.}
    \label{qTAZRPFig}
\end{figure}

 Following \cite{KMMO}, Let $\boldsymbol\gamma , \boldsymbol\beta\in \mathbb{N}^n$ denote particle configurations at a single site,  define
$$
\Phi_{q}(\boldsymbol\gamma \mid \boldsymbol\beta ; \lambda, \mu)=q^{\chi_{\beta, \gamma}}\left(\frac{\mu}{\lambda}\right)^{\mid\gamma\mid} \frac{(\lambda ; q)_{\mid\gamma\mid}\left(\frac{\mu}{\lambda} ; q\right)_{\mid\beta\mid-\mid\gamma\mid}}{(\mu ; q)_{\mid\beta\mid}} \prod_{i=0}^{n} \binom{\beta_i}{\gamma_i}_q,
$$
where 
$$
\chi_{\beta, \gamma}=\sum_{0 \leq i<j \leq n}\left(\beta_{i}-\gamma_{i}\right) \gamma_{j},\qquad \mid\gamma\mid=\sum_{i=0}^n \gamma_i.
$$
A discrete--time multi--species $q$--Hahn TAZRP can be defined as follows:
given a particle configuration $\left(\boldsymbol\eta^x\right)$, the probability measure after the (discrete-time) update is described by
\begin{align*}
\mathbb{P}( \boldsymbol{k} & \text { particles at site } x) \\ & =\sum_{\substack{\boldsymbol\gamma^{x-1} \geq 0 \\ \boldsymbol\gamma^x \geq 0}} 1_{\left\{\boldsymbol\eta^x-\boldsymbol\gamma^x+\boldsymbol\gamma^{x-1}=\boldsymbol{k}\right\}} \Phi_q\left(\boldsymbol\gamma^x \mid \boldsymbol\eta^x\right) \Phi_q\left(\boldsymbol\gamma^{x-1} \mid \boldsymbol\eta^{x-1}\right) \text {. }
\end{align*}
 In this case, we will say that the process evolves with total asymmetry to the right, and the process evolves with total asymmetry to the left  if the probability measure after the update is given by
\begin{align*}
\mathbb{P}( \boldsymbol{k} & \text { particles at site } x) \\ & = \sum_{\substack{\boldsymbol\gamma^{x+1} \geq 0 \\ \boldsymbol\gamma^x \geq 0}} 1_{\left\{\boldsymbol\eta^x-\boldsymbol\gamma^x+\boldsymbol\gamma^{x-1}=\boldsymbol{k}\right\}} \Phi_q\left(\boldsymbol\gamma^x \mid \boldsymbol\eta^x\right) \Phi_q\left(\boldsymbol\gamma^{x+1} \mid \boldsymbol\eta^{x+1}\right) \text {. }
\end{align*}
 By taking the derivative at $\lambda=1$, these can  be used to define a continuous--time multi--species $q$--Hahn TAZRP, which then specializes to the multi--species $q$--TAZRP. The single--species $q$--Hahn TAZRP, in both discrete and continuous time, was introduced in \cite{Corwin2015TheQB}, using formulas from \cite{Po}. The precise formulas for the generators can be found in section 2.3 of \cite{KuanCMP}.

As a direct consequence of the previous results for multi-species ASEP$(q,\boldsymbol{\theta})$ and $q$--Hahn Boson, we have the following result for the so--called stochastic higher--rank, higher--spin vertex models. This is because the algebraic construction of the duality also holds for this model. Below, we introduce informally the model and point to the most relevant bibliography.

\subsubsection{Stochastic higher--rank, higher--spin vertex models}
The stochastic multi--species, higher--rank vertex model is defined from a stochastic matrix $S(z)$ depending on a spectral parameter $z$; see  \cite{KMMO} for definitions and \cite{BoMa16} for an explicit formula. The matrix $S(z)$ is obtained from the (non--stochastic) $R$--matrix of $\mathcal{U}_q( A_n^{(1)})$ by a gauge transform. For the purposes of the present paper, the most relevant information is that the matrix depends on an asymmetry parameter $q$, a spectral parameter $z$, and two additional parameters $\lambda$ and $\mu$.

\begin{figure}
\centering
\includegraphics[]{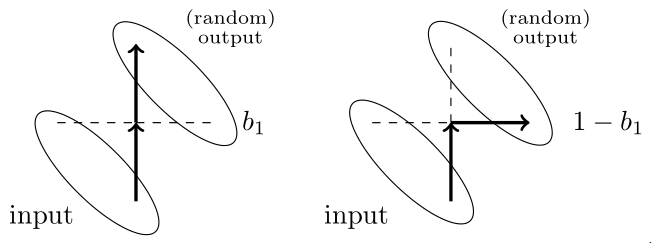}
\caption{The stochastic vertex weights.}
\label{VertexFig}
\end{figure}

The stochastic $S$--matrix defines a discrete--time Markov chain. The state space for this Markov chain is the same as the state space for the ASEP$(q,\boldsymbol{\theta})$. As in Figure \ref{FigureVertexModel}, one fixes boundaries at the left and bottom, and then updates from left to right. Thus, the arrows on the bottom are stochastically ``updated'' to arrows on the top. By viewing this update as one discrete--time update of a Markov chain, one can construct a discrete--time Markov chain. In this paper, we impose the boundary conditions in which no arrows enter from the left or exit from the right. See \cite{KuanCMP} for the mathematically precise definitions. 

\begin{figure}\label{ShowsUpdate}
\begin{center}
\includegraphics[height=5in]{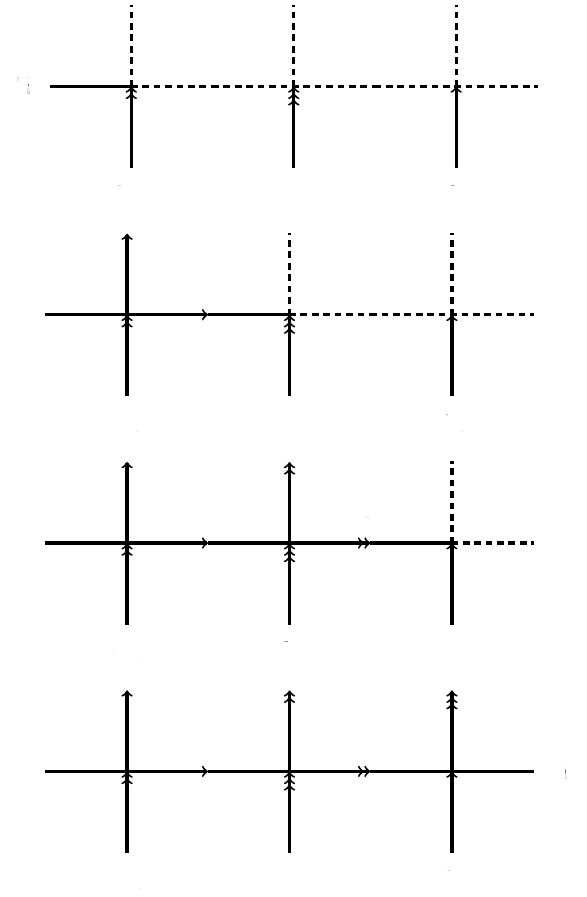}
\end{center}
\caption{This figure shows one update of the stochastic vertex model.}
\label{FigureVertexModel}
\end{figure}

\section{Main Results}\label{MR}
With all the notations and processes defined, the main results of the paper can be stated. The statements of the results are all purely probabilistic, even though some of the proofs are algebraic. 

\subsection{Orthogonal Polynomial Duality for the multi--species ASEP$(q,\boldsymbol{\theta})$}\label{OPD}
The first result regards self-duality for the multi-species ASEP$(q,\boldsymbol{\theta})$. In order to present it, we first need to recall and generalize the result for one species only.  
The function
\begin{align}\label{eq:4}
D_{\alpha_{i}}^{\boldsymbol{\theta}}\left(\boldsymbol\xi_{i},\boldsymbol\eta_{i}\right)=\prod_{x=1}^{L} K_{\eta_{i}^{x}}\left(q^{-2 \xi_{i}^{x}}, p_{i}^{x}\left(\boldsymbol\xi_{i}, \boldsymbol\eta_{i}\right), \theta^{x}, q^{2}\right) 
\\ \text{with} \ 
p_{i}^{x}\left(\boldsymbol\xi_{i}, \boldsymbol\eta_{i}\right)=\alpha_{i}^{-1} q^{-2\left(N_{x-1}^{-}\left(\boldsymbol\xi_{i}\right)-N_{x+1}^{+}\left(\boldsymbol\eta_{i}\right)\right)+2 N_{x-1}^{-}(\boldsymbol{\theta})-1} \nonumber
\end{align}
generalizes the single species homogeneous case to be inhomogeneous.
Above
$\displaystyle{N_{x-1}^{-}\left(\boldsymbol\xi_{i}\right)=\sum_{1\le y\le x-1} \xi_i^y} $ and $\displaystyle{N_{x+1}^{+}\left(\boldsymbol\eta_{i}\right)=\sum_{x+1\le y\le L}\eta_i^y }$ denotes the number of particles in the configuration considered at the left, respectively right, of  site $x$.
 Indeed, this $D_{\alpha}^{\boldsymbol{\theta}}$ with homogeneous $\boldsymbol\theta$ is up to a constant the orthogonal self--duality function for ASEP$(q,\theta)$, which was proven in Theorem 3.2 of \cite{ CFG21}.

 Below, in the section of proofs, we will see that this is the orthogonal self--duality function in the inhomogeneous case as well.
\begin{remark}

The choice of introducing a maximal occupancy $\theta^x \in \mathbb{N}$ to each site $x \in \Lambda_L$ can be interpreted as a random environment for the exclusion process. Here the maximal occupancy is not fixed and can vary among sites as the microscopic dynamics occur.  
Interacting particle systems in random environment have been considered extensively in the context of scaling limit where one is interested in the macroscopic effects of the environment. From a duality outlook, we mention the following result \cite{FRS}, where the authors extend the orthogonal polynomial dualities for symmetric particle systems in a general graph where each vertex allows an arbitrary number of particles with an open boundary.
\end{remark} 
\noindent
Now proceed to the $n$--species case. Given two particle configurations $\boldsymbol\xi$ and $\boldsymbol\eta$ of the $n$--species ASEP$(q,\boldsymbol\theta)$ on $L$ lattice sites, define intermediate particle configurations $\boldsymbol\zeta^{(i)}$ by 
$$
\boldsymbol\zeta^{(i)}_k =
\begin{cases}
\boldsymbol\xi_{k}, \quad \text{ if } k<i, \\
\boldsymbol\eta_{k}, \quad \text{ if } k>i.
\end{cases}
$$
The value of $\boldsymbol\zeta^{(i)}_i$ is uniquely determined by $\sum_{k=0}^n \boldsymbol\zeta^{(i)}_k  = \boldsymbol\theta$, i.e. $\boldsymbol\zeta^{(i)}_i=\boldsymbol\eta_{[0,i]}-\boldsymbol\xi_{[0,i-1]} $, where $\boldsymbol\eta_{[0,i]}$ is the vector $\sum_{j=0}^i\boldsymbol\eta_j=(\sum_{j=0}^i\boldsymbol\eta_j^x)_{\{x\in  \Lambda_L\}}$, to emphasis notations we are using, we recall that $\eta_{[0,i]}^x$ is the number $\sum_{j=0}^i\eta_j^x$.
\newline
As an example,
\begin{align*}
  \boldsymbol\eta &= (\boldsymbol\eta_{0}, \boldsymbol\eta_1, \ldots, \boldsymbol\eta_{n-1}, \boldsymbol\eta_{n}),\\
\boldsymbol\zeta^{(1)} &= (\boldsymbol\xi_{0}, \boldsymbol\eta_1+\boldsymbol\eta_0-\boldsymbol\xi_0, \ldots, \boldsymbol\eta_{n-1}, \boldsymbol\eta_{n}),\\
\ldots &= \ldots,
\\\boldsymbol\zeta^{(n-1)} &= (\boldsymbol\xi_{0}, \boldsymbol\xi_{1},\ldots,  \boldsymbol\xi_{n-1}+\boldsymbol\xi_{n}-\boldsymbol\eta_{n}, \boldsymbol\eta_{n}),
\\
  \boldsymbol\xi &= (\boldsymbol\xi_0, \boldsymbol\xi_1 \ldots, \boldsymbol\xi_{n-1},\boldsymbol\xi_{n}).
\end{align*}

 Additionally, define intermediate values of $\boldsymbol{\theta}^{(i)}$ by 
$$
\boldsymbol{\theta}^{(i)} = \boldsymbol\zeta^{(i)}_i +  \boldsymbol\zeta^{(i)}_{i+1}=\boldsymbol\eta_{[0,i+1]}-\boldsymbol\xi_{[0,i-1]}. 
$$

Equivalently,
\begin{multline*}
 \boldsymbol{\theta}^{(0)} = \boldsymbol\eta_0 + \boldsymbol\eta_1, \boldsymbol{\theta}^{(1)} = \boldsymbol\eta_1 + \boldsymbol\eta_2  - \boldsymbol\xi_0 +  \boldsymbol\eta_0 , \quad \ldots\\ \boldsymbol{\theta}^{(n-2)} =\boldsymbol\xi_{n-2}+\boldsymbol\xi_{n-1}-\boldsymbol\eta_{n-1} + \boldsymbol\xi_{n-3}  ,  \quad \boldsymbol{\theta}^{(n-1)} = \boldsymbol\xi_{n-1} + \boldsymbol\xi_{n}.   
\end{multline*}

 For the multi--species case, define
$$
\mathcal{D}^{\boldsymbol\theta}_{\boldsymbol{\alpha}}(\boldsymbol\xi, \boldsymbol\eta) =\mathcal{G}_{\boldsymbol{\alpha}}(\boldsymbol\xi,\boldsymbol\eta)\prod_{i=0}^{n-1}D^{  \boldsymbol{\theta}^{(i)} }_{\alpha_i}(\boldsymbol\xi_i, \boldsymbol\zeta^{(i)}_i)
 ,
$$
where 
\begin{equation} \label{curlyG}
\mathcal{G}_{\boldsymbol{\alpha}}(\boldsymbol\xi,\boldsymbol\eta) = \sqrt{ \frac{ \prod_{i=0}^{n-1}\mu_{\alpha_i}^{\boldsymbol{\theta}^{(i)}}(\boldsymbol\xi_i)\mu_{\alpha_i}^{\boldsymbol{\theta}^{(i)}}(\boldsymbol\zeta^{(i)}_i)  }{\mu^{n}(\boldsymbol\xi) \mu^{n}(\boldsymbol\eta) }},
\end{equation}
$$
\mu_{\alpha_i}^{\boldsymbol{\theta}}(\boldsymbol\xi_i):=\prod_{x=1}^{L} \alpha_i^{\xi^{x}_i}\left(\begin{array}{l}\theta^{x} \\ \xi^{x}_i\end{array}\right)_{q} q^{-\left(2 N_{x-1}^{-}(\boldsymbol{\theta})+\theta^{x}\right) \xi_i^{x}}
$$
is a reversible measure for the single--species ASEP$(q,\boldsymbol\theta)$, and 
$$
\mathcal{C}_{\boldsymbol\alpha}(\boldsymbol\xi,\boldsymbol\eta):=\sqrt{\prod_{i=0}^{n-1}q^{2\binom{N(\boldsymbol\xi_i)}{2}-2\binom{N(\boldsymbol\zeta^{(i)}_i)}{2}}\frac{\left(\alpha_i q^{1-2N(\boldsymbol\eta_{[0,i+1]}-\boldsymbol\xi_{[0,i]})} ; q^2\right)_\infty}{\left(\alpha_iq^{1-2N(\boldsymbol\eta_{[0,i]}-\boldsymbol\xi_{[0,i-1]})} ; q^2\right)_\infty}}.
$$

Note that $\mathcal{C}_{\boldsymbol{\alpha}}(\boldsymbol\xi,\boldsymbol\eta)$ is constant with respect to the dynamics, but is needed for the orthogonality below.

The first main theorem is the following and it states that $\mathcal{D}_{\boldsymbol{\alpha}}^{\boldsymbol\theta} (\boldsymbol\xi,\boldsymbol\eta)$ is an orthogonal self-duality function for the multi-species exclusion process. 
\begin{theorem}\label{FirstThm}

(a) The multi--species ASEP$(q,\boldsymbol\theta)$ is self--dual with respect to the function $\mathcal{D}_{\boldsymbol{\alpha}}^{\boldsymbol\theta} (\boldsymbol\xi,\boldsymbol\eta)$. 

(b)  Let $(a_{\boldsymbol k})$ be a family of non--negative  real numbers which sum to $1$. Then
$$
\mu^n(\boldsymbol\eta):=\sum_{\boldsymbol k} a_{\boldsymbol k} \mu^n_{ \boldsymbol k}(\boldsymbol \eta)
$$
is a reversible measure for the multi--species ASEP($q,\boldsymbol\theta$), and furthermore the duality functions from part (a) are orthogonal  with respect to these reversible measures in the sense that
$$
\sum_{\boldsymbol\xi} \mu^{n}_{}(\boldsymbol\xi) \mathcal{C}_{\boldsymbol{\alpha}}(\boldsymbol\xi,\boldsymbol\eta)\mathcal{C}_{\boldsymbol{\alpha}}(\boldsymbol\xi,\bar{\boldsymbol\eta}) \mathcal{D}_{\boldsymbol{\alpha}}^{\boldsymbol\theta}(\boldsymbol\xi,\boldsymbol\eta)\mathcal{D}_{\boldsymbol{\alpha}}^{\boldsymbol\theta}(\boldsymbol\xi,\bar{\boldsymbol\eta}) = 
\begin{cases}
\frac{1}{\mu^{n}(\boldsymbol\eta)}, \quad \boldsymbol\eta = \bar{\boldsymbol\eta}, \\
0, \quad \boldsymbol\eta \neq \bar{\boldsymbol\eta}.
\end{cases}
$$

\end{theorem}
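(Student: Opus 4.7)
The plan is to derive both parts simultaneously by exploiting the general symmetry-to-duality correspondence recalled in subsection \ref{markovduality}, applied to \emph{unitary} symmetries built from the $*$-bialgebra structure of $\mathcal{U}_q(\mathfrak{gl}_{n+1})$. First I would identify the generator $\mathcal{L}_{x,x+1}$ of ASEP$(q,\bs\theta)$ on the nearest-neighbor edge with (a scalar multiple, plus diagonal terms, of) the coproduct action of $C_1$ on $V^{(n)}_{\theta^x}\otimes V^{(n)}_{\theta^{x+1}}$, using the explicit formulas (\ref{Act}). The cheap self-duality $D^{\mathrm{ch}}$ is then the diagonal of inverse reversible measures, so any symmetry $S$ gives a new self-duality $SD^{\mathrm{ch}}$, and the strategy is to produce $\mathcal{D}^{\bs\theta}_{\bs\alpha}$ in this way from a carefully chosen $S$.

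Second, for each $0\le i\le n-1$ I would construct a candidate unitary symmetry $\Sigma_i$ as a $q$-exponential combination of the root vector $E_{i,i+1}$ and its $*$-adjoint $E_{i,i+1}^* = E_{i+1,i} q^{E_{ii}-E_{i+1,i+1}}$; concretely $\Sigma_i$ should have the shape $\mathcal{E}_{q^2}(\alpha_i\cdot\text{lowering})\,e_{q^2}(-\alpha_i\cdot\text{raising})$, chosen so that (i) $\Sigma_i$ lies in the commutant of $C_1$ (ensuring commutation with $\mathcal{L}$), and (ii) $\Sigma_i^*\Sigma_i = 1$ via (\ref{Inverses}) and the second $q$-BCH identity (\ref{SecondEq}). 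Centrality will be verified by reducing to the $\mathfrak{sl}_2$ subalgebra generated by $E_{i,i+1}, E_{i+1,i}, q^{E_{ii}-E_{i+1,i+1}}$, where the single-species computation of \cite{CFG21} applies verbatim. Setting $\Sigma=\Sigma_0\Sigma_1\cdots\Sigma_{n-1}$, part (a) reduces to checking that the matrix entries of $\Sigma D^{\mathrm{ch}}$ equal $\mathcal{D}^{\bs\theta}_{\bs\alpha}$. The nested form with intermediate $\bs\zeta^{(i)}$ is expected to appear naturally: after applying $\Sigma_0$ the species-$0$ configuration is frozen to $\bs\xi_0$, reducing the dynamics on species $\{1,\ldots,n\}$ to an $(n-1)$-species problem with effective occupancies $\bs\theta^{(1)}$, and so on. At each level the $(i,i+1)$ sector looks like a single-species ASEP$(q,\bs\theta^{(i)})$, and the $q$-Krawtchouk factor $D^{\bs\theta^{(i)}}_{\alpha_i}$ is imported directly from Theorem 3.2 of \cite{CFG21}; the square-root prefactor $\mathcal{G}_{\bs\alpha}$ absorbs the mismatch between the product of single-species measures $\mu^{\bs\theta^{(i)}}_{\alpha_i}$ and the joint $\mu^n$.

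For part (b), reversibility of $\mu^n(\bs\eta) = \sum_{\bs k} a_{\bs k}\mu^n_{\bs k}(\bs\eta)$ is immediate, since each $\mu^n_{\bs k}$ is reversible on its conservation sector and the dynamics preserves the species counts $N(\bs\xi_i)$. The orthogonality statement is the direct payoff of unitarity: with the correct choice of the constant $\mathcal{C}_{\bs\alpha}$, the left-hand side rewrites, after multiplication by $\mu^n(\bs\eta)$, as the inner product $\langle \Sigma v_{\bs\eta}, \Sigma v_{\bar{\bs\eta}}\rangle$ in the $*$-representation structure on $V^{(n)}_{\theta^1}\otimes\cdots\otimes V^{(n)}_{\theta^L}$. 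Unitarity of $\Sigma$ collapses this to $\langle v_{\bs\eta},v_{\bar{\bs\eta}}\rangle = \delta_{\bs\eta,\bar{\bs\eta}}$, which gives the claimed formula. The explicit $q$-Pochhammer ratios inside $\mathcal{C}_{\bs\alpha}$ will arise from the normalization constants in the $q$-Krawtchouk orthogonality relation quoted above, accumulated over the levels $i=0,\ldots,n-1$.

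The main obstacle is the explicit factorization identifying $\Sigma D^{\mathrm{ch}}$ with the nested product defining $\mathcal{D}^{\bs\theta}_{\bs\alpha}$. This requires iterated $q$-BCH manipulations involving the non-simple root vectors $E_{ij}$, which no longer sit inside a single $\mathfrak{sl}_2$ and obey more intricate $q$-commutation relations; care is needed to show that the effective occupancies seen by the $\Sigma_i$ at each step are precisely $\bs\theta^{(i)}$ and not something else. Tracking the normalizing constant $\mathcal{C}_{\bs\alpha}$ precisely, so that the unitarity identity and the single-species orthogonality align, is the second delicate piece; I expect the $q$-commutation identity (\ref{EFactor}) and the inversion (\ref{Inverses}) to do most of the bookkeeping.
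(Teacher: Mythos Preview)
Your proposal is correct in spirit and follows essentially the same algebraic route as the paper: construct the unitary symmetry from $q$-exponentials of root vectors, apply the general symmetry-to-duality mechanism to get (a), and read off orthogonality (b) from unitarity in the $*$-representation. A couple of points are worth sharpening. First, your remark that $\Sigma_i$ must ``lie in the commutant of $C_1$'' is vacuous, since $C_1$ is central; the commutation with $\mathcal{L}$ comes for free from the general setup $L=G^{-1}HG$, $\tilde S=G^{-1}SG$, not from any special property of $\Sigma_i$. Second, the actual unitary in the paper has four factors (two diagonal $q$-exponentials flanking the raising/lowering pair), not two; your two-factor ansatz will not be unitary on the nose, and the proof of unitarity (the paper's Lemma on $U(\lambda)$) is a genuine $q$-BCH calculation, not just an appeal to (\ref{Inverses}).

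Where the paper differs from your plan is in the handling of the explicit constants $\mathcal{G}_{\bs\alpha}$ and $\mathcal{C}_{\bs\alpha}$. The paper does \emph{not} extract these purely from the algebraic picture. Instead it first gives a direct special-functions proof of (b): it proves a single-species inhomogeneous $q$-Krawtchouk orthogonality lemma, then sums iteratively over $\bs\xi_0,\ldots,\bs\xi_{n-1}$ using the nested structure $\bs\zeta^{(i)}$, and reads off $\mathcal{C}_{\bs\alpha}$ by matching $\prod_i\sqrt{w_{\bs\theta^{(i)}}/h_{\bs\theta^{(i)}}}$ against the product of single-species reversible measures. Only afterwards does the paper invoke the algebraic orthogonality $D^T G^2B^{-2}D=G^{-2}B^2$ and identify $A^2G^{-2}B^2$ with $\mu^n$. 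So the two arguments are complementary: your purely algebraic route proves that \emph{some} duality function coming from $U(\lambda)$ is orthogonal, but pinning down the precise form of $\mathcal{C}_{\bs\alpha}$ in the statement requires either the direct computation or a careful bookkeeping of the diagonal $e_{q^2}^{1/2},\mathcal{E}_{q^2}^{1/2}$ factors in $U(\lambda)$, which you have omitted.
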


This theorem generalizes a few previously known results.
\begin{itemize}
\item
In the single--species case, when $n=1$, the results of \cite{CFG21} are recovered, namely an inhomogeneous nested product of $q$-Krawtchouk polynomials.

\item
In the symmetric case, when $q\xrightarrow[]{}1$, the duality function reduces to a special case of the homogeneous product of multivariate Krawtchouk polynomials \cite{ZZ}, using the definition of  multivariate Krawtchouk polynomials from Appendix 2 in \cite{Milch}.

\item
If the two conditions above are considered, i.e. setting $n=1$ and taking the $q\xrightarrow[]{}1$ limit, one gets an homogeneous product of Krawtchouk polynomials as self-duality function, see for example \cite{Gro19} and \cite{FG}, where orthogonal polynomials appeared for the first time as duality function.
\end{itemize}
       
\begin{remark}[From orthogonal dualities to triangular ones]
 One might wonder if there is a link between the orthogonal duality function of Theorem \ref{FirstThm} and the triangular one of Theorem 2.5 (a) in \cite{Kuan-IMRN}, which are both duality function for the multi--species ASEP$(q,\boldsymbol{\theta})$. For one specie this link has been shown in Remark 6.3 of \cite{CFG21}. Here, in the same spirit of rank $1$, we show that the triangular duality can be recovered after taking an appropriate limit. 
    Notice that  
\begin{align*}
    \lim_{\alpha_i\xrightarrow[]{}0}&(-\alpha_i)^{\xi_i^x}K_{\eta_{i}^{x}}\left(q^{-2 \xi_{i}^{x}}, p_{i}^{x}\left(\boldsymbol\xi_{i}, \boldsymbol\eta_{i}\right), \theta^{x}, q^{2}\right)\\
   & =1_{\{\eta_i^x\ge \xi_i^x\}} \frac{\left(q^{-2\eta_i^x};q^2\right)_{\xi_i^x}}{\left(q^{-2\theta^x};q^2\right)_{\xi_i^x}}q^{-2\xi_{i}^{x}\left(N_{x-1}^{-}\left(\boldsymbol\xi_{i}\right)-N_{x}^{+}\left(\boldsymbol\eta_{i}\right)\right)+2 \xi_{i}^{x}N_{x-1}^{-}(\boldsymbol{\theta})-\left(\xi_{i}^{x}\right)^2}.
\end{align*}
Thus 
\begin{align*}
  &  \lim_{\alpha_i\xrightarrow[]{}0,0\le i\le n-1}\mathcal{D}^{\boldsymbol\theta}_{\boldsymbol{\alpha}}(\boldsymbol\xi, \boldsymbol\eta) \prod_{i=0}^{n-1}\frac{(-\alpha_i)^{N(\boldsymbol\xi_i)}}{\alpha_i^{\frac{1}{2}\left(N(\boldsymbol\xi_i+\boldsymbol\zeta^{(i)}_i)\right)}}\\
   & =\frac{\mathcal{G}_{\boldsymbol{\alpha}}(\boldsymbol\xi, \boldsymbol\eta) }{\prod_{i=0}^{n-1}\alpha_i^{\frac{1}{2}\left(N(\boldsymbol\xi_i+\boldsymbol\zeta^{(i)}_i)\right)}} \prod_{i=0}^{n-1}\prod_{x\in \Lambda_L} 1_{\{\eta_{[0,i]}^x\ge\xi_{[0,i]}^x\}}\frac{\left(q^{-2\left(\eta^x_{[0,i]}-\xi^x_{[0,i-1]}\right)};q^2\right)_{\xi_i^x}}{\left(q^{-2\left(\eta^x_{[0,i+1]}-\xi^x_{[0,i-1]}\right)};q^2\right)_{\xi_i^x}}\\
  &\quad\times q^{2\xi_{i}^{x}N_{x}^{+}\left(\boldsymbol\eta_{[0,i]}-\boldsymbol\xi_{[0,i-1]}\right)+2 \xi_{i}^{x}N_{x-1}^{-}(\boldsymbol\eta_{[0,i+1]}-\boldsymbol\xi_{[0,i-1]})-N\left(\boldsymbol\xi_{i}\right)^2}\\
  &=C(N(\boldsymbol\xi_i),N(\boldsymbol\eta_i),\boldsymbol\theta) \prod_{x \in \Lambda_L}\left[\eta_0^x\right]_q^{!} \prod_{i=0}^{n-1} 1_{\eta_{[0, i]}^x \geq \xi_{[0, i]}^x} \frac{\left[\eta_{[0, i+1]}^x-\xi_{[0, i]}^x\right]_q^{!}}{\left[\eta_{[0, i]}^x-\xi_{[0, i]}^x\right]_q^{!}}\\
  &\quad\times q^{\xi_i^x\left(2N_{x-1}^-(\boldsymbol{\theta})+\theta^x+2N_{x+1}^+(\boldsymbol{\eta}_{[0,i]})+\eta^x_{[0,i]}\right)},
\end{align*}
where 
\begin{align*}
C(N(\boldsymbol\xi_i),N(\boldsymbol\eta_i),\boldsymbol\theta)&= -\frac{1}{2}\sum_{x\in\Lambda_L}\left(\theta^x\right)^2\\
&+\sum_{i=0}^{n-1}\left(N(\boldsymbol\xi_i)\left(N(\boldsymbol\eta_{[0,i+1]})-2N(\boldsymbol\xi_{[0,i-1]})-\frac{3}{2}N(\boldsymbol\xi_i)\right)\right.\\
&\left.-N(\boldsymbol\eta_{[0,i+1]}-\boldsymbol\xi_{[0,i-1]})^2+N(\boldsymbol\eta_{i+1})N(\boldsymbol\eta_{[0,i+1]})-\frac{1}{2}N(\boldsymbol\eta_{i+1})\right).
\end{align*}
This function is, up to a constant, equal to the triangular duality given by Theorem 2.5 (a) in \cite{Kuan-IMRN} when $\theta^x=2j$.
\end{remark}

\subsection{Example}
Before proceeding with the next results we look at three different concrete examples where the maximum number of particles for each sites is fixed and equal to $2$.
 \begin{example} Suppose $n=2$, then the self--duality function is 
\begin{multline*}
     \mathcal{D}(\boldsymbol\xi, \boldsymbol\eta)=D_{\alpha_{0}}^{\left(\boldsymbol\eta_{0}+\boldsymbol\eta_{1}\right)}\left(\boldsymbol\xi_{0}, \boldsymbol\eta_{0}\right)  D_{\alpha_{1}}^{\left(\boldsymbol\theta-\boldsymbol\xi_{1}\right)}\left(\boldsymbol\xi_{1}, \boldsymbol\eta_{0}+\boldsymbol\eta_{1}-\boldsymbol\xi_{1}\right)\\ \sqrt{\frac{\mu^{(\boldsymbol\eta_0+\boldsymbol\eta_1)}_{\alpha_0}(\boldsymbol\xi_0)\mu^{(\boldsymbol\eta_0+\boldsymbol\eta_1)}_{\alpha_0}(\boldsymbol\eta_0)\mu^{(\boldsymbol\theta-\boldsymbol\xi_0)}_{\alpha_1}(\boldsymbol\xi_1)\mu^{(\boldsymbol\theta-\boldsymbol\xi_0)}_{\alpha_1}(\boldsymbol\eta_0+\boldsymbol\eta_1-\boldsymbol\xi_0)}{\mu^3(\boldsymbol\xi) \mu^3(\boldsymbol\eta)}}.
\end{multline*}

For $\theta^x\equiv 2,L=2$ consider the 
four particle configurations

$$
\begin{array}{llllllllllll}0 & 2 & & 2 & 2 & &  2 & 2 & & 2 & 0 \\ \underline{1} & \underline{2} && \underline{1} & \underline{0} && \underline{0} & \underline{1} && \underline{2} & \underline{1}\end{array}
$$
where $2$ denotes a hole.
Using these 
four particle configurations as an ordered basis, the duality 
function is expressed as the matrix
\scriptsize
\begin{equation*}
 \mathcal{D}=   \left( \scalemath{0.6}{\begin{array}{cccc}-q^{3}

(q^2 + 1)(\alpha_1 - q)C_0/q^8  &          \alpha_1(q^2 + 1)(\alpha_0 - q)/q^6                 &                     \alpha_0\alpha_1(q + 1/q)/q^5                              &                                                 0\\
 \alpha_1(q^2 + 1)C_0/q^8       &  (\alpha_0 - q)(- q^5 + \alpha_1q^2 + \alpha_1)/q^6  &            \alpha_0(- q^5 + \alpha_1q^2 + \alpha_1)/q^6                    &                                                  0\\
  0                                                                                         &        \alpha_0(- q^3 + \alpha_1q^2 + \alpha_1)/q^4        &  (- q^3 + \alpha_0)(- q^3 + \alpha_1q^2 + \alpha_1)/q^4           &           \alpha_1(q^2 + 1)C_0/q^4\\
   0                                                                                        &                 \alpha_0\alpha_1(q + 1/q)/q^3                  &           \alpha_1(q^2 + 1)(- q^3 + \alpha_0)/q^4                             &  (q^2 + 1)(- q^5 + \alpha_1)C_0/q^4
 \end{array}}\right),
\end{equation*}
where $C_0$ is the constant $(- q^3 +\alpha_0q^2 + \alpha_0)$. With respect to the same basis, the generator is 
$$
\mathcal{L}=\left(\begin{array}{cccc}-q^{3}\left(q+q^{-1}\right)^{2} & q^{3}\left(1+q^{2}\right) & q\left(1+q^{2}\right) & 0 \\ q^{7} & -\left(q+q^{3}+q^{7}\right) & q^{3} & q \\ q^{7} & q^{5} & -\left(q+q^{5}+q^{7}\right) & q \\ 0 & q^{5}\left(1+q^{2}\right) & q^{3}\left(1+q^{2}\right) & -q^{5}\left(q+q^{-1}\right)^{2}\end{array}\right).$$
It follows directly that the duality holds.
\end{example}

\begin{example}

Suppose $L=3,n=2$, $\theta^x\equiv 2$, Let $X_t,Y_t$ be two multi-species ASEP with initial condition $x_1,y_1$ as 
following.
\begin{equation*}
 x_1=    \begin{matrix}
    2 & 2 & 2\\
   \underline{0}  & \underline{2} &\underline{2}
    \end{matrix}
\ \ \ \     y_1=    \begin{matrix}
    2 & 2 & 2\\
   \underline{1}  & \underline{2} &\underline{2}
    \end{matrix}
\end{equation*}

\begin{equation*}
 x_2=    \begin{matrix}
    2 & 2 & 2\\
   \underline{2}  & \underline{0} &\underline{2}
    \end{matrix}
\ \ \ \     y_2=    \begin{matrix}
    2 & 2 & 2\\
   \underline{2}  & \underline{1} &\underline{2}
    \end{matrix}
\ \ \ \        x_3=    \begin{matrix}
    2 & 2 & 2\\
   \underline{2}  & \underline{2} &\underline{0}
    \end{matrix}
\ \ \ \     y_3=    \begin{matrix}
    2 & 2 & 2\\
   \underline{2}  & \underline{2} &\underline{1}
    \end{matrix}
\end{equation*}

Then $X_t,Y_t$ reduce to single particle ASEP with the same jumping rates. 

\begin{equation*}
    \mathbb{E}_{x_1}[D(X_t,y_1)]=\sum_{i=0}^2 \mathbb{P}_{x_1}(X_t=x_i)D(x_i,y_1).
\end{equation*}
Note  that  when $i\neq 1$, $D(x_i,y_1)=D(x_1,y_i)=0$, thus the duality relation holds.
\end{example}
\begin{example}
Suppose $L=3,n=2$, $\theta^x \equiv 2$, Let $X_t,Y_t$ be two multi-species ASEP with initial condition $x_1,y_1$ as following. Then the evolution of species $1$ particle in both process is the same. 

\begin{equation*}
 x_1=    \begin{matrix}
    2 & 2 & 2\\
   \underline{0}  & \underline{2} &\underline{2}
    \end{matrix}  
\ \ \ \     y_1=    \begin{matrix}
    1 & 1 & 1\\
   \underline{0}  & \underline{1} &\underline{1}
    \end{matrix}
\end{equation*}

\begin{equation*}
 x_2=    \begin{matrix}
    2 & 2 & 2\\
   \underline{2}  & \underline{0} &\underline{2}
    \end{matrix}
\ \ \ \     y_2=    \begin{matrix}
    1 & 1 & 1\\
   \underline{1}  & \underline{0} &\underline{1}
    \end{matrix}
\ \ \ \        x_3=    \begin{matrix}
    2 & 2 & 2\\
   \underline{2}  & \underline{2} &\underline{0}
    \end{matrix}
\ \ \ \     y_3=    \begin{matrix}
    1 & 1 & 1\\
   \underline{1}  & \underline{1} &\underline{0}
    \end{matrix}
\end{equation*}
One can check that $D(x_i,y_1)=D(x_1,y_i)$ for any $i$. 
\newline 
By the fact that $\mathbb{P}_{x_1}(X_t=x_i)=\mathbb{P}_{y_1}(Y_t=y_i)$, the duality still holds. 
\end{example}

\subsection{Duality for the multi--species $q$--TAZRP}
The theorem is a duality result for the multi--species totally asymmetric zero range process. The so--called charge--parity transformation $T$ was introduced in \cite{Kuan-IMRN} in the multi--species case. It is an involution which satisfies the property that $TLT^{-1}=L^{\text{rev}}$, where $L$ is the generator of multi--species ASEP$(q, \boldsymbol\theta)$ and $L^{\text{rev}}$ is the space--reversed generator.

The idea is that on site $x \in \Lambda_L$ we replace the $i^{th}$ species with the $(n-i)^{th}$ one  for $0\le i\le n$, namely  $(T(\boldsymbol\xi))^x_i:=\xi^x_{n-i}.$ Once the charge--parity transformation is applied,  we let the number of particles per site be unbounded,  namely we consider the limit $\mathbf\theta^x\rightarrow \infty$, for all lattice sites $x \in \Lambda_L$, then the duality function $\mathcal{D}_{\boldsymbol{\alpha}}^{\boldsymbol\theta} (\boldsymbol\xi,\boldsymbol\eta)$ yields a new duality function for general $n$-species  $q$-TAZRP ($n\ge 2$).

\begin{theorem}\label{qTAZRP}
Let $T$ be the involution $(T(\boldsymbol\xi))^x_i=\xi^x_{n-i}$ for $1\le x\le L,0\le i\le n$. Then the function

\begin{align}
& D^{q-TARZP}(\boldsymbol\xi,\boldsymbol\eta)  \nonumber  \\ & = q^{h(\boldsymbol\xi,\boldsymbol\eta)} \prod_{i=0}^{n-1}\prod_{x \in \Lambda_L}{}_1\phi_0\left(q^{-2\xi_i^x};q^2,q^{-2\left(N_{x-1}^-(\boldsymbol\xi_i)+N_{x+1}^+(T(\boldsymbol\eta)_{i+1})\right)+1}\right)    \nonumber    \\
   &=  q^{h(\boldsymbol\xi,\boldsymbol\eta)} \prod_{i=0}^{n-1}\prod_{x \in \Lambda_L} ( q^{-2\left(N_{x}^-(\boldsymbol\xi_i)+N_{x+1}^+(T(\boldsymbol\eta)_{i+1})\right)+1} ;q^2  )_{\xi_i^x}   \label{dualityq-TARZP} 
\end{align}
is a space reversed duality for $n $-species $q$-TAZRP ($n\ge 1$), where $\boldsymbol\eta$ evolves with total asymmetry to the left and $\boldsymbol\xi$ evolves with total asymmetry to the right, and 
\begin{equation}\label{eq:1}
    h(\boldsymbol\xi,\boldsymbol\eta)=\sum_{x \in \Lambda_L}\sum_{i=0}^{n-1}\left(-\xi_i^xN_{x+1}^+(\boldsymbol\eta_{[0,n-2-i]})+\eta_i^xN_{x}^+(\boldsymbol\xi_{[0,n-2-i]})\right).
\end{equation}
\end{theorem}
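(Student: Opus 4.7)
The plan is to derive Theorem \ref{qTAZRP} from Theorem \ref{FirstThm} by combining the charge--parity transformation with the limit $\bs\theta \to \infty$. First, Theorem \ref{FirstThm}(a) gives the ASEP$(q,\bs\theta)$ self--duality via $\mathcal{D}^{\bs\theta}_{\bs\alpha}$. Because $TLT^{-1}=L^{\mathrm{rev}}$, the function $(\bs\xi,\bs\eta)\mapsto\mathcal{D}^{\bs\theta}_{\bs\alpha}(\bs\xi, T\bs\eta)$ is automatically a duality between the original ASEP$(q,\bs\theta)$ dynamics and its space--reversed version. This transition from a self--duality to a space--reversed duality is purely algebraic and uses only the involution $T$ defined just before the theorem.

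Next, I would take the componentwise limit $\bs\theta \to \infty$ together with a suitable rescaling $\bs\alpha\to\bs\alpha(\bs\theta)$. On the level of generators, this is the well--known ASEP$(q,\bs\theta)\to q$--TAZRP degeneration described in Section \ref{Definitions}. On the level of duality functions, the main analytical task is to identify the limit of the $q$--Krawtchouk factors
$$
K_{\zeta_i^x}\!\left(q^{-2\xi_i^x};\, p_i^x,\, \theta^{(i),x},\, q^2\right)
= {}_2\varphi_1\!\left(\genfrac{}{}{0pt}{}{q^{-2\xi_i^x},\, q^{-2\zeta_i^x}}{q^{-2\theta^{(i),x}}};\, q^2,\, p_i^x q^{2(\zeta_i^x+1)}\right).
$$
As $\theta^{(i),x}\to\infty$, the factor $(q^{-2\theta^{(i),x}};q^2)_k^{-1}$ is asymptotic, up to sign and a $q$--quadratic term, to $q^{2\theta^{(i),x} k}$, while $p_i^x$ carries a compensating factor $q^{2N^-_{x-1}(\bs\theta)}$. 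Choosing $\alpha_i=\alpha_i(\bs\theta)$ so that these divergent powers cancel, the ${}_2\varphi_1$ collapses termwise to a ${}_1\varphi_0$ whose $z$--argument reduces to $q^{-2(N^-_{x-1}(\bs\xi_i)+N^+_{x+1}(T(\bs\eta)_{i+1}))+1}$; by the $q$--binomial theorem this is exactly the truncated $q$--Pochhammer appearing in \eqref{dualityq-TARZP}.

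The third step is to track the normalizing prefactor $\mathcal{G}_{\bs\alpha}$. Its $\bs\theta$--dependence contributes both a divergence which must cancel against the scaling of $\bs\alpha$ and a finite residue, which, after using the explicit form of $\mu_{\alpha}^{\bs\theta}$ together with the intermediate configurations $\bs\zeta^{(i)}$ and the telescoping identity $\bs\theta^{(i)}=\bs\eta_{[0,i+1]}-\bs\xi_{[0,i-1]}$, should be identified with the factor $q^{h(\bs\xi,\bs\eta)}$ defined in \eqref{eq:1}. This is essentially a bookkeeping exercise with exponents of $q$; the only input beyond pure algebra is the explicit count of pairs $(\xi^x_i, \eta^y_j)$ that appears in $h$, which arises from the $q^{-2N^-_{x-1}(\bs\theta)\xi^x}$ term in $\mu^{\bs\theta}_\alpha$ after the rescaling of $\bs\alpha$ absorbs the diverging pieces.

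The main obstacle is precisely this limit bookkeeping: several factors in $\mathcal{D}^{\bs\theta}_{\bs\alpha}$ individually blow up or vanish as $\bs\theta\to\infty$, and the rescaling of $\bs\alpha$ must be arranged so that the divergences cancel exactly while the surviving pieces combine into the right--hand side of \eqref{dualityq-TARZP}. Once this is carried out, the duality for the $q$--TAZRP follows by passing to the limit in the ASEP$(q,\bs\theta)$ intertwining relation $L^T\mathcal{D}^{\bs\theta}_{\bs\alpha}(\cdot,T\bs\eta)=\mathcal{D}^{\bs\theta}_{\bs\alpha}(\cdot,T\bs\eta)L^{\mathrm{rev}}$, using convergence of both the generator and the duality function. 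Since the statement is claimed to hold for all $n\geq 1$, the $n=1$ case serves as a sanity check and is recovered by the same limit applied to the single--species duality of \cite{CFG21}.
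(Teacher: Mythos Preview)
Your proposal is correct and follows essentially the same route as the paper: apply the charge--parity involution $T$ to the second argument of $\mathcal{D}^{\bs\theta}_{\bs\alpha}$, then send $\bs\theta\to\infty$ with a tuned choice of $\bs\alpha$, and identify the limit as the product of ${}_1\varphi_0$'s times the prefactor $q^{h}$. The paper carries this out with the specific choice $\alpha_i=q^{2N(T(\bs\theta^{(i)}))}$ and, rather than absorbing all divergences into $\bs\alpha$, additionally divides by a factor $q^{f(\bs\theta,N(\bs\xi),N(\bs\eta))/2}$ that depends only on $\bs\theta$ and the conserved particle numbers (hence is harmless for the duality relation); the residual exponent $g(\bs\xi,\bs\eta)$ combines with the limit of the $q$--binomial ratios in $\mathcal{G}_{\bs\alpha}$ to produce exactly $h(\bs\xi,\bs\eta)$.
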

Note that when $\boldsymbol\eta$ consists of finitely many sites each occupied with infinitely many particles, and $\boldsymbol\xi$ consists of finitely many particles, the duality function becomes an indicator function.

\subsection{Stochastic higher--spin, higher--rank vertex models}
 In \cite{KuanCMP}, it was proved that after applying charge--parity symmetry of the previous section,  the triangular duality function for the multi--species ASEP$(q,\boldsymbol\theta)$ is also a duality function for the stochastic higher--spin, higher--rank vertex models. The argument used there immediately implies the same for the orthogonal polynomial duality function, as stated below.

\begin{theorem}\label{Vertex}
The stochastic higher--spin, higher--rank vertex model, with total asymmetry to the right, is dual to its space reversal with respect to the function
$$
\mathcal{D}_{\boldsymbol{\alpha}}^{\boldsymbol{\theta}}(\boldsymbol{\xi},T(\boldsymbol{\eta})).
$$

\end{theorem}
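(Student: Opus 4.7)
The plan is to reduce the claim to the argument already carried out in \cite{KuanCMP} for the triangular duality function, since that argument depends only on formal algebraic features of $\mathcal{D}$ rather than its explicit triangular shape. The essential point is that both the ASEP$(q,\bs\theta)$ generator $L$ and the vertex model transfer matrix $\mathcal{T}$ are constructed from the same $R$--matrix associated with $\mathcal{U}_q(\widehat{\mathfrak{gl}_{n+1}})$, acting on the same state space that decomposes into tensor products of the representations $V^{(n)}_{\theta^x}$ introduced in Section \ref{Definitions}.

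First I would recall the mechanism by which $\mathcal{D}^{\bs\theta}_{\bs\alpha}$ arises: by Theorem \ref{FirstThm}(a), it equals a unitary symmetry $S$ (produced from the $^*$--bialgebra structure) composed with the cheap duality function $D^{\text{ch}}$ built from the reversible measure $\mu^n$. Because $S$ is constructed entirely from quantum group elements that commute with the co--product action on the full tensor product, $S$ commutes not only with $L$ but also with the transfer matrix $\mathcal{T}$; this is exactly the principle used in \cite{KuanCMP} and ultimately rests on the Yang--Baxter equation together with the intertwining property of $S$ on each local factor.

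Next I would invoke the charge--parity symmetry. As recalled in the statement of Theorem \ref{qTAZRP}, the involution $T$ satisfies $T L T^{-1} = L^{\text{rev}}$, and the analogous identity $T \mathcal{T} T^{-1} = \mathcal{T}^{\text{rev}}$ for the vertex model is precisely what is established in \cite{KuanCMP}. Combining the self--duality of ASEP$(q,\bs\theta)$ via $\mathcal{D}_{\bs\alpha}^{\bs\theta}$ with the intertwining of $S$ and $\mathcal{T}$, then twisting one argument by $T$, yields the intertwining relation
\begin{equation*}
\mathcal{T}^T\, \mathcal{D}_{\bs\alpha}^{\bs\theta}(\cdot,T(\cdot)) = \mathcal{D}_{\bs\alpha}^{\bs\theta}(\cdot,T(\cdot))\, \mathcal{T}^{\text{rev}},
\end{equation*}
which by the equivalence in Section \ref{markovduality} is exactly the claimed duality.

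The one step that is not purely formal, and which I expect to be the main subtlety, is verifying that the commutation of $S$ with $\mathcal{T}$ survives the gauge transformation used to pass from the non--stochastic $R$--matrix of $\mathcal{U}_q(A_n^{(1)})$ to the stochastic $S(z)$ of Section \ref{SHV}. In \cite{KuanCMP} this gauge compatibility was checked for the particular symmetry producing the triangular duality; for the unitary symmetry $S$ produced here from the $^*$--structure, the same check goes through because the gauge factors are diagonal in the quantum number basis and hence commute with the Cartan part of $S$, while the off--diagonal part of $S$ enters only through commutators that are unaffected by the diagonal conjugation. Once this compatibility is in hand, no further computation is needed.
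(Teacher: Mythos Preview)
Your proposal is correct and follows essentially the same route as the paper: both reduce to the argument of \cite{KuanCMP}, observing that the only change is the choice of quantum group element (the unitary symmetry $S$ here, in place of the one producing the triangular function), and that the intertwining machinery of \cite{KuanCMP} is insensitive to which $u\in\mathcal{U}_q(\mathfrak{gl}_{n+1})$ is used. The paper phrases the key input as the local relation $S(z)\Delta(u)=\Delta(u)S^{\text{rev}}(z)$ with $S^{\text{rev}}(z)=P\circ S(z)$, which already holds for the \emph{stochastic} $S$--matrix and for arbitrary $u$; this makes your final paragraph about gauge compatibility unnecessary, since that compatibility is built into the cited relation rather than requiring a separate check for the new symmetry.
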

Note that taking the $\boldsymbol\theta\rightarrow\infty$ limit recovers Theorem \ref{qTAZRP}.

\subsection{Orthogonal Polynomial Duality for the multi--species $q$--Hahn TAZRP}
In \cite{KuanCMP}, it was shown that the triangular duality for the multi--species $q$--Boson was also a duality for the multi--species $q$--Hahn Boson. The proof proceeded by direct computation. As such, it is natural to check if the orthogonal duality for the multi--species $q$--Boson is also a duality for the multi--species $q$--Hahn Boson. Indeed, this turns out to be the case:
\begin{theorem}\label{qHahn}
The multi--species $q$--Hahn TAZRP (with drift to the right) is dual to its space reversal (with drift to the left), with respect to the same function as in Theorem \ref{qTAZRP}.

\end{theorem}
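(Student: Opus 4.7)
The plan is to verify the duality identity by direct computation, mimicking the single--species $q$--Boson to $q$--Hahn Boson passage of \cite{KuanCMP} and using Theorem \ref{qTAZRP} as the base case.

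First, I would write out the continuous--time generator $L^{qH}$ of the multi--species $q$--Hahn TAZRP from section 2.3 of \cite{KuanCMP}: its off--diagonal transitions are parametrised by a bond $(x, x+1)$ and a jump configuration $\bs\gamma = (\gamma_0, \ldots, \gamma_n)$ describing how many particles of each species are transferred from $x$ to $x+1$, with rate equal to the $\lambda$--derivative of $\Phi_q(\bs\gamma \mid \bs\xi^x; \lambda, \mu)$ at $\lambda = 1$. Because the generator is a sum over bonds and $D^{q-TAZRP}$ is a product of $q$--Pochhammer factors over sites and species, the intertwining relation $(L^{qH})^T D^{q-TAZRP} = D^{q-TAZRP} L^{qH,\mathrm{rev}}$ reduces to a bond--local identity once one cancels the factors of $D^{q-TAZRP}$ that are unaffected by a jump at bond $(x,x+1)$.

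Second, I would exploit the factored form of $\Phi_q$. The kernel splits as a scalar depending only on $|\bs\gamma|$, $|\bs\beta|$, $\lambda$, $\mu$ times the species--distribution weight $q^{\chi_{\bs\beta,\bs\gamma}} \prod_i \binom{\beta_i}{\gamma_i}_q$. Correspondingly, the ratio of duality function values before and after the jump factors into a product over species $i$ of ratios of $q$--Pochhammer symbols, each depending on how many jumping particles carry species label $\leq i$. The summation over $\bs\gamma$ at fixed total mass $m = |\bs\gamma|$ should then collapse, via a multivariate $q$--Vandermonde type identity, to a scalar identity involving only $m$ and the cumulative counts $N_{x-1}^{-}(\bs\xi_{[0,i-1]})$ and $N_{x+1}^{+}(T(\bs\eta)_{[i+1, n]})$. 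This collapsed identity in $m$ is exactly the single--species $q$--Hahn Boson duality identity verified in \cite{KuanCMP}, applied with shifted parameters; since the shifts are independent of $m$, the known single--species identity applies verbatim.

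Third, the charge--parity prefactor $q^{h(\bs\xi, \bs\eta)}$ must be tracked throughout. Under a multi--particle jump $\bs\gamma$ at site $x$, the change in $h$ combines the species labels of $\bs\gamma$ with cumulative counts of $T(\bs\eta)$, and this change must conspire with the $q^{\chi_{\bs\beta,\bs\gamma}}$ coming from $\Phi_q$ so that the bond--local identity remains symmetric between $\bs\xi$ and its space--reversed counterpart $T(\bs\eta)$. I expect this combinatorial bookkeeping of exponents of $q$, rather than the underlying $q$--hypergeometric identity, to be the main technical obstacle, since multi--particle jumps in the $q$--Hahn process rearrange several species simultaneously across the bond. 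As sanity checks, specializing $|\bs\gamma| = 1$ should recover the $q$--TAZRP identity of Theorem \ref{qTAZRP}, and specializing $n = 1$ should recover the single--species $q$--Hahn computation of \cite{KuanCMP}; matching both limits will guide the correct ordering of species--summations in the general case.
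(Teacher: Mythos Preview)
Your outline has the correct high--level structure (localise to a single bond, cancel unaffected factors, track $q^{h}$), but the two key technical steps do not go through as you describe.

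First, the single--species bond identity you need is \emph{not} the triangular--duality identity proved in \cite{KuanCMP}. The orthogonal duality function here is built from ratios of $q$--Pochhammers $(q^{-c+\gamma})_{\xi}/(q^{-c})_{\xi}$, and the bond--local statement becomes
\[
\sum_{j=0}^{x}\Phi_q(j\mid x;\lambda,\mu)\,\frac{(q^{-c+j})_y}{(q^{-c})_y}
\;=\;
\sum_{s=0}^{y}\Phi_q(s\mid y;\lambda,\mu)\,\frac{(q^{-c+s})_x}{(q^{-c})_x},
\]
which is a new ${}_3\phi_2$ symmetry (it follows from a Sears--type transformation, see \cite{Gas96}) rather than anything you can quote from \cite{KuanCMP}. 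So the plan of ``applying the known single--species identity with shifted parameters'' still leaves you with the main analytic work undone.

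Second, your proposed multi--species reduction --- fixing $m=|\bs\gamma|$ and collapsing the inner sum by a ``multivariate $q$--Vandermonde'' --- does not match the structure of the problem. Each species factor in the duality ratio depends on $\gamma_{n-1-i}$ individually (and on cumulative counts $\xi_{[0,n-2-i]}$ through the $q^{h}$ prefactor), so conditioning on the total mass does not decouple the sum. What actually works in the paper is an \emph{induction on the number of species}: one peels off the last species, applies the single--species ${}_3\phi_2$ identity above with the shifted parameters $\hat\lambda=\lambda q^{\gamma_{[0,n-1]}}$, $\hat\mu=\mu q^{\eta_{[0,n-1]}}$, and then invokes the inductive hypothesis on the remaining $n-1$ species. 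The role of $q^{h}$ and of $q^{\chi_{\beta,\gamma}}$ is precisely to make the peeled--off factors reassemble into $\Phi_q$ with these shifted parameters; no Vandermonde collapse is involved.
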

\subsection{Application of the Duality}
We end this section with an application of the duality function for the two--species $q$-TARZP in equation \eqref{dualityq-TARZP}. It is natural to exploit the duality relation for the computation of suitable moments of the process. In particular, one expects interacting particle systems to exhibit long range correlations arising as a consequence of the dual microscopic interaction. Using a simple initial configuration, the duality function can give information on such quantities, which are explicitly known only for few special cases. However, in the context of multi-species asymmetric particle systems, applications of duality are still under investigation. In \cite{BS3} the authors took advantage of the self-duality of multi-species exclusion process to obtain all invariant measures and to study the microscopic shocks dynamics. Below we show that, choosing a dual configuration $\boldsymbol{\eta}$ consisting of only two particles of different species and following their dynamics, we get a close formula for the expectation with respect to the initial process $\boldsymbol{\xi}$ in terms of the product of two $q-$shifted factorials.

Recall the incomplete gamma function $$\Gamma(a,z) =\int_{z}^{\infty} t^{a-1} \mathrm{e}^{-t} \mathrm{~d} t
$$
and define its normalization 
$$
Q(a, z)=\frac{\Gamma(a, z)}{\Gamma(a)} .
$$

\begin{theorem}\label{Application}
Suppose $\boldsymbol\xi$ has initial conditions with $n_1$ species 0 particles at site $x_1$ and $n_2$ species 1  particles at site $x_2$, or in other words 
$$
\boldsymbol\xi_i^x 
= 
\begin{cases}
n_1 \ \text{for} \  i=0,x=x_1,\\
n_2 \ \text{for} \  i=1,x=x_2,\\
0 \ \  \text{else},
\end{cases}
$$  
and $\boldsymbol\xi$ evolves as a $q$--TAZRP with total asymmetry to the right. Let $y_1$ and $y_2$ be some lattice sites, and additionally assuming that $x_1<x_2$.

The dual process $\boldsymbol\eta$ has 1 species 0 particles at site
$y_1$, 1 species 1 particles at site $y_2$, so in other words
$$
\boldsymbol\eta_i^y 
= 
\begin{cases}
1 \ \text{for} \ i=0,y=y_1,\\
1 \ \text{for} \ i=1,y=y_2,\\
0 \ \  \text{else}.
\end{cases}
$$  
Then there is the explicit formula 
\begin{align*}
\mathbb{E}_{\boldsymbol{\xi}} \Big[ q^{h(\boldsymbol\xi (t),\boldsymbol\eta)} \prod_{i=0}^{1}\prod_{x=1 \in \Lambda_L} & {}_1\phi_0\left(q^{-2\xi_i^x (t)};q^2,q^{-2\left(N_{x-1}^-(\boldsymbol\xi_i (t))+N_{x+1}^+(T(\boldsymbol\eta)_{i+1})\right)+1}\right) \Big]
\\ &  = \sum_{i=1}^6{\delta_ip_i},
\end{align*}
where
\begin{align*}   
   \delta_1:= q^{n_1}\left(q^{-2n_1-1};q^2\right)_{n_1} \left(q^{-2n_2+1};q^2\right)_{n_2}    ,\\
    \delta_2:=     q^{n_1}\left(q^{-2n_1+1};q^2\right)_{n_1} \left(q^{-2n_2+1};q^2\right)_{n_2}  ,\\
     \delta_3:=   q^{-n_1}\left(q^{-2n_1-1};q^2\right)_{n_1} \left(q^{-2n_2+1};q^2\right)_{n_2}    ,\\
      \delta_4:=    q^{-n_1}\left(q^{-2n_1+1};q^2\right)_{n_1} \left(q^{-2n_2+1};q^2\right)_{n_2}  ,\\
   \delta_5:=   q^{-n_1}\left(q^{-2n_1-1};q^2\right)_{n_1} \left(q^{-2n_2-1};q^2\right)_{n_2}  ,\\
    \delta_6:=     q^{-n_1}\left(q^{-2n_1+1};q^2\right)_{n_1} \left(q^{-2n_2-1};q^2\right)_{n_2}  ,
  \end{align*}
  and
  $$
p_1 = q_1-q_4, \quad p_2=q_4, \quad p_3 = q_2+q_3-q_5-q_6, 
$$
$$\quad p_4 = -q_3 + q_5+q_6, \quad p_5=q_5, \quad p_6=q_3-q_5,$$
with
$$
q_1 ={\delta_{y_1\le x_1+}\delta_{y_1> x_1} }Q(y_1-x_1,t) , \quad q_3 = 1 - {\delta_{y_1\ge x_2}Q(y_1-x_2,t)}{-\delta_{y_1<x_2}} , 
$$
and $\quad q_2 = 1- q_1-q_3,$
\begin{align*}
q_4&=\frac{ 1}{(2\pi i)^2}\int_{C} \frac{d w_{1}}{w_{1}}  \int_{C} \frac{d w_{2}}{w_{2}} \frac{w_1-w_2}{w_1-qw_2} \prod_{j=1}^{2}\left[(1-w_j)^{-(y_j-x_1+1)}e^{-w_{j} t}\right] ,\\
q_6 &= 1-q_1 -\frac{ q}{(2\pi i)^2}\int_{\tilde{C}_{1}} \frac{d w_{1}}{w_{1}}  \int_{\tilde{C}_{2}} \frac{d w_{2}}{w_{2}} \frac{w_1-w_2}{w_1-qw_2}\prod_{j=1}^{2}\left[(1-w_j)^{-(y_j-x_1+1)}e^{-w_{j} t}\right],\\
q_5&= \frac{ q}{(2\pi i)^2}\int_{\tilde{C}_{1}}  \frac{dw_1}{w_1}  \int_{\tilde{C}_{2}}  \frac{dw_2}{w_2} \frac{w_1-w_2}{w_1-qw_2}\frac{e^{-(w_1+w_2)t  } }{(1-w_1)^{({y_1-x_2}+1)}(1-w_2)^{({y_2-x_1}+1)}}  . \\
\end{align*}
\end{theorem}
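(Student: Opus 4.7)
The approach is to apply Theorem \ref{qTAZRP}, which converts the left-hand side into an expectation over the two-particle dual dynamics:
$$
\mathbb{E}_{\bs{\xi}}\!\left[D^{q-TAZRP}(\bs{\xi}(t),\bs{\eta})\right] \;=\; \mathbb{E}_{\bs{\eta}}\!\left[D^{q-TAZRP}(\bs{\xi},\bs{\eta}(t))\right],
$$
where now $\bs{\eta}(t)$ is the space-reversed two-particle $q$-TAZRP starting with one species-$0$ particle at $y_1$ and one species-$1$ particle at $y_2$, moving with total asymmetry to the left. Because the dual has only two particles, computing the right-hand side reduces to summing over the few reachable configurations, each weighted by the duality function $D^{q-TAZRP}(\bs{\xi},\cdot)$.

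The next step is to evaluate $D^{q-TAZRP}(\bs{\xi},\bs{\eta}(t))$ explicitly on the given initial $\bs{\xi}$. Since $\bs{\xi}$ is supported on just $x_1$ and $x_2$, the double product in the definition of $D^{q-TAZRP}$ collapses to at most two nontrivial $q$-Pochhammer factors: one for $i=0$ at $x=x_1$ (contributing $(\,\cdot\,;q^2)_{2n_1}$) and one for $i=1$ at $x=x_2$ (contributing $(\,\cdot\,;q^2)_{2n_2}$). The exponent $h(\bs{\xi},\bs{\eta}(t))$ likewise reduces to a short sum. The counters $N_x^{\pm}$ appearing in the arguments take only finitely many distinct values depending on whether each dual particle lies to the left or right of $x_1$ and $x_2$, so the duality function is piecewise constant as a function of the two dual particle positions. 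A case analysis (based on these positions, and on whether the dual species-$0$ particle has overtaken the dual species-$1$ particle) produces exactly six distinct values, which one checks equal $\delta_1,\ldots,\delta_6$.

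It remains to identify $p_i$ as the probability that $\bs{\eta}(t)$ occupies the $i$-th region at time $t$. In the dual dynamics each particle waits an exponential time before jumping one step left, so the single-particle marginals reduce to Poisson crossing probabilities expressible via the normalized upper incomplete gamma function, giving $q_1=Q(y_1-x_1,t)$, $q_3=1-Q(y_2-x_2,t)$, and $q_2 = 1-q_1-q_3$. The joint quantities $q_4,q_5,q_6$ encode the interaction between the two dual particles, where the priority rule modifies the naive product of single-particle probabilities; these are precisely the two-particle transition probabilities of the multi-species $q$-TAZRP, for which explicit contour integral formulas were recently derived by the second author in \cite{Kua21}. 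The $p_i$ are then obtained as signed combinations of the $q_j$'s by inclusion-exclusion across the six regions.

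The main obstacle will be the bookkeeping for the case analysis: the charge-parity transformation $T$ inside $D^{q-TAZRP}$ swaps the roles of the two dual species, and the priority-based interaction in the two-particle dual dynamics must be matched correctly with the orientation of the contour integrals of \cite{Kua21}. Once the correspondence between the six geometric regions and the $(\delta_i,p_i)$ pairs is fixed, the identity follows by direct substitution into the dual expectation.
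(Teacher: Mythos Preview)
Your proposal is correct and follows essentially the same approach as the paper: apply the duality of Theorem \ref{qTAZRP} to pass to the two-particle dual expectation, observe that $D^{q-TAZRP}(\bs\xi,\cdot)$ is piecewise constant in the dual positions and takes six values $\delta_1,\dots,\delta_6$ according to the location of $(y_1(t),y_2(t))$ relative to $x_1,x_2$, and then express the corresponding region probabilities $p_i$ via the auxiliary quantities $q_j$. One small refinement: the paper draws the contour integrals for $q_4$ and $q_6$ from the single-species results of \cite{Kor-Lee} and \cite{Lee-Wang-2017} (via the color-blind projection), reserving \cite{Kua21} only for $q_5$, which is the one place where genuinely multi-species input is needed; also, the six cases depend solely on the positions of $y_1(t),y_2(t)$ relative to $x_1,x_2$ and not on any overtaking event, so that part of your description can be dropped.
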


\begin{remark}
An asymptotic analysis of  the two--point correlations can be done, but requires technical details which are better-suited for a future paper. 
\end{remark}

\begin{remark}\label{remark:1}
 Note that in the $x_2\geq x_1$ situation, if furthermore $y_1,y_2>x_2$, then the expectation of the moments trivially reduces to a single--species $q$--TAZRP when there are infinitely many first--class particles. This is because the jump rate for a second--class particle is multiplied by $q^{\infty}=0$, so a jump for a second--class particle never occurs once it reaches $x_1$.
\end{remark}

\section{Proofs}\label{Pfs}
We devote this section to the rigorous proof of the previous results. 

\begin{remark}\label{REURem}

The proofs presented in this section generalize to other quantum groups and representations. In an upcoming paper by the second and third authors, the method is applied to the quantum group $\mathcal{U}_q(\mathfrak{so}_6)$ and the type $D$ ASEP \cite{REU2022}. 
\end{remark}
\noindent

Below we start by recalling the algebraic relations used in our proofs regarding the quantum group $\mathcal{U}_q(\mathfrak{gl}_{n+1})$.

\subsection{Quantum Groups and Representations}
The Drinfeld--Jimbo quantum group $\mathcal{U}_q(\mathfrak{gl}_{n+1})$ is the Hopf algebra with generators 
$$
\{E_{i,i+1},E_{i+1,i}: 0 \leq i < n\}, \{ q^{E_{ii}}: 0 \leq i \leq n\}
$$
and relations
\begin{align*}
& q^{E_{i i}} q^{E_{j j}}=q^{E_{j j}} q^{E_{i i}}=q^{E_{i i}+E_{j j}} \\ & \left[E_{i, i+1}, E_{i+1, i}\right]=\frac{q^{E_{i i}-E_{i+1, i+1}}-q^{E_{i+1, i+1}-E_{i i}}}{q-q^{-1}} \\& 
\left[E_{i, i+1}, E_{j+1, j}\right]=0, \quad i \neq j.
\end{align*}
For $j \neq i,i-1$
\begin{align*}
q^{E_{i i}} E_{i, i+1}=q E_{i, i+1} q^{E_{i i}} \quad q^{E_{i i}} E_{i-1, i}=q^{-1} E_{i-1, i} q^{E_{i i}} \quad\left[q^{E_{i i}}, E_{j, j+1}\right]=0.
\end{align*}
For $j \neq i,i+1$
\begin{align*}
q^{E_{i i}} E_{i, i-1}=q E_{i, i-1} q^{E_{i i}} \quad q^{E_{i i}} E_{i+1, i}=q^{-1} E_{i+1, i} q^{E_{i i}} \quad\left[q^{E_{i i}}, E_{j, j-1}\right]=0.
\end{align*}
For $i=j \pm 1$
\begin{align*}
 & E_{i, i+1}^{2} E_{j, j+1}-\left(q+q^{-1}\right) E_{i, i+1} E_{j, j+1} E_{i, i+1}+E_{j, j+1} E_{i, i+1}^{2}=0,
 \\
 & E_{i, i-1}^{2} E_{j, j-1}-\left(q+q^{-1}\right) E_{i, i-1} E_{j, j-1} E_{i, i-1}+E_{j, j-1} E_{i, i-1}^{2}=0.
 \end{align*}
 For $i \neq j \pm 1$
 \begin{align*}
 {\left[E_{i, i+1}, E_{j, j+1}\right]=0=\left[E_{i, i-1}, E_{j, j-1}\right] }.
\end{align*}
The co--product $\Delta: \mathcal{U}_q(\mathfrak{gl}_{n+1}) \to \mathcal{U}_q(\mathfrak{gl}_{n+1}) \otimes  \mathcal{U}_q(\mathfrak{gl}_{n+1})$ is defined by
\begin{align*}{}\Delta\left(E_{i, i+1}\right)&=E_{i, i+1} \otimes 1+q^{ E_{ii} - E_{i+1,i+1}} \otimes E_{i, i+1}, \\ \Delta\left(E_{i+1, i}\right)&=  1 \otimes E_{i+1, i}  + E_{i+1, i} \otimes q^{-(E_{ii}-E_{i+1,i+1}) },\\ \Delta\left(q^{E_{i, i}}\right)&=q^{E_{i, i}} \otimes q^{E_{i, i}}.\end{align*}
We also define iteratively higher powers $m \geq 2 $ of $\Delta$ as
\begin{equation*}
    \Delta^{m}: \mathcal{U}_q(\mathfrak{gl}_{n+1}) \to \mathcal{U}_q(\mathfrak{gl}_{n+1})^{\otimes (m+1)}
    \quad \text{where}
\end{equation*}
$$
\Delta^1:=\Delta, \qquad \Delta^m := (\Delta\otimes \underbrace{1\otimes\ldots\otimes 1}_{m-1 \text{ times}}) \Delta^{-1} \;.
$$
The antipode will not be explicitly needed. 
When $q$ is a nonzero real number, then $\mathcal{U}_q(\mathfrak{gl}_{n+1})$ has the structure of a Hopf $*$--algebra with the $*$ involution defined by 
$$
E_{i,i+1}^* = E_{i+1,i}q^{E_{ii}-E_{i+1,i+1}}, \quad E_{i+1,i}^*= q^{-E_{ii}+E_{i+1,i+1}}E_{i,i+1}, \quad (q^{E_{ii}})^* = q^{E_{ii}}.
$$

\begin{remark}\label{CentralExplanation}
In general, Drinfeld--Jimbo quantum groups have a $*$--Hopf algebra structure when $q$ is positive. See, for example, sections 9.2.4 and 9.3.5 of \cite{KSBook}. In the context of probability theory, all models will require $q$ to be positive anyway, so this is not a restrictive requirement.
\end{remark}
For any $0 \leq i \neq j \leq n$, define the root vectors  $E_{ij}$ inductively by 
$$
E_{ij} = E_{ik}E_{kj} - q^{-1}E_{kj}E_{ik}, \quad i <k<j \text{ or } i > k > j.
$$
This definition does not depend on the choice of $k$. With this definition, there is a central element \cite{GZB}:
\begin{equation*}
C_1:=\sum_{i=0}^{n} q^{2 i-2 n-1} q^{2 E_{i i}}+\left(q-q^{-1}\right)^{2} \sum_{0 \leq i<j \leq n} q^{2 j-2 n-2} q^{E_{i i}+E_{j j}} E_{i j} E_{j i}.
\end{equation*}
One can check by direct computation that $C_1^*=C_1$. Alternatively, this can be seen without computations. Namely, the $*$--involution must preserve the center, and since there is only one first--order Casimir element (by the Harish--Chandra isomorphism), it must be the case that $C_1^*=aC_1+b$. By checking any $q^{E_{ii}}$ term in $C_1$, we must have that $a=1$ and $b=0$. 

\begin{remark}
The Lie algebra $\mathfrak{gl}_{n+1}$ is not simple, but contains a simple sub-algebra $\mathfrak{sl}_{n+1}$ of co--dimension one. The simple Lie algebra $\mathfrak{sl}_{n+1}$ is  simply the algebra of $(n+1) \times (n+1)$ matrices with trace zero. The reason for instead using $\mathfrak{gl}_{n+1}$ is that the central element $C_1$ had already been explicitly computed. 
\end{remark}

Given any $m \geq 1$, let 
$$\mathcal{B}_m^{(n)} = \{\mu = (\mu_0,\ldots,\mu_n): \mu_0+\ldots+\mu_n=m\} \subseteq \mathbb{Z}^{n+1}.$$
Let $V^{(n)}_m$ be a vector space with basis vectors indexed by $\mathcal{B}_m^{(n)}$. Denote the basis vectors by $v_{\mu}$ for $\mu \in \mathcal{B}_m^{(n)}$. It was proved in Lemma 3.1 of \cite{Kuan-IMRN} that  $V^{(n)}_m$ is a representation of $\mathcal{U}_q(\mathfrak{gl}_{n+1})$ with action given by 
\begin{align*}
E_{i,i+1} v_{\mu} &= [\mu_{i+1}]_q v_{ \mu + \epsilon_i}, \quad 0 \leq i < n, \\
E_{i+1,i}v_{\mu}  &= [\mu_{i}]_q v_{\mu - \epsilon_i},\quad 0 \leq i < n, \\
q^{E_{ii}} v_{\mu} &= q^{\mu_i} v_{\mu}, \quad 0 \leq i \leq n,
\end{align*}
where $\epsilon_i = (0,0,\ldots,0,1,-1,0,\ldots,0)$ with the $1$ in the $i$th position. Furthermore, lemma 3.1 of \cite{Kuan-IMRN} proves that the root vectors $E_{ij}$ act as
\begin{equation}\label{Act}
\begin{aligned} E_{i j} v_{\mu} &=q^{\mu_{i+1}+\mu_{i+2}+\ldots+\mu_{j-1}}\left[\mu_{j}\right]_{q} v_{\mu_{j \rightarrow i}}, \\ E_{j i} v_{\mu} &=q^{\mu_{i+1}+\mu_{i+2}+\ldots+\mu_{j-1}}\left[\mu_{i}\right]_{q} v_{\mu_{i \rightarrow j}}, \end{aligned}
\end{equation}
where for $i<j$
\begin{equation*}
\begin{aligned} \mu_{j \rightarrow i} &=\left(\mu_{0}, \ldots, \mu_{i-1}, \mu_{i}+1, \mu_{i+1}, \ldots, \mu_{j-1}, \mu_{j}-1, \mu_{j+1}, \ldots, \mu_{n}\right), \\ \mu_{i \rightarrow j} &=\left(\mu_{0}, \ldots, \mu_{i-1}, \mu_{i}-1, \mu_{i+1}, \ldots, \mu_{j-1}, \mu_{j}+1, \mu_{j+1}, \ldots, \mu_{n}\right).\end{aligned}
\end{equation*}
So $\mu_{j\rightarrow i}$ represents the particle configuration where a species $j$  particle has been replaced with species $i$ 
particle.

Recall that a $*$--representation $V$ of a $*$--algebra $\mathcal{A}$ is a representation $V$ that carries an inner product $\langle \cdot, \cdot \rangle$ such that
$$
\langle av,w\rangle = \langle v,a^*w\rangle
$$
for all $a\in \mathcal{A}$ and $v,w\in V$. It will be shown later that there exists an inner product such that $V_m^{(n)}$ is a $*$--representation.

\subsection{A direct proof of Theorem \ref{FirstThm}(b)}
The orthogonality result in Theorem \ref{FirstThm}(b) can be proved directly using special functions, without prior algebraic and probability background. As such, it will be proved first. Before doing so, here is a lemma which generalizes Proposition 7.1 of \cite{CFG21} for the inhomogeneous case of the single--species ASEP$(q,\boldsymbol{\theta})$.

Given two single-species particle configurations $\boldsymbol{\xi}$ and $\boldsymbol{\eta}$, define 
\begin{equation*}
    K^{\boldsymbol\theta}(\boldsymbol\xi,\boldsymbol\eta)=\prod_{x}K_{\eta^x}\left(q^{-\xi^x},pq^{N_{x-1}^-(\boldsymbol\theta)}q^{-N_{x-1}^-(\boldsymbol\xi)+N_{x+1}^+(\boldsymbol\eta)},\theta^x,q\right)
\end{equation*}

and also let 
\begin{equation*}
    w_{\boldsymbol\theta}(\boldsymbol\xi)=\frac{(-1)^{N(\boldsymbol\theta)-N(\boldsymbol\xi)}}{\left(pq^{N(\boldsymbol\theta)-N(\boldsymbol\xi)+1};q\right)_\infty }\prod_{x=1}^L\frac{q^{\binom{\xi^x}{2}}q^{\theta^x N_{x-1}^-(\boldsymbol\xi)}}{(q;q)_{\xi^x}(q;q)_{\theta^x-\xi^x}},
\end{equation*}
\begin{equation*}
    h_{\boldsymbol\theta}(\boldsymbol\eta)=\frac{(-1)^{N(\boldsymbol\eta)}q^{\binom{N(\boldsymbol\theta)+1}{2}}p^{N(\boldsymbol\theta)}}{\prod_x[(q;q)_{\theta^x}]^{2} \left(pq^{N(\boldsymbol\eta)+1};q\right)_\infty} \prod_{x=1}^L\frac{(q;q)_{\eta^x}(q;q)_{\theta^x-\eta^x}}{q^{\binom{\eta^x+1}{2}}}q^{\theta^x N_{x}^+(\boldsymbol\eta)}.
\end{equation*}

\begin{lemma}\label{lemma:2}  There is the orthogonality relation
$$
\sum_{\boldsymbol\xi} \frac{w_{\boldsymbol\theta}(\boldsymbol\xi)}{h_{\boldsymbol\theta}(\boldsymbol\eta)} K^{\boldsymbol\theta}(\boldsymbol\xi,\boldsymbol\eta) K^{\boldsymbol\theta}(\boldsymbol\xi,\boldsymbol{\bar{\eta}}) = \delta_{\boldsymbol\eta,\bar{\boldsymbol\eta}}.
$$
\end{lemma}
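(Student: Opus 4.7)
My plan is to prove Lemma \ref{lemma:2} by induction on the number of lattice sites $L$, reducing the multi--site orthogonality to the one--variable $q$--Krawtchouk orthogonality of Chapter 14.14 of \cite{KLS} already quoted in the Orthogonal Polynomials subsection. For $L=1$ the nested product collapses to the single polynomial $K_{\eta^1}(q^{-\xi^1};p,\theta^1,q)$, and the definitions of $w_{\bs\theta}$ and $h_{\bs\theta}$ match the KLS reference weight and normalization exactly, once the prefactors $(-1)^{\cdot}$, $q^{\binom{\cdot}{2}}$, and $p^{\cdot}$ are reshuffled. The base case is then immediate.

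\textbf{Inductive step.} Fixing $\xi^1,\ldots,\xi^{L-1}$, I would perform the sum over the rightmost variable $\xi^L$ first. The reason is that the parameter
$$p_L = p\,q^{N_{L-1}^-(\bs\theta)}\,q^{-N_{L-1}^-(\bs\xi) + N_{L+1}^+(\bs\eta)} = p\,q^{N(\bs\theta) - \theta^L - N_{L-1}^-(\bs\xi)}$$
is independent of both $\xi^L$ and of $\bs\eta$, since $N_{L+1}^+(\bs\eta) = 0$. Consequently the two $K^{\bs\theta}$--factors each contain a single--site $q$--Krawtchouk polynomial in $\xi^L$ with the same parameters $(p_L,\theta^L)$, and the one--variable KLS orthogonality can be applied. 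To massage the global factor $(pq^{N(\bs\theta) - N(\bs\xi) + 1};q)_\infty^{-1}$ appearing in $w_{\bs\theta}(\bs\xi)$ into the KLS reference weight, I would use the telescoping identity
$$\frac{1}{(p\,q^{N(\bs\theta)-N(\bs\xi)+1};q)_\infty} = \frac{(p_L q;q)_{\theta^L-\xi^L}}{(p_L q;q)_\infty},$$
which follows from $p_L q \cdot q^{\theta^L - \xi^L} = pq^{N(\bs\theta) - N(\bs\xi) + 1}$ together with $(a;q)_\infty = (a;q)_k(aq^k;q)_\infty$. After this rewriting the $\xi^L$--dependent piece of $w_{\bs\theta}(\bs\xi)$ becomes exactly the KLS reference weight at $(c,p)=(\theta^L,p_L)$, and the KLS orthogonality produces $\delta_{\eta^L,\bar\eta^L}$ together with the explicit normalization $h^{(p_L,\theta^L)}_{\eta^L}$.

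\textbf{Closing the induction.} Once $\bar\eta^L=\eta^L$ is imposed, I would verify that the remaining sum over $(\xi^1,\ldots,\xi^{L-1})$ is precisely the $(L-1)$--site instance of the same lemma, with base parameter shifted to $p':=pq^{\eta^L}$. This shift is forced by the factorization
$$p_x = p'\,q^{N_{x-1}^-(\bs\theta')}\,q^{-N_{x-1}^-(\bs\xi')+N_{x+1}^+(\bs\eta')}, \qquad x<L,$$
which in turn follows from $N_{x+1}^+(\bs\eta)=\eta^L+N_{x+1}^+(\bs\eta')$, where primes denote restriction to the first $L-1$ sites. Invoking the inductive hypothesis with parameter $p'$ then gives $\delta_{\bs\eta',\bar{\bs\eta}'}$, and together with $\delta_{\eta^L,\bar\eta^L}$ this yields $\delta_{\bs\eta,\bar{\bs\eta}}$ as required.

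\textbf{Main obstacle.} The principal difficulty is the bookkeeping of the global prefactors in $h_{\bs\theta}(\bs\eta)$: terms such as $q^{\binom{N(\bs\theta)+1}{2}}$, $p^{N(\bs\theta)}$, and $(pq^{N(\bs\eta)+1};q)_\infty^{-1}$ depend non--multiplicatively on $\bs\theta$ and $\bs\eta$, and one must verify that multiplying the inductive normalization $h_{\bs\theta'}^{(p')}(\bs\eta')$ by the single--site KLS constant $h^{(p_L,\theta^L)}_{\eta^L}$ reproduces $h_{\bs\theta}(\bs\eta)$ exactly. This reduces to reindexing a few $\binom{\cdot+1}{2}$ sums and splitting a handful of infinite $q$--Pochhammer products via $(a;q)_\infty=(a;q)_k(aq^k;q)_\infty$, operations that are routine but require attention to sign and exponent conventions.
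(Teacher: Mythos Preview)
Your proposal is correct and is essentially the same argument as the paper's, just repackaged: both proofs reduce the $L$--site relation to an iterated application of the one--variable KLS $q$--Krawtchouk orthogonality, summing from the rightmost site inward so that the parameter $p_x$ is fixed at each step. The paper carries this out by directly computing that the product $\prod_{x} W_x^{\bs\theta}/H_x^{\bs\theta}$ of site--local weight/norm ratios telescopes into the global $w_{\bs\theta}(\bs\xi)/h_{\bs\theta}(\bs\eta)$, whereas you phrase the same telescoping as induction on $L$ with the base--parameter shift $p\mapsto p'=pq^{\eta^L}$; the ``main obstacle'' you flag (matching $h_{\bs\theta'}^{(p')}(\bs\eta')\cdot h^{(p_L,\theta^L)}_{\eta^L}$ to $h_{\bs\theta}(\bs\eta)$) is exactly the content of the paper's telescoping computation.
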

\begin{proof}

We start with the standard orthogonality relation for $q$-Krawtchouk (equation (112) of \cite{CFG21} or section 1.14.14 of \cite{KLS})
\begin{multline*}
     \sum_{k=0}^{\theta}\frac{(pq;q)_{\theta-k}(-1)^{\theta-k}q^{\binom{k}{2}}}{(q;q)_k(q;q)_{\theta-k}}K_m(q^{-k},p,\theta,q)K_n(q^{-k},p,\theta,q)\\
     =\delta_{m,n}\frac{(-1)^np^{\theta}(q;q)_{\theta-n}(q;q)_n(pq;q)_n}{[(q;q)_{\theta}]^2}q^{\binom{\theta+1}{2}-\binom{n+1}{2}+\theta n}.
\end{multline*}
Divide both sides by $(pq;q)_\infty$ and use (1.8.8) of  \cite{KLS}  to get 
\begin{multline*}
     \sum_{k=0}^{\theta}\frac{(-1)^{\theta-k}q^{\binom{k}{2}}}{(q;q)_k(q;q)_{\theta-k}(pq^{1+\theta-k};q)_\infty}K_m(q^{-k},p,\theta,q)K_n(q^{-k},p,\theta,q)\\
     =\delta_{m,n}\frac{(-1)^np^{\theta}(q;q)_{\theta-n}(q;q)_n}{[(q;q)_{\theta}]^2(pq^{1+n};q)_\infty}q^{\binom{\theta+1}{2}-\binom{n+1}{2}+\theta n}.
\end{multline*}
Define now
\begin{equation*}
    W(k,p,\theta,q)=\frac{(-1)^{\theta-k}q^{\binom{k}{2}}}{(q;q)_k(q)_{\theta-k}(pq^{1+\theta-k};q)_\infty},
\end{equation*}
\begin{equation*}
    H(n,p,\theta,q)=\frac{(-1)^np^{\theta}(q;q)_{\theta-n}(q;q)_n}{[(q;q)_{\theta}]^2(pq^{1+n};q)_\infty}q^{\binom{\theta+1}{2}-\binom{n+1}{2}+\theta n}.
\end{equation*}

Let 
\begin{equation*}
    K^{\boldsymbol\theta}(\boldsymbol\xi,\boldsymbol\eta)=\prod_{x=1}^LK_{\eta^x}\left(q^{-\xi^x},pq^{N_{x-1}^-(\boldsymbol\theta)}q^{-N_{x-1}^-(\boldsymbol\xi)+N_{x+1}^+(\boldsymbol\eta)},\theta^x,q\right)
\end{equation*}
and moreover define
\begin{equation*}
    W^{\boldsymbol\theta}_x(\boldsymbol\xi,\boldsymbol\eta)=W(\xi^x,pq^{N^{-}_{x-1}(\boldsymbol\theta)}q^{-N_{x-1}^-(\boldsymbol\xi)+N_{x+1}^+(\boldsymbol\eta)},\theta^x,q),
\end{equation*}
\begin{equation*}
   H^{\boldsymbol\theta}_x(\boldsymbol\xi,\boldsymbol\eta)=H(\eta^x,pq^{N^{-}_{x-1}(\boldsymbol\theta)}q^{-N_{x-1}^-(\boldsymbol\xi)+N_{x+1}^+(\boldsymbol\eta)},\theta^x,q),
\end{equation*}
\begin{equation*}
    K^{\boldsymbol\theta}_x(\boldsymbol\xi,\boldsymbol\eta)=K_{\eta^x}(\xi^x,pq^{N^{-}_{x-1}(\boldsymbol\theta)}q^{-N_{x-1}^-(\boldsymbol\xi)+N_{x+1}^+(\boldsymbol\eta)},\theta^x,q).
\end{equation*}

Next we show that  $\prod_{x=1}^L\frac{W_x(\boldsymbol\xi,\boldsymbol\eta)}{ H_x(\boldsymbol\xi,\boldsymbol\eta)}$  can be written as $\frac{w_{\boldsymbol\theta}(\boldsymbol\xi)}{h_{\boldsymbol\theta}(\boldsymbol\eta)}$.

\begin{align*} \prod_{x=1}^L\frac{W^{\boldsymbol\theta}_x(\boldsymbol\xi,\boldsymbol\eta)}{ H^{\boldsymbol\theta}_x(\boldsymbol\xi,\boldsymbol\eta)}=& \frac{(-1)^{N(\boldsymbol\theta)-N(\boldsymbol\xi)}\left(pq^{N(\boldsymbol\eta)+1};q\right)_\infty\prod_x[(q;q)_{\theta^x}]^{2}}{(-1)^{N(\boldsymbol\eta)}q^{\binom{N(\boldsymbol\theta)+1}{2}}p^{N(\boldsymbol\theta)}\left(pq^{N(\boldsymbol\theta) -N(\boldsymbol\xi)+1};q\right)_\infty }\\
&\times\prod_{x=1}^L\frac{q^{\binom{\xi^x}{2}}q^{\binom{\eta^x+1}{2}}q^{\theta^x N_{x-1}^-(\boldsymbol\xi)}}{(q;q)_{\xi^x}(q;q)_{\theta^x-\xi^x}(q;q)_{\eta^x}(q;q)_{\theta^x-\eta^x}q^{\theta^x N_{x}^+(\boldsymbol\eta)}}.  
\end{align*}

So now we let 
\begin{equation*}
    w_{\boldsymbol\theta}(\boldsymbol\xi)=\frac{(-1)^{N(\boldsymbol\theta)-N(\boldsymbol\xi)}}{\left(pq^{N(\boldsymbol\theta)-N(\boldsymbol\xi)+1};q\right)_\infty }\prod_{x=1}^L\frac{q^{\binom{\xi^x}{2}}q^{\theta^x N_{x-1}^-(\xi)}}{(q;q)_{\xi^x}(q;q)_{\theta^x-\xi^x}},
\end{equation*}
\begin{equation*}
    h_{\boldsymbol\theta}(\boldsymbol\eta)=\frac{(-1)^{N(\boldsymbol\eta)}q^{\binom{N(\boldsymbol\theta)+1}{2}}p^{N(\boldsymbol\theta)}}{\prod_x[(q;q)_{\theta^x}]^{2} \left(pq^{N(\boldsymbol\eta)+1}; q\right)_\infty} \prod_{x=1}^L\frac{(q;q)_{\eta^x}(q;q)_{\theta^x-\eta^x}}{q^{\binom{\eta^x+1}{2}}}q^{\theta^x N_{x}^+(\boldsymbol\eta)}.
\end{equation*}
Thus
$$
  \sum_{\boldsymbol\xi} \frac{w_{\boldsymbol\theta}(\boldsymbol\xi)}{h_{\boldsymbol\theta}(\boldsymbol\eta)} K^{\boldsymbol\theta}(\boldsymbol\xi,\boldsymbol\eta) K^{\boldsymbol\theta}(\boldsymbol\xi,\boldsymbol{\bar{\eta}}) = \prod_{x=1}^L
 \sum_{\xi^x=0}^{\theta^x} \frac{W^{\boldsymbol\theta}_x(\boldsymbol\xi,\boldsymbol\eta)}{H^{\boldsymbol\theta}_x(\boldsymbol\xi,\boldsymbol\eta)}K^{\boldsymbol\theta}_x(\boldsymbol\xi,\boldsymbol\eta) K^{\boldsymbol\theta}_x(\boldsymbol\xi,\bar{\boldsymbol\eta}).
$$

By the orthogonal relation
\begin{equation*}
    \sum_{\xi^x=0}^{\theta^x} \frac{W^{\boldsymbol\theta}_x(\boldsymbol\xi,\boldsymbol\eta)}{H^{\boldsymbol\theta}_x(\boldsymbol\xi,\boldsymbol\eta)}K^{\boldsymbol\theta}_x(\boldsymbol\xi,\boldsymbol\eta) K^{\boldsymbol\theta}_x(\boldsymbol\xi,\bar{\boldsymbol\eta})=\delta_{\eta^x,\bar{\eta}^x},
\end{equation*}
we get
$$
\sum_{\boldsymbol\xi} \frac{w_{\boldsymbol\theta}(\boldsymbol\xi)}{h_{\boldsymbol\theta}(\boldsymbol\eta)} K^{\boldsymbol\theta}(\boldsymbol\xi,\boldsymbol\eta) K^{\boldsymbol\theta}(\boldsymbol{\boldsymbol\xi,\bar{\eta}}) = \delta_{\boldsymbol\eta,\bar{\boldsymbol\eta}},
$$
or in other words, $K^{\boldsymbol\theta}(\boldsymbol\eta,\boldsymbol\xi)$ and $K^{\boldsymbol\theta}(\boldsymbol{\bar{\eta}},\boldsymbol\xi)$ are orthogonal with respect to $w_{\boldsymbol\theta}(\boldsymbol\xi)$ with squared norm $h_{\boldsymbol\theta}(\boldsymbol\eta)$.  

In addition, to recover the orthogonal relation appeared in \cite{CFG21}, we can rewrite $\frac{w_{\boldsymbol\theta}(\boldsymbol\xi)}{h_{\boldsymbol\theta}(\boldsymbol\eta)}$ as the following
\begin{equation*}
\begin{array}{cl}
   \frac{w_{\boldsymbol\theta}(\boldsymbol\xi)}{h_{\boldsymbol\theta}(\boldsymbol\eta)}  & =\frac{(-1)^{N(\boldsymbol\theta)-N(\boldsymbol\xi)}\left(pq^{N(\boldsymbol\eta)+1};q\right)_\infty\prod_x[(q;q)_{\theta^x}]^{2}}{(-1)^{N(\boldsymbol\eta)}q^{\binom{N(\boldsymbol\theta)+1}{2}}p^{N(\boldsymbol\theta)}\left(pq^{N(\boldsymbol\theta) -N(\boldsymbol\xi)+1}\right)_\infty } \\ & \quad\times
   \displaystyle\prod_{x=1}^L\frac{q^{\binom{\xi^x}{2}}q^{\binom{\eta^x+1}{2}}q^{\theta^x N_{x-1}^-(\boldsymbol\xi)}}{(q;q)_{\xi^x}(q;q)_{\theta^x-\xi^x}(q;q)_{\eta^x}(q;q)_{\theta^x-\eta^x}q^{\theta^x N_{x}^+(\boldsymbol\eta)}}\\ 
     &  =\frac{(p^{-1}q^{-N(\boldsymbol\theta-\boldsymbol\xi)};q)_\infty q^{\frac{N(\boldsymbol\xi)^2}{2}-\frac{N(\boldsymbol\eta)^2}{2}}}{(p^{-1}q^{-N(\boldsymbol\eta)};q)_\infty p^{N(\boldsymbol\xi)+N(\boldsymbol\eta)}}
     \\ & \quad\times\displaystyle{\prod_{x}}\binom{\theta^x}{\xi^x}_{q^{\frac{1}{2}}}\binom{\theta^x}{\eta^x}_{q^{\frac{1}{2}}}q^{-\left(N_{x-1}^-(\boldsymbol\theta)+\frac{1}{2}\theta^x\right)(\xi^x+\eta^x)} 
\end{array}
 \end{equation*}
where in the second equation we used the following identities:
\begin{equation*}
    \left(pq^{N(\boldsymbol\eta)+1};q\right)_\infty=(pq;q)_{\infty}(pq^{N(\boldsymbol\eta)+1};q)_{-N(\boldsymbol{\eta})}=
    \frac{ (pq;q)_{\infty} (-1)^{N(\boldsymbol\eta)}q^{-\binom{N(\boldsymbol\eta)+1}{2}}}{p^{N(\boldsymbol\eta)}(p^{-1}q^{-N(\boldsymbol\eta)};q)_{N(\boldsymbol\eta)}},
\end{equation*}
\begin{equation*}
    \frac{(p^{-1}q^{-N(\boldsymbol\theta-\boldsymbol\xi)};q)_{N(\boldsymbol\theta-\boldsymbol\xi)}}{(p^{-1}q^{-N(\boldsymbol\eta)};q)_{N(\boldsymbol\eta)}}= \frac{(p^{-1}q^{-N(\boldsymbol\theta-\boldsymbol\xi)};q)_{\infty}}{(p^{-1}q^{-N(\boldsymbol\eta)};q)_\infty}.
\end{equation*}

Note that if we substitute  $q$ by $q^2$ and let $p=\alpha^{-1}q^{-1}$ we get exactly \eqref{eq:4}, i.e. the orthogonal duality function for single species case.

\end{proof}
Now we work on the orthogonality of $\mathcal{D}^{\boldsymbol\theta}_{\boldsymbol{\alpha}}(\boldsymbol\xi, \boldsymbol\eta)$. Fix two particle configurations $\boldsymbol\eta$ and $\bar{\boldsymbol\eta}$. Let $\boldsymbol\xi$ be another particle configuration which is viewed as a variable, rather than a fixed configuration. As in section \ref{OPD}, define the intermediate particle configurations
$$
\boldsymbol\zeta^{(1)}, \ldots, \boldsymbol\zeta^{(n-1)}, \quad \quad \bar{\boldsymbol\zeta}^{(1)}, \ldots, \bar{\boldsymbol\zeta}^{(n-1)}.
$$

Since $\boldsymbol\xi_i$ is independent of $\boldsymbol\zeta^{(i)}_i$, thus sum from left to right $\boldsymbol\xi_0$ to $\boldsymbol\xi_{n-1}$, and  apply Lemma \ref{lemma:2},
\begin{equation*}
   \prod_{i=0}^{n-1} \sum_{\boldsymbol\xi_i}\frac{w_{\boldsymbol\theta^{(i)}}(\boldsymbol\xi_i)}{h^{1/2}_{\boldsymbol\theta^{(i)}}(\boldsymbol\zeta^{(i)}_i)h^{1/2}_{\boldsymbol\theta^{(i)}}(\bar{\boldsymbol\zeta}^{(i)}_i)} K^{\boldsymbol\theta^{(i)}}(\boldsymbol\xi_i, \boldsymbol\zeta^{(i)}_i)K^{\boldsymbol\theta^{(i)}}(\boldsymbol\xi_i, \bar{\boldsymbol\zeta}^{(i)}_i)=\delta_{\boldsymbol\eta,\bar{\boldsymbol\eta}},
\end{equation*}
Notice that  $\boldsymbol\theta^{(i)}=\bar{\boldsymbol\theta}^{(i)}$ in the summation of $\boldsymbol\xi_{i}$, thus we have 
\begin{equation*}
   \sum_{\boldsymbol\xi} \prod_{i=0}^{n-1}\sqrt{\frac{w_{\boldsymbol\theta^{(i)}}(\boldsymbol\xi_i)}{h_{\boldsymbol\theta^{(i)}}(\boldsymbol\zeta^{(i)}_i)}}K^{\boldsymbol\theta^{(i)}}(\boldsymbol\xi_i, \boldsymbol\zeta^{(i)}_i)\sqrt{\frac{w_{\bar{\boldsymbol\theta}^{(i)}}(\boldsymbol\xi_i)}{h_{\bar{\boldsymbol\theta}^{(i)}}(\bar{\boldsymbol\zeta}^{(i)}_i)}}K^{\bar{\boldsymbol\theta}^{(i)}}(\boldsymbol\xi_i, \bar{\boldsymbol\zeta}^{(i)}_i)=\delta_{\boldsymbol\eta,\bar{\boldsymbol\eta}},
\end{equation*}
or, in other words, $\prod_{i=0}^{n-1}\sqrt{\frac{w_{\boldsymbol\theta^{(i)}}(\boldsymbol\xi_i)}{h_{\boldsymbol\theta^{(i)}}(\boldsymbol\zeta^{(i)}_i)}}K^{\boldsymbol\theta^{(i)}}(\boldsymbol\xi_i, \boldsymbol\zeta^{(i)}_i) $ is orthonormal w.r.t standard inner product. \newline
Thus far, the parameter $\boldsymbol\alpha$ has not come up. Now, if we substitute  $q$ by $q^2$, let $p_i=\alpha_i^{-1}q^{-1}$ and divide by $\sqrt{\mu^{n}(\boldsymbol\xi) \mu^{n}(\boldsymbol\eta)}$ , up to a constant, we get  $\mathcal{D}^{\boldsymbol\theta}_{\boldsymbol{\alpha}}(\boldsymbol\xi, \boldsymbol\eta)$, which is the orthogonal duality function for multi-species case.  More precisely, now $w$ and $h$ depend on $\boldsymbol\alpha$ and we write
\begin{equation}
   \sum_{\boldsymbol\xi} \prod_{i=0}^{n-1}\sqrt{\frac{w_{\boldsymbol\theta^{(i)}}(\boldsymbol\xi_i)}{h_{\boldsymbol\theta^{(i)}}(\boldsymbol\zeta^{(i)}_i)}}  \frac{K^{\boldsymbol\theta^{(i)}}(\boldsymbol\xi_i, \boldsymbol\zeta^{(i)}_i)}{\sqrt{\mu^n(\boldsymbol\xi)\mu^n(\boldsymbol\eta)}} \sqrt{\frac{w_{\bar{\boldsymbol\theta}^{(i)}}(\boldsymbol\xi_i)}{h_{\bar{\boldsymbol\theta}^{(i)}}(\bar{\boldsymbol\zeta}^{(i)}_i)}} \frac{K^{\bar{\boldsymbol\theta}^{(i)}}(\boldsymbol\xi_i, \bar{\boldsymbol\zeta}^{(i)}_i)}{\sqrt{\mu^n(\boldsymbol\xi)\mu^n(\bar{\boldsymbol\eta})}} \mu^n(\boldsymbol\xi) = \frac{\delta_{\boldsymbol\eta,\bar{\boldsymbol\eta}}}{\mu^n(\boldsymbol\eta)}.
\end{equation}
By a direct, but lengthy calculation, 
$$
\prod_{i=0}^{n-1}\sqrt{\frac{w_{\boldsymbol\theta^{(i)}}(\boldsymbol\xi_i)}{h_{\boldsymbol\theta^{(i)}}(\boldsymbol\zeta^{(i)}_i)}}  = \mathcal{C}_{\boldsymbol\alpha}(\boldsymbol\xi,\boldsymbol\eta)  \sqrt{\prod_{i=0}^{n-1}\mu_{\alpha_i}^{\boldsymbol{\theta}^{(i)}}(\boldsymbol\xi_i)\mu_{\alpha_i}^{\boldsymbol{\theta}^{(i)}}(\boldsymbol\zeta^{(i)}_i)}
$$
and similarly for $\mathcal{C}_{\boldsymbol\alpha}(\boldsymbol\xi,\bar{\boldsymbol\eta}).$ Since
$$
\mathcal{D}^{\boldsymbol\theta}_{\boldsymbol{\alpha}}(\boldsymbol\xi, \boldsymbol\eta) =\mathcal{G}_{\boldsymbol{\alpha}}(\boldsymbol\xi,\boldsymbol\eta)\prod_{i=0}^{n-1}D^{  \boldsymbol{\theta}^{(i)} }_{\alpha_i}(\boldsymbol\xi_i, \boldsymbol\zeta^{(i)}_i)
 ,
$$
where 
$$
\mathcal{G}_{\boldsymbol{\alpha}}(\boldsymbol\xi,\boldsymbol\eta) = \sqrt{ \frac{ \prod_{i=0}^{n-1}\mu_{\alpha_i}^{\boldsymbol{\theta}^{(i)}}(\boldsymbol\xi_i)\mu_{\alpha_i}^{\boldsymbol{\theta}^{(i)}}(\boldsymbol\zeta^{(i)}_i)  }{\mu^{n}(\boldsymbol\xi) \mu^{n}(\boldsymbol\eta) }},$$
this completes the proof.

\subsection{Previously Known Results}

\subsubsection{A general algebraic set up for duality}
The multi--species ASEP$(q,\boldsymbol\theta)$ was constructed in \cite{Kuan-IMRN}, generalizing the single--species ASEP$(q,\theta)$ in \cite{CGRS}. In this process, up to $\theta$ particles can occupy each site, and the particles have a drift with asymmetry parameter $q^{\theta}$. The generator was constructed using the Casimir element from \cite{REU2020}. An inhomogeneous version was constructed in \cite{KuanSF}, where the maximum number of particles per site can vary. 

The set-up involves the following ingredients see Section 3.5 of \cite{Kuan-IMRN} or Proposition 2.1 of \cite{CGRS} for more details:

\begin{itemize}
\item
A Hamiltonian $H$, and a symmetry $S$ as in Definition \ref{symm}.
\item
A change-of-basis diagonal matrix $B$, such that $B^{-1}HB$ is self--adjoint.
\item
A ground state transformation $G$, which is a diagonal matrix such that $L:=G^{-1}HG$ is the generator of a Markov process and $\tilde{S} = G^{-1}SG$ is a symmetry for $L$. 
\end{itemize}

The result is then that $D:= \tilde{S}G^2B^{-2} =  G^{-1}SG^{-1}B^2$ is a self--duality function, and $G^2B^{-2}$ are reversible measures for the multi--species ASEP$(q,\boldsymbol\theta)$. More specifically, 
\begin{equation}\label{GB}
G^2B^{-2}(\boldsymbol\xi,\boldsymbol\xi) = \sum_{\boldsymbol k} \frac{\mu_{\boldsymbol k}^n(\boldsymbol\xi)}{Z_{n,\boldsymbol k}},
\end{equation}
for some normalizing constants $Z_{n,\boldsymbol k}$. The matrices $G$ and $B$ are not unique, but any differences can be absorbed into the normalizing constants. 

\subsection{Preliminary Lemmas}

 In order for the symmetries to be called unitary, they need to be unitary with respect to an inner product on the representation. The next lemma explicitly defines such an inner product.

\begin{lemma}\label{FirstLemma}
(a)
Define an inner product on $V_m^{(n)}$ by 
$$
\langle v_{\mu},v_{\nu}\rangle = 1_{\{\mu=\nu\}} q^{O(\mu)} \frac{[\mu_0]_q^! \cdots [\mu_n]_q^!}{[m]_q^!},
$$
where
$$
  O(\mu) = -( \mu_1 + 2\mu_2 + \ldots + n \mu_n )+ \frac{\mu_0^2}{2} + \frac{\mu_1^2}{2} + \cdots + \frac{\mu_n^2}{2}.
$$
Then $V_m^{(n)}$ is a $*$--representation of $\mathcal{U}_q(\mathfrak{gl}_{n+1})$ with action given by \eqref{Act}. 

(b) An equivalent formula  for the inner product is
$$
\langle v_{\mu},v_{\nu} \rangle = \mathrm{const} \cdot 1_{\{\mu=\nu\}} q^{(\rho,\mu)} \{\mu_0\}_{q^2}^! \cdots \{\mu_n\}_{q^2}^!,
$$
where $\rho$ is half the sum of the positive roots and $(\cdot,\cdot)$ is the Killing form.

(c) For any $X\in \mathcal{U}_q(\mathfrak{gl}_{n+1})$, let $M_X$ be the matrix of the action of $X$ on $V_m^{(n)}$ with respect to the basis $\{v_{\mu}: \mu \in \mathcal{B}_m^{(n)}\}$. Then
$$
B^2 M_X^T B^{-2}   = M_{X^*},
$$
where $B$ is the diagonal matrix with entries $$B(\lambda,\mu) = 1_{\{\lambda=\mu\}} \frac{1}{\sqrt{\{\mu_0\}_{q^2}^! \cdots \{\mu_n\}_{q^2}^!}}.$$
In particular, if $U$ is unitary i.e. $U^*=U^{-1}$, then
$$
B^2 M_U^T B^{-2}   = M_{U}^{-1}.
$$
\end{lemma}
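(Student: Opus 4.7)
The plan is: part (a) by direct verification on algebra generators, part (b) by applying the conversion identity \eqref{Relate}, and part (c) by the standard linear-algebra characterization of the adjoint with respect to an orthogonal basis.

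For part (a), the adjointness relation $\langle Xv,w\rangle = \langle v, X^*w\rangle$ extends automatically from generators to all of $\mathcal{U}_q(\mathfrak{gl}_{n+1})$ because $*$ is an anti-algebra involution (if it holds for $X$ and $Y$, it holds for $XY$). So it suffices to check it on the generators $q^{E_{ii}}$, $E_{i,i+1}$, $E_{i+1,i}$. The first is immediate because $q^{E_{ii}}$ acts diagonally and is self-adjoint. For $E_{i,i+1}$, both $\langle E_{i,i+1}v_\mu,v_\nu\rangle$ and $\langle v_\mu, E_{i,i+1}^*v_\nu\rangle$ are supported on $\nu = \mu+\epsilon_i$, and after using the action \eqref{Act} together with $E_{i,i+1}^* = E_{i+1,i}q^{E_{ii}-E_{i+1,i+1}}$, matching the two sides reduces to the identity
\begin{equation*}
O(\mu+\epsilon_i) - O(\mu) = \mu_i - \mu_{i+1} + 2,
\end{equation*}
which I would verify by computing separately the linear contribution ($+1$) and the quadratic contribution ($\mu_i-\mu_{i+1}+1$) of $O$ under the shift $\mu\mapsto\mu+\epsilon_i$. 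The case $E_{i+1,i}$ is handled by a symmetric computation.

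For part (b), substitute $[\mu_j]_q^! = q^{-\mu_j(\mu_j-1)/2}\{\mu_j\}_{q^2}^!$ from \eqref{Relate} into the formula in (a). The exponents $-\tfrac12\mu_j(\mu_j-1)$ combine with the quadratic piece $\tfrac12\sum \mu_j^2$ inside $O(\mu)$ and leave only the linear term $-\sum_j j\,\mu_j$, up to constants that depend only on $m=\sum_j\mu_j$. For $\mathfrak{gl}_{n+1}$, taking $\rho=\tfrac12\sum_{i<j}(\epsilon^*_i-\epsilon^*_j)$ (half the sum of positive roots) and $(\cdot,\cdot)$ the standard trace form gives $(\rho,\mu) = \tfrac{nm}{2} - \sum_j j\,\mu_j$, so the remaining exponent is exactly $(\rho,\mu)$ up to an $m$-dependent constant absorbed into ``const''.

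For part (c), use the general fact: for a real orthogonal basis $\{v_\mu\}$ with squared norms $c_\mu$, the matrix of the adjoint satisfies $M_{X^*} = D^{-1}M_X^T D$ with $D = \mathrm{diag}(c_\mu)$; this follows by pairing $v_\mu$ and $v_\nu$ on both sides of $\langle Xv,w\rangle=\langle v,X^*w\rangle$ and using that the matrix entries from \eqref{Act} are real (since $q>0$). Part (b) identifies $D$ with $B^{-2}$ up to the normalization built into $B$, so conjugation yields the claimed $B^2M_X^T B^{-2} = M_{X^*}$. The unitary consequence is then immediate: if $U^*=U^{-1}$ then $M_{U^*}=M_{U^{-1}}=M_U^{-1}$. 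The main obstacle is the careful bookkeeping of $q$-exponents in part (a): the factor $q^{\mu_i-\mu_{i+1}+2}$ generated when $q^{E_{ii}-E_{i+1,i+1}}$ in $E_{i,i+1}^*$ is applied to $v_{\mu+\epsilon_i}$ must conspire with the ratio $[\mu_i+1]_q/[\mu_{i+1}]_q$ of $q$-factorials so as to equal exactly the ratio $c_{\mu+\epsilon_i}/c_\mu$. All the necessary cancellation is encoded in the single identity on $O$ above; once that is established, the remainder of the lemma is essentially formal.
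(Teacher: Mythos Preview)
Your proposal is correct and follows essentially the same approach as the paper: part (a) by direct verification on the generators, reducing to the identity $O(\mu+\epsilon_i)-O(\mu)=\mu_i-\mu_{i+1}+2$; part (b) via the conversion formula \eqref{Relate}; and part (c) by the standard linear-algebra characterization of the adjoint with respect to an orthogonal basis. The paper's own argument for (c) is equally terse on the passage from the full norm diagonal $\mathrm{diag}(c_\mu)$ to $B^{-2}$.
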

\begin{proof}
(a) This part of the result has possibly appeared before, but the authors were unable to find it in the literature. In any case, it is not difficult to prove anyway, so for completeness the proof is presented here.

Recall that the $*$ involution is given on generators by  
$$
E_{i,i+1}^* = E_{i+1,i}q^{E_{ii}-E_{i+1,i+1}} , \quad E_{i+1,i}^*= q^{-E_{ii}+E_{i+1,i+1}}E_{i,i+1}, \quad (q^{E_{ii}})^* = q^{E_{ii}}.
$$
So one checks that
\begin{align*}
\langle E_{i,i+1}v_{\mu},v_{\nu}\rangle &= [\mu_{i+1}]_q\langle v_{\mu+\epsilon_i},v_{\nu}\rangle \\
&= 1_{\{\mu+\epsilon_i=\nu\}} q^{O(\nu)}  [\mu_{i+1}]_q \frac{[\nu_0]_q^! \cdots [\nu_n]_q^!}{[m]_q^!} \\
&= 1_{\{\mu+\epsilon_i=\nu\}}  [\mu_i+1]_q \cdot  q^{O(\nu)}  \frac{[\mu_0]_q^! \cdots [\mu_n]_q^!}{[m]_q^!}
\end{align*}
while
\begin{align*}
\langle v_{\mu}, E_{i+1,i}q^{E_{ii}-E_{i+1,i+1}} v_{\nu}\rangle &= [\nu_i]_q q^{\nu_i-\nu_{i+1}} \langle v_{\mu},v_{\nu-\epsilon_i}\rangle\\
&=1_{\{\mu=\nu-\epsilon_i\}} [\nu_i]_q q^{\nu_i-\nu_{i+1}} q^{O(\mu)}  \frac{[\mu_0]_q^! \cdots [\mu_n]_q^!}{[m]_q^!}. 
\end{align*}
Now observe that when $\nu=\mu+\epsilon_i$, we have $\nu_i=\mu_i+1$ and 
\begin{align*} \nu_i-\nu_{i+1} +O(\mu) &= \left( \sum_{k\neq i,i+1} \left(-k\mu_k + \frac{\mu_k^2}{2}\right) \right)\\&\quad + \mu_i+1-\mu_{i+1}+1 -  i\mu_i - (i+1)\mu_{i+1} + \frac{\mu_i^2}{2} + \frac{\mu_{i+1}^2}{2} \\
&= \left( \sum_{k\neq i,i+1} \left(-k\nu_k + \frac{\nu_k^2}{2} \right)\right)\\&\quad + 1 - i(\nu_i-1) - (i+1)(\nu_{i+1}+1) + \frac{(\mu_i+1)^2}{2} + \frac{(\mu_{i+1}-1)^2}{2} \\
&=   O(\nu).
\end{align*}
Thus $\langle E_{i,i+1}v_{\mu},v_{\nu}\rangle = \langle v_{\mu}, E_{i,i+1}^*v_{\nu}\rangle$ for all $\mu,\nu \in \mathcal{B}_m^{(n)}$. One can similarly verify that for $E_{i+1,i}$ and $E_{i,i}$.

(b) The $[m]_q^!$ in the denominator can be removed, but by leaving it in the inner product it can be written as the multinomial
$$
\langle v_{\mu},v_{\nu}\rangle = 1_{\{\mu=\nu\}} \frac{q^{O(\mu)}} { \binom{m}{\mu_0,\ldots,\mu_n}_q}.
$$
Letting $\rho$ be half the sum of the positive roots, we have
$$
(\rho -\frac{n}{2}I ,\mu) = -\mu_1 - 2\mu_2 - \ldots -(n-1)\mu_{n-1} -n\mu_n, 
$$
where $(\cdot,\cdot)$ is the Killing form. This shows the equivalence of the two formulas.

(c) By definition, 
$$
\langle X v_{\mu}, v_{\lambda} \rangle = \langle v_{\mu},X^*v_{\lambda} \rangle
$$
so therefore
$$
M_X(\lambda,\mu) \langle v_{\lambda},v_{\lambda} \rangle = M_{X^*}(\mu,\lambda) \langle v_{\lambda}, v_{\lambda} \rangle
$$
implying the result. The statement when $U$ is unitary is immediate. 

\end{proof}

This paper uses an inhomogeneous multi--species ASEP$(q,\boldsymbol\theta)$, where the maximum number of particles at a site may vary. The next proposition establishes that the algebraic construction of \cite{Kuan-IMRN} still carries over in the inhomogeneous case. Compared to \cite{Kuan-IMRN}, the argument below is slightly streamlined by using the $*$--algebra structure. 

\begin{prop}
Consider the representation 
$$
V_{\theta_1}^{(n)} \otimes \cdots \otimes V_{\theta_L}^{(n)}
$$
of the quantum group $\mathcal{U}_q(\mathfrak{gl}_{n+1})$. 
The generator of the multi--species ASEP$(q,\boldsymbol\theta)$ can be written in terms of the $L-1$ co--product of the central elements $C_1$ 

$$
G^{-1}\Delta^{L-1}(C_1)G,
$$
where $G$ is the diagonal map with entries
\begin{equation*}
\prod_{i=0}^{n} \prod_{x=1}^{L} \frac{1}{\left[\eta_{i}^{x}\right]_{q}^{!}} \times \prod_{1 \leq y<x \leq L} \prod_{i=0}^{n-1} q^{-\eta_{i+1}^{y} \eta_{[1, i]}^{x}} .
\end{equation*}
\end{prop}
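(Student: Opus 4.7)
The strategy is to fit into the Hamiltonian/symmetry/ground-state framework recalled above, taking $H := \Delta^{L-1}(C_1)$ as the Hamiltonian. Centrality of $C_1$ ensures that $H$ commutes with $\Delta^{L-1}(X)$ for every $X \in \mathcal{U}_q(\mathfrak{gl}_{n+1})$, so $H$ automatically carries the symmetries needed to manufacture duality functions later. The identity $C_1^* = C_1$, combined with Lemma \ref{FirstLemma}(c) applied to each tensor factor, implies that $H$ is self-adjoint with respect to the tensor inner product on $V_{\theta_1}^{(n)} \otimes \cdots \otimes V_{\theta_L}^{(n)}$. Thus $H$ is a genuine Hamiltonian in the sense of the general set-up, and the task reduces to identifying the diagonal ground-state transformation $G$.

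The main computation is an explicit expansion of $H$. The Cartan piece $\sum_i q^{2i-2n-1}q^{2E_{ii}}$ of $C_1$ is group-like, so $\Delta^{L-1}$ applied to it acts diagonally in the standard basis by the weight $q^{2\sum_x \mu_i^{(x)}}$ for each $i$. For the interaction piece $(q-q^{-1})^2 \sum_{i<j} q^{2j-2n-2} q^{E_{ii}+E_{jj}}E_{ij}E_{ji}$, one iterates the coproduct formulas for $E_{i,i+1}, E_{i+1,i}$ together with the inductive definition $E_{ij} = E_{ik}E_{kj} - q^{-1}E_{kj}E_{ik}$ to extract all cross-site terms $E_{ij}^{(x)} \otimes E_{ji}^{(y)}$, each accompanied by strings of $q^{E_{ii}-E_{jj}}$ factors at intermediate lattice sites. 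Applying \eqref{Act}, these translate into particle-exchange matrix elements proportional to $[\mu_i^{(x)}]_q [\mu_j^{(y)}]_q$, weighted by $q$-powers that record the species configuration between sites $x$ and $y$.

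The ground-state transformation $G$ then plays two complementary roles. Its single-site factors $\prod_{x,i} 1/[\eta_i^x]_q^!$ implement, tensor-factor-by-tensor-factor, the matrix $B$ of Lemma \ref{FirstLemma}(c): via \eqref{Relate} they convert the $[\cdot]_q$-valued Hermitian matrix entries of $H$ into the asymmetric $\{\cdot\}_{q^2}$-valued jump rates appearing in $\mathcal{L}_{x,x+1}$, and simultaneously produce reversible measures of the form \eqref{GB}. The cross-site factors $q^{-\eta_{i+1}^y \eta_{[1,i]}^x}$ for $y<x$ absorb the string decorations coming from the coproduct, producing precisely the weights $q^{2(\xi_0^x+\cdots+\xi_{k-1}^x)}$ and $q^{2(\xi_{l+1}^{x+1}+\cdots+\xi_n^{x+1})}$ that appear in $\mathcal{L}_{x,x+1}$. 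After this conjugation, the off-diagonal entries of $G^{-1}HG$ agree with the ASEP$(q,\bs{\theta})$ jump rates.

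The main obstacle is to verify that cross-site contributions from $E_{ij}^{(x)} \otimes E_{ji}^{(y)}$ with $|x-y| \ge 2$ assemble into nearest-neighbor interactions only, so that $G^{-1}HG$ equals the nearest-neighbor generator $\sum_x \mathcal{L}_{x,x+1}$. The recursion $E_{ij} = E_{ik}E_{kj} - q^{-1}E_{kj}E_{ik}$, applied inside the coproduct, drives the necessary cancellations, and an induction on $L$ using the coassociativity of $\Delta$ organizes them. Once the off-diagonal matrix entries have been matched, stochasticity (equality of each diagonal entry of $G^{-1}HG$ with minus the corresponding off-diagonal column sum) follows automatically from self-adjointness of $H$ together with the detailed balance encoded in $G^2 B^{-2}$.
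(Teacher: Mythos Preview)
Your framework is sound, but there is a genuine gap in your handling of long-range terms, and you miss the paper's main simplification. Your claim that the root-vector recursion $E_{ij} = E_{ik}E_{kj} - q^{-1}E_{kj}E_{ik}$ drives cancellations so that contributions with $|x-y| \ge 2$ vanish is false: already for $n=1$ (a single simple root, so no recursion is available) the three-site expansion of $\Delta^2(E_{01}E_{10})$ contains $E_{01} \otimes 1 \otimes E_{10}$ and its partner $q^{E_{00}-E_{11}}E_{10} \otimes 1 \otimes E_{01}q^{-(E_{00}-E_{11})}$, which are genuinely long-range and do not cancel; since $G$ is diagonal, conjugation cannot kill them either. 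The paper never confronts this issue because it works from the outset at the two-site level, computing the local matrix entries $h^{x,x+1}$ extracted from $\Delta(C_1)$ and deferring the $L$-site assembly and the $G$-conjugation to \cite{Kuan-IMRN}; the symbol $\Delta^{L-1}(C_1)$ in the statement is to be read as shorthand for the nearest-neighbour Hamiltonian built from these two-site blocks, not the literal iterated coproduct.

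You also miss what the paper actually does with the $*$-structure. The paper computes only the \emph{left}-jump entries $h^{x,x+1}((\mu_{i\to j},\lambda_{j\to i}),(\mu,\lambda))$, by expanding $\Delta(C_1)$ into four families of terms indexed by intermediate indices $r<i$ and $s>j$. The \emph{right}-jump entries then come for free from $\Delta(C_1)^* = \Delta(C_1)$ and the $*$-representation inner product, via
\[
h^{x,x+1}\bigl((\mu,\lambda),(\mu_{i\to j},\lambda_{j\to i})\bigr)\,\|v_\mu \otimes v_\lambda\|^2
= h^{x,x+1}\bigl((\mu_{i\to j},\lambda_{j\to i}),(\mu,\lambda)\bigr)\,\|v_{\mu_{i\to j}} \otimes v_{\lambda_{j\to i}}\|^2 .
\]
This halves the off-diagonal computation and is precisely the streamlining over \cite{Kuan-IMRN} that the proposition advertises; your proposal uses the $*$-structure only at the end, for stochasticity.
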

\begin{proof}
The proofs of Propositions 4.1 and 4.2 of \cite{Kuan-IMRN} generalize to the inhomogeneous case, which would imply the proposition here. By using the $*$--algebra structure, the proof presented below is slightly more streamlined than the proofs in \cite{Kuan-IMRN}. The reader does not need to refer to \cite{Kuan-IMRN} for the proof below.

Fix $0 \leq i < j \leq n$. The claim is that
\begin{multline}
h^{x, x+1}\left(\left(\mu_{i \rightarrow j}, \lambda_{j \rightarrow i}\right),(\mu, \lambda)\right)  
=q^{-2} q^{\mu_{i}+\mu_{i+1}+\cdots+\mu_{j-1}}\left\{\mu_{i}\right\}_{q^{2}}\\ \times q^{\lambda_{i+1}+\cdots+\lambda_{j}}\left\{\lambda_{j}\right\}_{q^{2}} q^{2\left(\lambda_{j+1}+\ldots+\lambda_{n}\right)} q^{2\left(\mu_{1}+\ldots+\mu_{i-1}\right)} \label{LeftJumps}
\end{multline}
and
\begin{multline}
    h^{x, x+1}\left(\left(\mu_{j \rightarrow i}, \lambda_{i \rightarrow j}\right),(\mu, \lambda)\right)=q^{\mu_{i}+\ldots+\mu_{j-1}}\left\{\mu_{j}\right\}_{q^{2}} q^{\lambda_{i+1}+\ldots+\lambda_{j}}\left\{\lambda_{i}\right\}_{q^{2}}\\ \times q^{2\left(\lambda_{j+1}+\ldots+\lambda_{n}\right)} q^{2\left(\mu_{1}+\ldots+\mu_{i-1}\right)}. \label{RightJumps}
\end{multline}

First start with the equation \eqref{LeftJumps}, which is the equation for left jumps. When applying the co--product to the central element $C_1$, the number of off--diagonal terms equals
\begin{equation*}
\sum_{1 \leq i<j \leq n}\left((j-i+1)^{2}-(j-i+1)\right)=\sum_{1 \leq i<j \leq n}(2(i-1)(n-j)+2+2(n-j)+2(i-1)).
\end{equation*}
Each of the four summands on the right--hand--side corresponds to four expressions of the form
\begin{equation}
E_{r i} E_{j r} \otimes E_{i s} E_{s j}, \quad E_{ji} \otimes E_{ij}, \quad  E_{ri} E_{jr} \otimes E_{ij}, \quad E_{ij} \otimes E_{is} E_{sj}.
\end{equation}
Therefore the term that needs to be calculated can be expressed as a sum of four types of terms:
\begin{multline*}
q^{2 j+2} q^{2 E_{i i}} E_{j i} \otimes q^{2 E_{j j}} E_{i j}+\left(q-q^{-1}\right) \sum_{s=j+1}^{n} q^{2 s+1} q^{2 E_{i i}} E_{j i} \otimes q^{E_{s s}} E_{i s} q^{E_{j j}} E_{s j} \\ +\left(q-q^{-1}\right) q^{2 j+2} \sum_{r=1}^{i-1} q^{E_{i i}} E_{r i} q^{E_{r r}} E_{j r} \otimes q^{2 E_{j j}} E_{i j}\\
+\left(q-q^{-1}\right)^{2} \sum_{r=1}^{i-1} \sum_{s=j+1}^{n} q^{2 s+1} q^{E_{i i}} E_{r i} q^{E_{r r}} E_{j r} \otimes q^{E_{s s}} E_{i s} q^{E_{j j}} E_{s j}.
\end{multline*}
At this point, the right jumps \eqref{RightJumps} can be found with a similar calculation, which is how \cite{Kuan-IMRN} proceeded with the proof. Actually, with the $*$--bialgebra structure, the right jumps can be found much more quickly.  Since  (see Remark \ref{CentralExplanation}) $\Delta(C_1) = \Delta(C_1^*)=\Delta(C_1)^*$, this implies that
$$
\langle \Delta(C_1) (v_{\mu_{ i \rightarrow j}} \otimes v_{\lambda_{j \rightarrow i}}), v_{\mu}\otimes v_{\lambda} \rangle = \langle v_{\mu_{i \rightarrow j}} \otimes v_{\lambda_{j \rightarrow i}}, \Delta(C_1)(v_{\mu} \otimes v_{\lambda})\rangle ,
$$
which means that
\begin{align*}
&h^{x, x+1}\left((\mu, \lambda), \left(\mu_{i \rightarrow j}, \lambda_{j \rightarrow i}\right)\right) {   \Vert v_{\mu}\otimes v_{\lambda}\Vert^2}  \\& = h^{x, x+1}\left(\left(\mu_{i \rightarrow j}, \lambda_{j \rightarrow i}\right),(\mu, \lambda)\right) { \Vert v_{\mu_{i\rightarrow j}} \otimes v_{\lambda_{j \rightarrow i}} \Vert^2},  
\end{align*}
so thus
\begin{multline*}
h^{x, x+1}\left((\mu, \lambda), \left(\mu_{j \rightarrow i}, \lambda_{i \rightarrow j}\right)\right) 
= q^{-2} q^{\mu_{i}+\mu_{i+1}+\cdots+\mu_{j-1}}\left\{\mu_{i}\right\}_{q^{2}} q^{\lambda_{i+1}+\cdots+\lambda_{j}}\left\{\lambda_{j}\right\}_{q^{2}} \\ \times q^{2\left(\lambda_{j+1}+\ldots+\lambda_{n}\right)}  q^{2\left(\mu_{1}+\ldots+\mu_{i-1}\right)} 
 \frac{\{\mu_{j}+1\}_{q^2}}{  \{\mu_i\}_{q^2}  }    \frac{ \{\lambda_i +1\}_{q^2}}{ \{\lambda_j\}_{q^2}} .
\end{multline*}
Setting $\hat{\mu}= \mu_{j\rightarrow i}, \hat{\lambda}=\lambda_{i\rightarrow j}$, so that $\mu_i = \hat{\mu}_i+1,\lambda_j = \hat{\lambda}_j+1$ shows that \eqref{RightJumps} holds.

The remaining steps with the ground state transformation $G$ are identical to \cite{Kuan-IMRN}, and will not be repeated here.

\end{proof}

\subsection{Identifying the unitary symmetry from the $*$--bialgebra structure }
Given the one-to-one link between self-duality functions and the corresponding symmetries as shown in Section \ref{markovduality}, it is natural to ask which is the symmetry associated to the orthogonal self-duality function of the multi--species ASEP$(q,\boldsymbol\theta)$. Since the aimed symmetry, introduced in the next lemma, is associated to an orthogonal polynomial then we know it must be a unitary operator as the norm of the cheap self-duality function is preserved. 
For any $X ,Y \in \mathcal{U}_q(\mathfrak{gl}_{n+1})$, 
$$
\left(e_{q^2}(X)\mathcal{E}_{q^2}(Y)\right)^{-1} = e_{q^2}(-Y)\mathcal{E}_{q^2}(-X),
$$
because the $e$ and the  $\mathcal{E}$ are inverses (see \eqref{Inverses}), while
$$
\left(e_{q^2}(X)\mathcal{E}_{q^2}(Y)\right)^{*} = \mathcal{E}_{q^2}(Y^*)e_{q^2}(X^*),
$$
therefore
\begin{align*}
\left(e_{q^2}(X)\mathcal{E}_{q^2}(Y)\right)^{-1} & = \left(e_{q^2}(X)\mathcal{E}_{q^2}(Y)\right)^{*}  \\ & \Longleftrightarrow e_{q^2}(-Y^*) e_{q^2}(-Y) =  e_{q^2}(X^*)e_{q^2}(X).
\end{align*}

\begin{lemma}
Define\footnote{Here, $e_{q^{1/2}}^{1/2}(\ldots)$ refers to an element whose square is $e_{q^{1/2}}(\ldots),$ which is well--defined as long as $(\ldots)$ is in the subalgebra generated by $\{q^{\pm E_{ii}}\}_i$, since this subalgebra acts as diagonal elements on any finite--dimensional module. This element also be defined to be in the quantum group $\mathcal{U}[[h]]$, which is the completion of $\mathcal{U}$ in the $h$--adic topology, but for reasons of space we do not define that object in this paper. } for all $i=0, \ldots, n-1$ the element $U_i(\lambda)$ by 
\begin{multline*}
U_i(\lambda):=
 {e}_{q^2}^{1/2}\left( -\gamma \lambda q^{2E_{ii}}\right) 
{e}_{q^2} \left(\gamma(1-q^2)(q-q^{-1})E_{i+1,i}q^{E_{ii}}\right) \\
\times
\mathcal{E}_{q^2}(-\lambda q^{E_{i+1,i+1}}E_{i,i+1}) 
 \mathcal{E}_{q^2}^{1 / 2}\left( \gamma \lambda(1-q^2){q^{2E_{i+1,i+1}}}  \right)   ,
\end{multline*}
where $\lambda = \gamma(1-q^2)(q-q^{-1})$.
Then $U_i(\lambda)$ is unitary, in other words $U_i(\lambda)^*=U_i(\lambda)^{-1}$.

\end{lemma}
\begin{proof}
This is modified from the proof in \cite{GPVYZ16} to avoid $q$--oscillator algebras and be more applicable to other semi-simple Lie algebras.

Substituting the value of $\lambda$, we have that 
\begin{multline*}
U_i^*=\mathcal{E}_{q^2}^{1 / 2}\left( \gamma \lambda(1-q^2){q^{2E_{i+1,i+1}}}  \right)    \mathcal{E}_{q^2} \left(-\lambda q^{E_{i+1,i+1}}E_{i,i+1}    \right)  ^*  \\\times
e_{q^2}(\lambda E_{i+1,i} q^{E_{ii}}  ) ^*
  e_{q^2}^{1/2}\left( -\gamma \lambda q^{2E_{ii}}\right) 
\end{multline*}
and
\begin{multline*}
U_i^{-1}= e_{q^2}^{1 / 2}\left(- \gamma \lambda(1-q^2){q^{2E_{i+1,i+1}}}  \right)   {e}_{q^2}(\lambda q^{E_{i+1,i+1}}E_{i,i+1})  \\
\times\mathcal{E}_{q^2}
\left(-\gamma(1-q^2)(q-q^{-1})E_{i+1,i}q^{E_{ii}}  \right) \mathcal{E}_{q^2}^{1/2}\left( \gamma \lambda q^{2E_{ii}}\right) .
\end{multline*}
Therefore the equality $U^*=U^{-1}$ is equivalent to 
\begin{align*}
& \mathcal{E}_{q^2} \left(-\gamma(1-q^2)(q-q^{-1}) q^{E_{i+1,i+1}}E_{i,i+1}    \right)  ^*   e_{q^2}(\lambda E_{i+1,i} q^{E_{ii}}  ) ^*
e_{q^2}\left( -\gamma \lambda q^{2E_{ii}}\right)
\\ &
  =e_{q^2}\left(- \gamma \lambda(1-q^2){q^{2E_{i+1,i+1}}} \right)  {e}_{q^2}(\lambda q^{E_{i+1,i+1}}E_{i,i+1}) \\
 &\quad\times\mathcal{E}_{q^2}
\left(-\gamma(1-q^2)(q-q^{-1})E_{i+1,i}q^{E_{ii}}\right).
\end{align*}
By the $*$--structure on $\mathcal{U}_q(\mathfrak{gl}_{n+1})$, this is equivalent to 
\begin{align*}
& \mathcal{E}_{q^2} \left(-\gamma(1-q^2)(q-q^{-1}) E_{i+1,i} q^{E_{ii}}  \right)     e_{q^2}(\lambda q^{E_{i+1,i+1}}E_{i,i+1}) 
e_{q^2}\left( -\gamma \lambda q^{2E_{ii}}\right)
\\
  & = e_{q^2}\left(- \gamma \lambda(1-q^2){q^{2E_{i+1,i+1}}}) \right){e}_{q^2}(\lambda q^{E_{i+1,i+1}}E_{i,i+1}) \\&\quad\times\mathcal{E}_{q^2}
\left(-\gamma(1-q^2)(q-q^{-1})E_{i+1,i}q^{E_{ii}}\right).
\end{align*}
Rewriting the identity solely in terms of the $q$--exponentials $e_{q^2}$, it suffices to prove
\begin{align*}
&e_{q^2}(\lambda q^{E_{i+1,i+1}}E_{i,i+1}) 
e_{q^2}\left( -\gamma \lambda q^{2E_{ii}}\right) 
e_{q^2}
\left(\gamma(1-q^2)(q-q^{-1})E_{i+1,i}q^{E_{ii}}\right)  \\
& =e_{q^2} \left(\gamma(1-q^2)(q-q^{-1}) E_{i+1,i} q^{E_{ii}}  \right)e_{q^2}\left(- \gamma \lambda(1-q^2){q^{2E_{i+1,i+1}}}) \right)\\
&\quad\times{e}_{q^2}(\lambda q^{E_{i+1,i+1}}E_{i,i+1}).
\end{align*}
As pointed out in section 2.3 of \cite{GPVYZ16}, the $q$--BCH equations imply
\begin{align*}
e_{q}\left(\alpha A_{-} B_{+}\right) e_{q}\left(\frac{\alpha \beta}{(1-q)^{2}} q^{B_{0}}\right) e_{q}\left(\beta A_{+} B_{-}\right) =   \\ e_{q}\left(\beta A_{+} B_{-}\right) e_{q}\left(\frac{\alpha \beta}{(1-q)^{2}} q^{A_{0}}\right) e_{q}\left(\alpha A_{-} B_{+}\right)
\end{align*}
for the little $q-$exponential, while for the  big $q-$exponential
\begin{align*}
\mathcal{E}_{q}\left(\gamma A_{+} B_{-}\right) \mathcal{E}_{q}\left(-\frac{\gamma \delta}{(1-q)^{2}} q^{B_{0}}\right) \mathcal{E}_{q}\left(\delta A_{-} B_{+}\right) =  \\  \mathcal{E}_{q}\left(\delta A_{-} B_{+}\right) \mathcal{E}_{q}\left(-\frac{\gamma \delta}{(1-q)^{2}} q^{A_{0}}\right) \mathcal{E}_{q}\left(\gamma A_{+} B_{-}\right)\
\end{align*}
for any $\alpha,\beta,\gamma,\delta$. Above $\left\lbrace A_0, A_-, A_+ \right\rbrace $ and $\left\lbrace B_0, B_-, B_+ \right\rbrace $ are two mutually commuting sets of generators for the $q$-oscillator algebra. Their proof uses the Schwinger model for $\mathcal{U}_q(\mathfrak{sl}_2)$, where the generators are written in terms of two copies of the $q$--oscillator algebra. In order to avoid needlessly introducing additional algebraic structure concerning the $q$--oscillator algebra, and because this paper uses a different $*$--structure, the calculation will be repeated here.
We have that
\begin{align}\label{FirstEqu}
  & e_{q^2}(\lambda q^{E_{i+1,i+1}}E_{i,i+1}) E_{i+1,i} q^{E_{ii}} \mathcal{E}_{q^2}(-\lambda q^{E_{i+1,i+1}}E_{i,i+1})  \nonumber \\
  &  =E_{i+1,i} q^{E_{ii}}  - \lambda \frac{q^{2E_{i+1,i+1}}}{q-q^{-1}}    
+ \sum_{n=0}^{\infty} \lambda^{n+1}q^{n(n-5)/2  } \frac{{   q^{nE_{i+1,i+1}} q^{2E_{ii}} E_{i,i+1}^{n}  }  }   {(1-q^2)(q-q^{-1})}, 
\end{align}
\begin{align}
 & e_{q^2}(\lambda q^{E_{i+1,i+1}}E_{i,i+1}) q^{2E_{ii}}  \mathcal{E}_{q^2}(-\lambda q^{E_{i+1,i+1}}E_{i,i+1})    \nonumber \\ &=\sum_{n=0}^{\infty} \lambda^n q^{n(n-5)/2  } {   q^{nE_{i+1,i+1}} q^{2E_{ii}} E_{i,i+1}^{n}  } \label{SecondEqu}.  
\end{align}

To see equation \eqref{FirstEqu}, first calculate, using \eqref{SecondEq} with $X=E_{i+1,i}$ and $Y=E_{i,i+1}$,
\begin{align*}
[q^{E_{i+1,i+1}}E_{i,i+1}   ,E_{i+1,i} q^{E_{ii}} ]'_0 &= E_{i+1,i} q^{E_{ii}}  ,\\
[q^{E_{i+1,i+1}}E_{i,i+1},E_{i+1,i} q^{E_{ii}} ]'_1 &=  \frac{q^{2E_{ii}}-q^{2E_{i+1,i+1}}}{q-q^{-1}}, 
\end{align*}
\begin{multline*}
  [q^{E_{i+1,i+1}}E_{i,i+1},E_{i+1,i} q^{E_{ii}} ]'_n
  \\=  \frac{   q^{(n-1)E_{i+1,i+1}} q^{2E_{ii}} E_{i,i+1}^{n-1}(q^{-2}-q^2)(q^{-1}-q^5) \cdots (q^{n-4}-q^{3n-4})}{q-q^{-1}}, \\
=  \frac{   q^{-2-1+0+1+\ldots+(n-4)}q^{(n-1)E_{i+1,i+1}} q^{2E_{ii}} E_{i,i+1}^{n-1} (q^2;q^2)_{n} } {(1-q^2)(q-q^{-1})}  , \quad n \geq 2.  
\end{multline*}
Since $-2+(-1)+0+1+\ldots+(n-3)=(n-2)(n-3)/2-3 = n^2/2-5n/2$, therefore \eqref{FirstEqu} holds. To see  \eqref{SecondEqu}, again calculate 
\begin{align*}
[q^{E_{i+1,i+1}}E_{i,i+1},  q^{2E_{ii}}   ]'_0 &= q^{2E_{ii}} ,\\
[q^{E_{i+1,i+1}}E_{i,i+1}, q^{2E_{ii}}]_1' &= (q^{-2}-1)q^{E_{i+1,i+1}}q^{2E_{ii}}E_{i,i+1}  , 
\end{align*}
and when $n\ge 2$
\begin{align*}
[q^{E_{i+1,i+1}}E_{i,i+1},  q^{2E_{ii}}]_n'&=  (q^{-2}-1)(q^{-1} - q^3  )(1-q^6)\cdots (q^{n-3}-q^{3n-3})  \\ & \quad\times q^{nE_{i+1,i+1}}q^{2E_{ii}}E_{i,i+1}^n \\
&=q^{-2}q^{-1}q^0\cdots q^{n-3} (q^2;q^2)_n q^{nE_{i+1,i+1}}q^{2E_{ii}}E_{i,i+1}^n.
\end{align*}
Note that \eqref{SecondEqu}  can also be calculated by first noting that
\begin{align*}
[q^{E_{i+1,i+1}}E_{i,i+1},  q^{-2E_{ii}}   ]'_0 &= q^{-2E_{ii}} ,\\
[q^{E_{i+1,i+1}}E_{i,i+1}, q^{-2E_{ii}}]_1' &= (q^{2}-1)q^{E_{i+1,i+1}}q^{-2E_{ii}}E_{i,i+1}  ,  \\
[q^{E_{i+1,i+1}}E_{i,i+1},  q^{2E_{ii}}]_n'&=  0, \quad  \quad n \geq 2.
\end{align*}
and then recalling that the symmetry is in fact an automorphism, results in the formal series
\begin{align*}
 &\left( e_{q^2}(\lambda q^{E_{i+1,i+1}}E_{i,i+1})q^{-2E_{ii}}  \mathcal{E}_{q^2}(-\lambda q^{E_{i+1,i+1}}E_{i,i+1}) \right)^{-1}  \\
 &= \left(q^{-2E_{ii}}(1 -\lambda q^{E_{i+1,i+1}}E_{i,i+1})\right)^{-1}\\
&= \sum_{n=0}^{\infty}\left( \lambda q^{E_{i+1,i+1}}E_{i,i+1}\right)^nq^{2E_{ii}} \\
&= \sum_{n=0}^{\infty} \lambda^nq^{2E_{ii}} q^{-2n} q^{n(n-1)/2}  q^{nE_{i+1,i+1}}E_{i,i+1}^n.   
\end{align*}

So combining \eqref{FirstEqu} and \eqref{SecondEqu}, and applying them to the element 
\newline 
$(1-q^2)(q-q^{-1})E_{i+1,i}q^{E_{ii}} - \lambda q^{2E_{ii}}$, all $\lambda^n$ terms cancel for $n\geq 2$, leaving only:
\begin{multline*}
e_{q^2}(\lambda q^{E_{i+1,i+1}}E_{i,i+1}) \left((1-q^2)(q-q^{-1})E_{i+1,i}q^{E_{ii}} - \lambda q^{2E_{ii}}\right) \\
\times\mathcal{E}_{q^2}(\lambda q^{E_{i+1,i+1}}E_{i,i+1}) 
=  \left(E_{i+1,i} q^{E_{ii}}  - \lambda\frac{q^{2E_{i+1,i+1}}}{q-q^{-1}}  \right) (1-q^2)(q-q^{-1}),
\end{multline*}
implying that
\begin{multline*}
e_{q^2}(\lambda q^{E_{i+1,i+1}}E_{i,i+1}) 
e_{q^2}
\left(\gamma(1-q^2)(q-q^{-1})E_{i+1,i}q^{E_{ii}} -\gamma \lambda q^{2E_{ii}}\right) 
\\
\times\mathcal{E}_{q^2}(\lambda q^{E_{i+1,i+1}}E_{i,i+1}) \\=  e_{q^2} \left(\gamma(1-q^2)(q-q^{-1}) E_{i+1,i} q^{E_{ii}}  - \gamma\lambda(1-q^2){q^{2E_{i+1,i+1}}}) \right) .
\end{multline*}
By \eqref{EFactor},
\begin{multline*}
e_{q^2}(\lambda q^{E_{i+1,i+1}}E_{i,i+1}) 
 e_{q^2}\left( -\gamma \lambda q^{2E_{ii}}\right) 
\\
\times e_{q^2}\left(\gamma(1-q^2)(q-q^{-1})E_{i+1,i}q^{E_{ii}}\right) 
\mathcal{E}_{q^2}(\lambda q^{E_{i+1,i+1}}E_{i,i+1}) \\
=  e_{q^2} \left(\gamma(1-q^2)(q-q^{-1}) E_{i+1,i} q^{E_{ii}}  \right)e_{q^2}\left(- \gamma \lambda(1-q^2){q^{2E_{i+1,i+1}}}) \right) ,
\end{multline*}
which implies the desired identity.

\end{proof}
As proven in \cite{Kuan-IMRN}, the diagonal matrix $G^2B^{-2}$ encodes reversible measures for the multi--species inhomogeneous ASEP$(q,\boldsymbol\theta)$. Note that the $B$ in \cite{Kuan-IMRN} is the same matrix $B$ in the current paper. There is likely an algebraic reason for this. By defining a diagonal matrix $A$ which is constant on irreducible components of the interacting particle system, one can obtain a family of reversible measures given by $AG^2B^{-2}$. 

The next theorem generalizes (in a sense) Theorem 3.2 and equation (95) of \cite{CFG21}.
\begin{theorem}\label{AlgThm}
Define $U(\boldsymbol\lambda):=U_{n-1}(\lambda_{n-1})\cdots U_0(\lambda_0)$. Then

(i)
The function $D$ given by 
$$
D:= G^{-1} M_{U(\boldsymbol\lambda)} G^{-1}B^2
$$
is a self--duality function, and the entries are orthogonal with respect to the reversible measures, in the sense that
$$
D^T G^2 B^{-2}D = G^{-2}B^2.
$$
(ii) More generally, let $A$ be a quantity which is conserved under the dynamics; in other words, $A$ is a diagonal matrix such that $LA=AL$. Then the function $D_A$ given by 
$$
D_A := AG^{-1}M_{U(\boldsymbol\lambda)}G^{-1}B^2A
$$
is a self--duality function, and its entries are orthogonal with respect to the reversible measures, in the sense that
$$
D_A^T A^{-2}G^2B^{-2} D_A = A^2G^{-2}B^2.
$$
\end{theorem}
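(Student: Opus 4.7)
Both statements reduce to bookkeeping with diagonal and transpose operations once three observations are recorded. First, $U(\lambda)$ lies in (a completion of) $\mathcal{U}_q(\mathfrak{gl}_{n+1})$ and $C_1$ is central, so $U(\lambda) C_1 = C_1 U(\lambda)$; applying $\Delta^{L-1}$ transports this identity to the tensor representation (the $q$-exponentials terminate as polynomials on the finite-dimensional $V_{\theta_x}^{(n)}$, so no convergence issue arises), giving $M_{U(\lambda)} H = H M_{U(\lambda)}$ with $H := \Delta^{L-1}(C_1)$. Second, Lemma \ref{FirstLemma}(c) rearranged reads $M_X^T = B^{-2} M_{X^*} B^2$ for every $X$; specialized to the unitary $U(\lambda)$ this becomes $M_{U(\lambda)}^T = B^{-2} M_{U(\lambda)^{-1}} B^2$. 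Third, the cheap self-duality function of Section \ref{markovduality} is $D^{\text{ch}} = V^{-1} = G^{-2}B^2$, where $V = G^2 B^{-2}$ records the reversible measures.

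\textbf{Part (i).} The symmetry $M_{U(\lambda)}$ of $H$ conjugates to a symmetry $\tilde{M} := G^{-1} M_{U(\lambda)} G$ of $L = G^{-1} H G$, and the symmetry-to-duality principle of Section \ref{markovduality} gives that
\begin{equation*}
\tilde{M} \, D^{\text{ch}} \;=\; G^{-1} M_{U(\lambda)} G \cdot G^{-2} B^2 \;=\; G^{-1} M_{U(\lambda)} G^{-1} B^2 \;=\; D
\end{equation*}
is a self-duality function. For orthogonality, transposition yields $D^T = B^2 G^{-1} M_{U(\lambda)}^T G^{-1}$, and since all diagonal factors commute,
\begin{equation*}
D^T V D \;=\; B^2 G^{-1} M_{U(\lambda)}^T \bigl(G^{-1} G^2 B^{-2} G^{-1}\bigr) M_{U(\lambda)} G^{-1} B^2 \;=\; B^2 G^{-1} M_{U(\lambda)}^T B^{-2} M_{U(\lambda)} G^{-1} B^2.
\end{equation*}
Substituting $M_{U(\lambda)}^T = B^{-2} M_{U(\lambda)^{-1}} B^2$ collapses the middle into $M_{U(\lambda)^{-1}} M_{U(\lambda)} = I$, and the leftover diagonal matrices multiply to $G^{-2} B^2 = V^{-1}$, as required.

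\textbf{Part (ii).} Diagonality of $A$ forces $A$ to commute with each of $G, B, V, V^{-1}$, and $LA = AL$ implies $L^T A = A L^T$. Writing $D_A = A D A$, duality follows immediately:
\begin{equation*}
L^T D_A \;=\; A L^T D A \;=\; A D L A \;=\; D_A L.
\end{equation*}
Orthogonality is the analogous one-line computation
\begin{equation*}
D_A^T A^{-2} V D_A \;=\; A D^T A \cdot A^{-2} V \cdot A D A \;=\; A (D^T V D) A \;=\; A V^{-1} A \;=\; A^2 G^{-2} B^2.
\end{equation*}

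\textbf{Main obstacle.} All the substantive work has already been done in the preceding results: constructing the $^*$-invariant inner product on $V_m^{(n)}$ so that Lemma \ref{FirstLemma}(c) holds, and producing the element $U(\lambda)$ that is simultaneously built from the generators (hence commutes with $C_1$) and unitary (verified by the $q$-Baker-Campbell-Hausdorff identities). Once those two ingredients are in place, Theorem \ref{AlgThm} is a purely formal consequence of the symmetry-to-duality correspondence and the compatibility between matrix transpose and the $^*$-involution.
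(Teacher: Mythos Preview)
Your proof is correct and follows essentially the same route as the paper's: both derive self-duality from the symmetry-to-duality correspondence (the paper defers to the framework of \cite{Kuan-IMRN}, you spell out $D = \tilde{M}D^{\text{ch}}$ explicitly), and both establish orthogonality by the identical manipulation $D^T G^2 B^{-2} D = B^2 G^{-1} M_{U(\lambda)}^T B^{-2} M_{U(\lambda)} G^{-1} B^2$ followed by Lemma \ref{FirstLemma}(c) for the unitary $U(\lambda)$. Your Strategy paragraph making explicit why $M_{U(\lambda)}$ commutes with $H$ (centrality of $C_1$ plus $\Delta^{L-1}$ being an algebra map) is a useful addition that the paper leaves implicit.
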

\begin{proof}
(i)
The fact that $D$ is a self--duality function follows from the general framework of \cite{Kuan-IMRN}.

To see the second statement, note first that 
\begin{align*}
D^T G^2 B^{-2}D &= B^2 G^{-1} M_{U(\boldsymbol\lambda)}^T G^{-1} G^2 B^{-2}G^{-1}M_{U(\boldsymbol\lambda)}G^{-1}B^2\\
&= B^2 G^{-1} M_{U(\boldsymbol\lambda)}^T B^{-2} M_{U(\boldsymbol\lambda)}G^{-1}B^2.
\end{align*}
Since $U(\boldsymbol\lambda)$ is unitary, then by part (c) of Lemma \ref{FirstLemma},
$$
D^T G^2 B^{-2}D =  B^2 G^{-1} B^{-2} M_{U(\boldsymbol\lambda)}^{-1}M_{U(\boldsymbol\lambda)} G^{-1}B^2,
$$
which simplifies to $G^{-2}B^2$, as needed. 

(ii) Noting that $D_A=ADA$ where $D$ is from part (i), we thus have
$$
LD_A = LADA = ALDA = ADL^TA = ADA L^T = D_AL^T,
$$
so $D_A$ is also a self--duality function.

Since $D^TG^2B^{-2}D = G^{-2}B^2$, we have
$$
D_A^T A^{-2}G^2B^{-2} D_A = AD^TA^T A^{-2}G^2B^{-2} ADA = AG^{-2}B^2A = A^2G^{-2}B^2.
$$
\end{proof}

\begin{remark}\label{InnerProductRemark}
From the definition of the unitary element $U(\boldsymbol{\lambda})$, it can be seen why the inner product method does not work for multi--species ($n>1$) models. For $n=2$, this unitary element has the form
\begin{align*}
&e_{q^2}^{1/2}(-\gamma \lambda_1 q^{2E_{11}}) e_{q^2}(\gamma (1-q^2)(q-q^{-1})E_{21}q^{E_{11}})\\
&\times\mathcal{E}_{q^2}(-\lambda_1 q^{E_{22}}E_{12}) \mathcal{E}_{q^2}^{1/2}(\gamma \lambda (1-q^2)q^{2E_{22}})\\
&\times e_{q^2}^{1/2}(-\gamma \lambda_0 q^{2E_{00}}) e_{q^2}(\gamma (1-q^2)(q-q^{-1})E_{10}q^{E_{00}}) \\
&\times\mathcal{E}_{q^2}(-\lambda_1 q^{E_{11}}E_{01}) \mathcal{E}_{q^2}^{1/2}(\gamma \lambda (1-q^2)q^{2E_{11}}).
\end{align*}
Even ignoring the $q^{2E_{11}}$ and $q^{2E_{00}}$ terms, as well as the constants, the linear terms in the $q$--exponentials yield
$$
E_{21}E_{12}E_{10}E_{01}.
$$
Meanwhile, the inner product method would result in linear terms of the form
$$
E_{21}E_{10}E_{01}E_{12}.
$$
However, there is no quantum group relationship between $E_{12}$ and $E_{01}$, so the inner product method will not produce the appropriate unitary element in the multi--species case.    
\end{remark}

\subsection{Proof of Theorem \ref{FirstThm}: 
Calculation of Orthogonal Duality Functions}
With the algebraic machinery, the explicit form of the orthogonal duality functions can now be calculated. The duality function is given by Theorem \ref{AlgThm}, and is written as:
$$
D_A:= AG^{-1}M_{U(\boldsymbol\lambda)}G^{-1}B^2A
 $$
 and it satisfies
 $$
D_A^T A^{-2}G^2B^{-2} D_A = A^2G^{-2}B^2.
$$
Recall that 
\begin{multline*}
   U_i(\lambda):=
 {e}_{q^2}^{1/2}\left( -\gamma \lambda q^{2E_{ii}}\right) 
{e}_{q^2} \left(\lambda E_{i+1,i}q^{E_{ii}}\right) \\
\times\mathcal{E}_{q^2}(-\lambda q^{E_{i+1,i+1}}E_{i,i+1}) 
 \mathcal{E}_{q^2}^{1 / 2}\left( \gamma \lambda(1-q^2){q^{2E_{i+1,i+1}}}  \right)   , 
\end{multline*}
 $G$ is the diagonal matrix with entries
\begin{equation*}
\prod_{i=0}^{n} \prod_{x=1}^{L} \frac{1}{\left[\eta_{i}^{x}\right]_{q}^{!}} \times \prod_{1 \leq y<x \leq L} \prod_{i=0}^{n-1} q^{-\eta_{i+1}^{y} \eta_{[1, i]}^{x}}
\end{equation*}
and $B$ is the diagonal matrix with entries with entries 
$$B(\lambda,\mu) = 1_{\{\lambda=\mu\}} \frac{1}{\sqrt{\{\mu_0\}_{q^2}^! \cdots \{\mu_n\}_{q^2}^!}}.$$

\subsubsection{Inhomogeneous, single--species case}
First, briefly note that the entries of $e_{q^2}^{1/2}$ and $\mathcal{E}_{q^2}^{1/2}$ terms are actually constants in the duality function  (up to particle conservation) and can be ignored.

The next step is to show that in the case of a fixed $i$, the duality function is an inhomogeneous version of the one from \cite{CFG21}.  Doing so is essentially matching notation from this paper to the notation in \cite{CFG21}, rather than developing new proofs. That previous paper \cite{CFG21} uses generators of $\mathcal{U}_q(\mathfrak{sl}_2)$:
$$
\begin{array}{l}q^{A^{0}} A^{+}=q A^{+} q^{A^{0}} \\ q^{A^{0}} A^{-}=q^{-1} A^{-} q^{A^{0}} \\ {\left[A^{+}, A^{-}\right]=\left[2 A^{0}\right]_{q}}\end{array}
$$
Additionally, the representation
 $$
\begin{aligned} A^{+}\vert n\rangle &=\sqrt{[\theta-n]_{q}[n+1]_{q}}\vert n+1\rangle \\ A^{-}\vert n\rangle &=\sqrt{[n]_{q}[\theta-n+1]_{q}}\vert n-1\rangle \\ A^{0}\vert n\rangle &=(n-\theta / 2)\vert n\rangle \end{aligned}
$$
needs to be compared with
\begin{align*}
E_{i,i+1} v_{\mu} &= [\mu_{i+1}]_q v_{ \mu + \epsilon_i}, \quad 0 \leq i < n \\
E_{i+1,i}v_{\mu}  &= [\mu_{i}]_q v_{\mu - \epsilon_i} ,\quad 0 \leq i < n \\
q^{E_{ii}} v_{\mu} &= q^{\mu_i} v_{\mu},\quad 0 \leq i \leq n.
\end{align*}
Now, note that
$$
A^-A^+ \vert n \rangle = \sqrt{[\theta-n]_{q}[n+1]_{q}} A^- \vert n+1\rangle = {[\theta-n]_{q}[n+1]_{q}} \vert n\rangle,
$$
while in the one--species case
$$
E_{01}E_{10} v_{\mu} = [\mu_0]_q [\mu_1+1]_q v_{\mu}.
$$
Thus it is natural to replace $A^+$ with $E_{10}$ and $A^-$ with $E_{01}$.

In the notation of section 8.3 from \cite{CFG21}, there are the operators
\begin{equation*}
S_{\alpha}^{}=e_{q^{2}}\left(\sqrt{\alpha}\left(1-q^{2}\right) \Delta^{L-1}\left(q^{A^{0}} A^{+}\right)\right)
\end{equation*}
and
\begin{equation*}
\hat{S}_{\alpha}^{}=\mathcal{E}_{q^{2}}\left(\sqrt{\alpha}\left(1-q^{2}\right) q^{-\theta/2-\theta L}\Delta^{L-1}\left(q^{-A^{0}} A^{+}\right)\right).
\end{equation*}
 However, the present paper does not use $q^{A_0}$ because it is not an element of the quantum group. In the notation of this paper, $q^{A_0}$ is the analog of $q^{(E_{00}-E_{11})/2}$. Because the element $q^{E_{00}+E_{11}}$ is central,  inserting it into the $q$--exponentials will introduce an irrelevant constant, in the sense that the constant is preserved under the dynamics. The constant $q^{-\theta/2-\theta L}$ is similarly meaningless. Therefore, the operators can be replaced with 
\begin{equation*}
\mathfrak{S}_{\alpha}^{}=e_{q^{2}}\left(\sqrt{\alpha}\left(1-q^{2}\right) \Delta^{L-1}\left( E_{10}q^{E_{00}}\right)\right)
\end{equation*}
and
\begin{equation*}
\hat{\mathfrak{S}}_{\alpha}^{}=\mathcal{E}_{q^{2}}\left(\sqrt{\alpha}\left(1-q^{2}\right) \Delta^{L-1}\left(q^{E_{11}} E_{10}\right)\right),
\end{equation*}
where we have additionally used the relation $q^{E_{00}}E_{10}= q^{-1}E_{10}q^{E_{00}}$ and that $q^{-1}$ only contributes a constant.

The matrix entries of $\mathfrak{S}_{\alpha}$ and $\hat{\mathfrak{S}}_{\alpha}$ are explicitly calculated in the unnumbered equations before (99) in \cite{CFG21}, and produce the single--species duality function. Therefore, for the value of $\lambda = \sqrt{\alpha}(1-q^2)$, we have proved the theorem in the single--species case. 

\subsubsection{Multi--species case}

In the multi--species case, the parameter $\lambda$ in $U_i(\lambda)$ is inhomogeneous and chosen so that $\lambda_i = \sqrt{\alpha_i}(1-q^2)$. 
Then the duality function is not simply a product of the single--species duality functions. However, it is ``almost'' a product of single--species duality functions, in the sense that 
$$
\mathcal{D}^{\boldsymbol\theta}_{\boldsymbol{\alpha}}(\boldsymbol\xi, \boldsymbol\eta) = \mathcal{G}_{\boldsymbol{\alpha}}(\boldsymbol\xi,\boldsymbol\eta)\prod_{i=0}^{n-1}D^{  \boldsymbol{\theta}^{(i)} }_{\alpha_i}(\boldsymbol\xi_i, \boldsymbol\zeta^{(i)}_i),
$$
where $\mathcal{G}_{\boldsymbol{\alpha}}$ was defined in \eqref{curlyG}. This follows because the unitary symmetry in the multi--species case, as defined at the beginning of Theorem \ref{AlgThm}, is a product of the symmetries in the single--species case, but with diagonal matrices in between terms of this product. Thus, to complete the proof, it only remains to compute the term $\mathcal{C}_{\boldsymbol{\alpha}}(\boldsymbol\xi, \boldsymbol\eta)$.
From the general algebraic set--up in Theorem 4.5(ii), the duality functions $\mathcal{C}_{\boldsymbol{\alpha}}(\boldsymbol\xi, \boldsymbol\eta)\mathcal{D}^{\boldsymbol\theta}_{\boldsymbol{\alpha}}(\boldsymbol\xi, \boldsymbol\eta)$ are orthogonal with respect to the reversible measures defined by $A^2G^{-2}B^2$. Given that part (ii) of that Theorem has already been proven, it therefore suffices to show that
\begin{equation}\label{above}
A^2G^{-2}B^2(\boldsymbol{\xi},\boldsymbol{\xi}) = \mu^n(\boldsymbol{\xi}).
\end{equation}
Recall from \eqref{GB} that 
\begin{equation*}
G^2B^{-2}(\boldsymbol\xi,\boldsymbol\xi) = \sum_{\boldsymbol k} \frac{\mu_{\boldsymbol k}^n(\boldsymbol\xi)}{Z_{n,\boldsymbol k}},
\end{equation*}
where $Z_{n, \boldsymbol{k}}$ is a normalization constant whose explicit value is irrelevant. It turns out that for the choice of the diagonal matrix with entries
$$
A(\boldsymbol\xi, \boldsymbol\xi) = \sum_{\boldsymbol k} a_{\boldsymbol k} \cdot 1_{\left\{N\left(\boldsymbol\xi_{i}\right)=k_{i}, 0 \leq i \leq n\right\}},
$$
equation \eqref{above} holds.

\subsection{Proof of Theorem \ref{qTAZRP}: duality for the multi--species $q$--TAZRP}

First, note that the two expressions in the theorem are equal by equation (1.11.2) of \cite{KLS}, which is a $q-$analogue of Newton’s binomium
$$ 
{ }_{1} \phi_{0}\left(\begin{array}{c}q^{-n} \\ -\end{array} ;q, z\right)=\left(z q^{-n} ; q\right)_{n}, \quad n=0,1,2, \ldots
$$

\subsubsection{Charge--Parity Symmetry}
If the limit $\boldsymbol\theta\rightarrow \infty$ is taken in the duality function, the result is $0$. Indeed, in the totally asymmetric case, this must always be the case, as explained in \cite{Kuan-IMRN}, and also observed in \cite{CGRS} for triangular self-duality functions. But with charge parity, the duality result follows immediately if we can take the limit of the ASEP$(q,\boldsymbol\theta)$ duality function and show that it equals the function in the theorem (up to  constants under the dynamics), the result is proven. The remainder of this subsubsection is to that calculation.

First we consider the single species ASEP$(q,\boldsymbol\theta)$.
\begin{theorem}\label{thm: 4.6}
\begin{equation}
    \prod_{x \in \Lambda_L} {}_1\phi_0\left(q^{-2\xi^x};q^2,q^{-2\left(N_{x-1}^-(\boldsymbol\xi)+N_{x+1}^+(\boldsymbol\eta)\right)+1}\right)
\end{equation}
is a space reversed duality for q-TAZRP, where $\boldsymbol\eta$ evolves with total asymmetry to the left and $\boldsymbol\xi$ evolves with total asymmetry to the right.
\end{theorem}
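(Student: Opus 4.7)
The plan is to derive this single--species $q$--TAZRP duality as a degeneration of the single--species ASEP$(q,\theta)$ orthogonal self--duality of Theorem \ref{FirstThm}(a) (equivalently equation \eqref{eq:4}), by first applying the charge--parity involution $T$ to convert self--duality into space--reversed duality, and then passing to the $\theta^x\to\infty$ limit for all $x$. The equivalence of the two displayed expressions in the statement is already noted in the paper as a consequence of the $q$--binomial identity ${}_1\phi_0(q^{-n};-;q,z)=(zq^{-n};q)_n$, so it suffices to identify the ${}_1\phi_0$ form as the limit of the Krawtchouk factor.

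For the charge--parity step, recall from \cite{Kuan-IMRN} that $T$ is an involution satisfying $TL_\theta T^{-1}=L_\theta^{\text{rev}}$, where $L_\theta$ is the generator of ASEP$(q,\theta)$. Applying $T$ to the second argument of the self--duality $L_\theta^TD^{\bs\theta}_\alpha=D^{\bs\theta}_\alpha L_\theta$ yields a duality $L_\theta^T\widetilde{D}=\widetilde{D}L_\theta^{\text{rev}}$ for the transformed function $\widetilde{D}(\bs\xi,\bs\eta):=D^{\bs\theta}_\alpha(\bs\xi,T\bs\eta)$. Since $\widetilde{D}$ is still a product over lattice sites of $q$--Krawtchouk polynomials, now with shifted counts reflecting the swap of particles and holes, the identities $N^-_x(T\bs\eta)=N^-_x(\bs\theta)-N^-_x(\bs\eta)$ translate the combinatorial shifts in $p^x$ into the desired combinations $N_{x-1}^-(\bs\xi)+N_{x+1}^+(\bs\eta)$.

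Next I would take $\theta^x\to\infty$. On the dynamical side it is classical (see \cite{Take15}) that in this limit both right--drift ASEP$(q,\theta)$ and left--drift (space--reversed) ASEP$(q,\theta)$ converge to the right-- and left--drift $q$--TAZRP respectively. On the duality--function side, the key computation is the pointwise asymptotics of each Krawtchouk factor
$$
K_{\eta^x}\bigl(q^{-2\xi^x};p^x,\theta^x;q^2\bigr)={}_2\varphi_1\bigl(q^{-2\xi^x},q^{-2\eta^x};q^{-2\theta^x};q^2,p^xq^{2(\eta^x+1)}\bigr).
$$
Using $(q^{-2\theta^x};q^2)_k\sim(-1)^k q^{-2k\theta^x+k(k-1)}$ and choosing the free parameter $\alpha$ so that the product $\alpha^{-1}q^{2N_{x-1}^-(\bs\theta)-1}$ inside $p^x$ absorbs the divergent $q^{-2\theta^x}$ factor, the denominator $(q^{-2\theta^x};q^2)_k$ cancels against part of $(p^xq^{2(\eta^x+1)})^k$. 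What remains in the limit collapses to the series
$$
\sum_k\frac{(q^{-2\xi^x};q^2)_k}{(q^2;q^2)_k}\bigl(q^{-2(N_{x-1}^-(\bs\xi)+N_{x+1}^+(\bs\eta))+1}\bigr)^k = {}_1\phi_0\!\left(q^{-2\xi^x};-;q^2,q^{-2(N_{x-1}^-(\bs\xi)+N_{x+1}^+(\bs\eta))+1}\right),
$$
which is exactly the stated factor (the auxiliary $q^{-2\eta^x}$ in ${}_2\varphi_1$ cancels via the symmetry $(q^{-n};q)_k/(q;q)_k=\binom{n}{k}_q(-1)^kq^{-nk+k(k-1)/2}$, reducing ${}_2\varphi_0$ to the claimed ${}_1\phi_0$).

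The main obstacle is the bookkeeping of $\bs\theta$--dependent prefactors produced during the limit. These arise from the Krawtchouk asymptotics, from the $T$--transformation, and from the normalization constant in $p^x$. I would argue that all such prefactors depend only on $\bs\theta$ and conserved quantities of the dynamics (e.g.\ total particle counts), hence they can be factored out as multiplicative constants that are invariant under both the right-- and left--drift $q$--TAZRP generators. Dividing the limiting expression by this constant preserves the duality relation and yields precisely the product of ${}_1\phi_0$ factors in the theorem. Once this is done, the duality for $\bs\xi$ with right drift and $\bs\eta$ with left drift follows by continuity of the intertwining identity $L_\xi^T D = D L_\eta^{\text{rev}}$ along the $\bs\theta\to\infty$ limit.
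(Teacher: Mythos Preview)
Your overall strategy matches the paper's: apply the charge--parity involution $T$ to the second argument of the ASEP$(q,\bs\theta)$ self--duality, then send $\bs\theta\to\infty$. The gap is in the execution of the limit.

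After applying $T$, the Krawtchouk factor at site $x$ has degree $(T\bs\eta)^x=\theta^x-\eta^x$, not $\eta^x$. Hence the relevant ${}_2\varphi_1$ is
\[
{}_2\varphi_1\Bigl(q^{-2\xi^x},\,q^{-2(\theta^x-\eta^x)};\;q^{-2\theta^x};\;q^2,\;p^x(\bs\xi,\bs\theta-\bs\eta)\,q^{2(\theta^x-\eta^x+1)}\Bigr).
\]
The reduction to ${}_1\phi_0$ is \emph{not} driven by a cancellation between $p^x$ and the divergent denominator, as you propose. It is the ratio of the two divergent Pochhammers in the series term,
\[
\frac{(q^{-2(\theta^x-\eta^x)};q^2)_k}{(q^{-2\theta^x};q^2)_k}\ \longrightarrow\ q^{2k\eta^x}\qquad(\theta^x\to\infty),
\]
that does the work. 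With the choice $\alpha=q^{2N(\bs\theta)}$ the argument $p^x(\bs\xi,\bs\theta-\bs\eta)q^{2(\theta^x-\eta^x+1)}$ simplifies to the \emph{bounded} quantity $q^{-2(N_{x-1}^-(\bs\xi)+N_{x+1}^+(\bs\eta))-2\eta^x+1}$, and the limiting factor $q^{2k\eta^x}$ exactly removes the residual $q^{-2k\eta^x}$, yielding the ${}_1\phi_0$ directly with no leftover $\bs\theta$--dependent prefactors to manage.

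Your alternative mechanism---choosing $\alpha$ so that $p^x$ itself absorbs the $q^{-2\theta^x}$ divergence from $(q^{-2\theta^x};q^2)_k$---cannot work: a single global $\alpha$ cannot match the site--dependent growth required (the needed scaling would force $\alpha\sim q^{2N_x^-(\bs\theta)}$, which varies with $x$). Indeed, without the $T$--induced degree shift the paper notes that the limit of the duality function is identically $0$. The final remark about an auxiliary $q^{-2\eta^x}$ cancelling via a $q$--binomial identity to reduce a ${}_2\varphi_0$ is also off: no ${}_2\varphi_0$ appears, and the $\eta^x$--dependence in the argument disappears through the Pochhammer ratio above, not through any such symmetry.
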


\begin{proof}
Recall the orthogonality duality for ASEP in equation \eqref{eq:4} when $\alpha_i = \alpha$ for all lattice sites $i$, i.e. 
\begin{equation*}
    D_\alpha^{\boldsymbol\theta}(\boldsymbol\xi,\boldsymbol\eta)=\prod_{x=1}^L K_{\eta^x}\left(q^{-2\xi^x}, p^x(\boldsymbol\xi,\boldsymbol\eta),\theta^x,q^2\right)
\end{equation*}
and
\begin{equation*}
p^x(\boldsymbol\xi,\boldsymbol\eta)=\alpha^{-1}q^{-2\left(N_{x-1}^-(\boldsymbol\xi)-N_{x+1}^+(\boldsymbol\eta)\right)+2N_{x-1}^-(\boldsymbol\theta)-1},
\end{equation*}
where $\alpha$ satisfies $q^{2\theta^x}p^x(\boldsymbol\xi,\boldsymbol\eta)>1$ for all $x$, thus $0<\alpha<q^{2N(\boldsymbol\theta)-1}$.
\newline
Now apply $T$ to $\boldsymbol\eta$, 
\begin{equation}
\begin{array}{cl}
D_\alpha^{\boldsymbol\theta}\left(\boldsymbol\xi,T(\boldsymbol\eta)\right)  &  =\prod_{x=1}^L K_{\theta^x-\eta^x}\left(q^{-2\xi^x}, p^x(\boldsymbol\xi,\boldsymbol\theta-\boldsymbol\eta),\theta^x,q^2\right) \\
     &  =\prod_{x=1}^L {}_2\phi_1(q^{-2\xi^x},q^{-2(\theta^x-\eta^x)};q^{-2\theta^x};q^2,p^x(\boldsymbol\xi,\boldsymbol\theta-\boldsymbol\eta)q^{2(\theta^x-\eta^x+1)}).
\end{array}
\end{equation}

Take $\alpha=q^{2N(\boldsymbol\theta)}$,
Next we take $\boldsymbol\theta$ to $\infty$, i.e $\theta^x\xrightarrow[]{}\infty$ for all $x$,
\begin{equation}
\begin{array}{l}
 \displaystyle{\lim_{\boldsymbol\theta\xrightarrow{}\infty}{}_2\phi_1(q^{-2\xi^x},q^{-2(\theta^x-\eta^x)};q^{-2\theta^x};q^2,p^x(\boldsymbol\xi,\boldsymbol\theta-\boldsymbol\eta)q^{2(\theta^x-\eta^x+1)})} \\
     = \displaystyle{\lim_{\boldsymbol\theta\xrightarrow{}\infty}{}_2\phi_1\left(q^{-2\xi^x},q^{-2(\theta^x-\eta^x)};q^{-2\theta^x};q^2,q^{-2\left(N_{x-1}^-(\boldsymbol\xi)+N_{x+1}^+(\boldsymbol\eta)\right)-2\eta^x+1}\right)}\\   
       =\displaystyle{\sum_{k=0}^{\xi^x}\frac{(q^{-2\xi^x};q^2)_k}{(q^2;q^2)_k}\left(q^{-2\left(N_{x-1}^-(\boldsymbol\xi)+N_{x+1}^+(\boldsymbol\eta)\right)+1}\right)^k}\\  ={}_1\phi_0\left(q^{-2\xi^x};q^2,q^{-2\left(N_{x-1}^-(\boldsymbol\xi)+N_{x+1}^+(\boldsymbol\eta)\right)+1}\right).
\end{array}
\end{equation}
\end{proof}

Here is the generalized duality for general $n$-species  $q$-TAZRP .
\begin{theorem}
\begin{multline}
D^{q-TARZP}(\boldsymbol\xi,\boldsymbol\eta) =     q^{h(\boldsymbol\xi,\boldsymbol\eta)}\\
\times\prod_{i=0}^{n-1}\prod_{x\in\Lambda_L}{}_1\phi_0\left(q^{-2\xi_i^x};q^2,q^{-2\left(N_{x-1}^-(\boldsymbol\xi_i)+N_{x+1}^+(T(\boldsymbol\eta)_{i+1})\right)+1}\right)
\end{multline}
is a space reversed duality for $n $-species $q$-TAZRP , where $\boldsymbol\eta$ evolves with total asymmetry to the left and $\boldsymbol\xi$ evolves with total asymmetry to the right, and 
\begin{equation}\label{eq:2}
    h(\boldsymbol\xi,\boldsymbol\eta)=\sum_{x \in \Lambda_L }\sum_{i=0}^{n-1}\left(-\xi_i^xN_{x+1}^+(\boldsymbol\eta_{[0,n-2-i]})+\eta_i^xN_{x}^+(\boldsymbol\xi_{[0,n-2-i]})\right).
\end{equation}
\end{theorem}
\begin{proof}
 We will follow the proof for Theorem \ref{thm: 4.6} by first applying $T$ to $\boldsymbol\eta$ in $\mathcal{D}_{\boldsymbol{\alpha}}^{\boldsymbol{\theta}}(\boldsymbol\xi,\boldsymbol\eta)$ and then taking the limit as all $\theta^x$ goes to $\infty$. We will show that the limit  equals $D^{q-TARZP}(\boldsymbol\xi,\boldsymbol\eta)$ up to a constant. Namely, there exists function $f(\boldsymbol\theta,N(\boldsymbol\xi),N(\boldsymbol\eta))$ such that 
\begin{equation*}
    \lim_{\boldsymbol\theta\xrightarrow{}\infty} \frac{\mathcal{D}_{\boldsymbol{\alpha}}^{\boldsymbol{\theta}}(\boldsymbol\xi,T(\boldsymbol\eta)) }{q^{\frac{f(\boldsymbol\theta,N(\boldsymbol\xi),N(\boldsymbol\eta))}{2}}} = D^{q-TARZP}(\boldsymbol\xi,\boldsymbol\eta).
\end{equation*}
This will be verified by straightforward calculation.

 Let $\boldsymbol\theta^{(i)}=\boldsymbol\eta_{[0,i+1]}-\boldsymbol\xi_{[0,i-1]}$, $\alpha_i=q^{2N(\boldsymbol\theta^{(i)})}$,
recall that 
\begin{equation*}
\mathcal{D}_{\boldsymbol{\alpha}}^{\boldsymbol{\theta}}(\boldsymbol\xi,\boldsymbol\eta)=\prod_{i=0}^{n-1} D_{\alpha_i}^{(\boldsymbol\theta^{(i)})}(\boldsymbol\xi_i,\boldsymbol\eta_{[0,i]}-\boldsymbol\xi_{[0,i-1]}) 
    \times \sqrt{\frac{\prod_{i=0}^{n-1}\mu_{\alpha_i}^{\boldsymbol\theta^{(i)}}(\boldsymbol\xi_i)\mu_{\alpha_i}^{\boldsymbol\theta^{(i)}}(\boldsymbol\eta_{[0,i]}-\boldsymbol\xi_{[0,i-1]})}{\mu^n(\boldsymbol\xi) \mu^n(\boldsymbol\eta)}}.
\end{equation*}

First, Theorem \ref{thm: 4.6} shows that 
\begin{multline*}
    \lim_{\boldsymbol\theta\xrightarrow[]{}\infty}\prod_{i=0}^{n-1} D_{\alpha_i}^{(T(\boldsymbol\theta^{(i)}))}(\boldsymbol\xi_i,T(\boldsymbol\eta_{[0,i]}-\boldsymbol\xi_{[0,i-1]}))\\
= \prod_{i=0}^{n-1}\prod_{x=1}^L{}_1\phi_0\left(q^{-2\xi_i^x};q^2,q^{-2\left(N_{x-1}^-(\boldsymbol\xi_i)+N_{x+1}^+(T(\boldsymbol\eta)_{i+1})\right)+1}\right). 
\end{multline*}

Next, we calculate
\begin{multline*}
 \lim_{\boldsymbol\theta\xrightarrow[]{}\infty}\  \frac{\prod_{i=0}^{n-1}\mu_{T(\alpha_i)}^{T(\boldsymbol\theta^{(i)})}(\boldsymbol\xi_i)\mu_{T(\alpha_i)}^{T(\boldsymbol\theta^{(i)})}(T(\boldsymbol\eta_{[0,i]})-\boldsymbol\xi_{[0,i-1]})}{\mu^n(\boldsymbol\xi) \mu^n(T(\boldsymbol\eta))}\\
=\lim_{\boldsymbol\theta\xrightarrow[]{}\infty} \frac{\prod_{i=0}^{n-1}\mu_{T(\alpha_i)}^{T(\boldsymbol\theta^{(i)})}(\boldsymbol\xi_i)\mu_{T(\alpha_i)}^{T(\boldsymbol\theta^{(i)})}(T(\boldsymbol\eta_{[0,i]})-\boldsymbol\xi_{[0,i-1]})}{\mu^n(\boldsymbol\xi) \mu^n(T(\boldsymbol\eta))}   ,
\end{multline*}
where 
$T(\boldsymbol\theta^{(i)})=\boldsymbol\theta-\boldsymbol\eta_{[0,n-2-i]}-\boldsymbol\xi_{[0,i-1]}$, $T(\boldsymbol\eta_{[0,i]})=\boldsymbol\theta-\boldsymbol\eta_{[0,n-1-i]}$.
    For clarity, we restate some of the notations: $\xi^x_{[0,i-1]}=\sum_{j=0}^{i-1}\xi_j^x$,  $T(\boldsymbol\theta^{(i)})^x=\theta^x-\eta^x_{[0,n-2-i]}-\xi^x_{[0,i-1]}$ and $T(\boldsymbol\eta_{[0,i]})^x=\theta^x-\eta^x_{[0,n-1-i]}$.
Now plug in the reversible measures $\mu_\alpha^{\boldsymbol\theta}$ and $\mu^n$, we could split

\begin{equation}\label{eq: 25}
\frac{\prod_{i=0}^{n-1}\mu_{T(\alpha_i)}^{T(\boldsymbol\theta^{(i)})}(\boldsymbol\xi_i)\mu_{T(\alpha_i)}^{T(\boldsymbol\theta^{(i)})}(T(\boldsymbol\eta_{[0,i]})-\boldsymbol\xi_{[0,i-1]})}{\mu^n(\boldsymbol\xi) \mu^n(T(\boldsymbol\eta))}   
\end{equation}into two parts,  one contains $q$-binomial terms and the other is $q^{ r(\boldsymbol\theta,\boldsymbol\xi,\boldsymbol\eta)}$, i.e.
$$\eqref{eq: 25}=q^{ r(\boldsymbol\theta,\boldsymbol\xi,\boldsymbol\eta)}\prod_{x \in \Lambda_L }\frac{\prod_{i=0}^{n-1}\binom{T(\boldsymbol\theta^{(i)})^x}{\xi_i^x}_q \binom{T(\boldsymbol\theta^{(i)})^x}{T(\boldsymbol\eta_{[0,i]})^x-\xi^x_{[0,i-1]}}_q}{\binom{\theta^x}{\boldsymbol\xi^x}_q\binom{\theta^x}{\boldsymbol\eta^x}_q},$$
while
\begin{align*}
   r(\boldsymbol\theta,\boldsymbol\xi,\boldsymbol\eta) = & \sum_{i=0}^{n-1}  \sum_{x \in \Lambda_L } (2N_{x+1}^+(T(\boldsymbol\theta^{(i)}))+T(\boldsymbol\theta^{(i)})^x)(T(\boldsymbol\eta_{[0,i]})^x-\xi_{[0,i-1]}^x+\xi_i^x)\\
   &-\sum_{x \in \Lambda_L }\sum_{i=0}^{n}\frac{(\xi_i^x)^2+(\eta_i^x)^2}{2}
        +\sum_{x \in \Lambda_L }\sum_{y<x}\sum_{i=0}^{n-1}2\xi^x_{[0,i]}\xi_{i+1}^y\\
        &+\sum_{x \in \Lambda_L }\sum_{y<x}\sum_{i=0}^{n-1}2T(\boldsymbol\eta)^x_{[0,i]}T(\boldsymbol\eta)_{i+1}^y-2\sum_{i=0}^{n-1}N(T(\boldsymbol\theta^{(i)}))N(\boldsymbol\xi_i)\\ &
  -2\sum_{i=0}^{n-1}N(T(\boldsymbol\theta^{(i)}))N(T(\boldsymbol\eta_{[0,i]})-\boldsymbol\xi_{[0,i-1]})).   
\end{align*}

Now we take the limit for both parts.
\begin{align*}
   &\lim_{\boldsymbol\theta\xrightarrow{}\infty}    \prod_{x \in \Lambda_L }\frac{\prod_{i=0}^{n-1}\binom{T(\boldsymbol\theta^{(i)})^x}{\xi_i^x}_q \binom{T(\boldsymbol\theta^{(i)})^x}{T(\boldsymbol\eta_{[0,i]})^x-\xi^x_{[0,i-1]}}_q}{\binom{\theta^x}{\boldsymbol\xi^x}_q\binom{\theta^x}{\boldsymbol\eta^x}_q}\\
  &=\lim_{\boldsymbol\theta\xrightarrow{}\infty}  \prod_{x \in \Lambda_L }\left(\frac{\prod_{i=0}^{n-1}[\theta^x-\eta^x_{[0,n-2-i]}-\xi^x_{[0,i-1]}]^!_q}{[\theta^x]_q^!\prod_{i=0}^{n-2}[\theta^x-\eta^x_{[0,n-2-i]}-\xi^x_{[0,i]}]^!_q}\right)^2\\
 & =q^{2\sum_x\sum_{i=0}^{n-1}\xi^x_{n-2-i}\eta^x_{[0,i]}}, 
\end{align*}

We claim that $ r(\boldsymbol\theta,\boldsymbol\xi,\boldsymbol\eta)$ could be written as a sum of $f(\boldsymbol\theta,N(\boldsymbol\xi),N(\boldsymbol\eta))$ and a function $g(\boldsymbol\xi,\boldsymbol\eta)$ that does not depend on $\boldsymbol\theta$, we can divide the duality function by $q^{f/2}$, thus get a nontrivial limit.
\begin{equation*}
     r(\boldsymbol\theta,\boldsymbol\xi,\boldsymbol\eta)=f(\boldsymbol\theta,N(\boldsymbol\xi),N(\boldsymbol\eta))+g(\boldsymbol\xi,\boldsymbol\eta),
\end{equation*}
where
\begin{equation*}
\begin{split}
    g(\boldsymbol\xi,\boldsymbol\eta)
    =2\sum_{x \in \Lambda_L }\sum_{i=0}^{n-1}\left(-\xi_i^xN_{x+1}^+(\boldsymbol\eta_{[0,n-2-i]})+\eta_i^xN_{x+1}^+(\boldsymbol\xi_{[0,n-2-i]})\right).
\end{split}    
\end{equation*}
Combining limit of each part yields the desired result with
$$h(\boldsymbol\xi,\boldsymbol\eta)= \frac{g(\boldsymbol\xi,\boldsymbol\eta)}{2}+\sum_x\sum_{i=0}^{n-1}\xi^x_{n-2-i}\eta^x_{[0,i]}.$$

Although we do not provide an explicit formula for $f$, since we have proved the same duality for $q$-Hahn TAZRP, which degenerates to $q-$TAZRP, it is confirmed that $h(\boldsymbol\xi,\boldsymbol\eta)$ is correct.

Note that when $n=1$, $g=0$, so it degenerates to the single-species case as well.

\end{proof}

\subsection{Proof of Theorem \ref{Vertex}: duality for the stochastic multi--species higher--spin vertex model}
Although this proof is difficult from a probabilistic perspective, it is straightforward from an algebraic perspective. Because the argument is exactly the same as in \cite{KuanCMP}, we only outline the idea. The transition matrix for the vertex model is known as the stochastic transfer matrix,  and is a product of the matrices $S(z)$ from the end of section \ref{sec2.4}. These matrices satisfy
$$
S(z) \Delta(u) = \Delta(u) S^{\text{rev}}(z)
$$
for any $u\in \mathcal{U}_q(\mathfrak{gl}_{n+1})$, where
$$
S^{\text{rev}}(z) = P \circ S(z)
$$
with $P$ being the permutation operator
$$
P(v \otimes w) = w\otimes v.
$$
The permutation operator can be interpreted probabilistically as the charge--parity symmetry. The key difference between this paper and \cite{KuanCMP} is a different choice of $u$, which does not change the fact that resulting function is a duality function.

\subsection{Proof of Theorem \ref{qHahn}: duality for the multi--species   $q$-Hahn TAZRP}

First, a few identities will be needed. These will be stated as the below lemmas:

\begin{lemma}\label{lemma:1}
Fix $\vert q\vert<1$ and $0\leq \mu<1, 0 \leq \lambda \leq 1$. Then for all non--negative integers $x$ and $y$, and any real number $c$,
$$
\sum_{j=0}^{x} \Phi_{q}(j \mid x, \lambda, \mu) \frac{\left(q^{-c+j};q\right)_{y}}{\left(q^{-c};q\right)_{y}}=\sum_{s=0}^{y} \Phi_{q}(s \mid y, \lambda, \mu) \frac{\left(q^{-c+s};q\right)_{x}}{\left(q^{-c};q\right)_{x}}.
$$

\end{lemma}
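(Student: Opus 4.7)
The strategy is to first apply the Pochhammer shift identity $(q^{-c};q)_{j+y} = (q^{-c};q)_j(q^{-c+j};q)_y = (q^{-c};q)_y(q^{-c+y};q)_j$, which yields
\begin{equation*}
\frac{(q^{-c+j};q)_y}{(q^{-c};q)_y} \;=\; \frac{(q^{-c+y};q)_j}{(q^{-c};q)_j}.
\end{equation*}
Setting $D_c(k,w) := (q^{-c+w};q)_k/(q^{-c};q)_k$, the claim is equivalent to
\begin{equation*}
\sum_{j=0}^x \Phi_q(j\mid x,\lambda,\mu)\,D_c(j,y) \;=\; \sum_{s=0}^y \Phi_q(s\mid y,\lambda,\mu)\,D_c(s,x),
\end{equation*}
which is the intertwining relation expressing that $D_c$ is a self-duality function for the single-site $q$-Hahn Boson kernel $\Phi_q(\cdot\mid\cdot,\lambda,\mu)$.

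The plan is then to prove this intertwining by direct $q$-hypergeometric manipulation. In the single-species case the $\chi$-exponent vanishes and only one $q$-binomial remains, so
\begin{equation*}
\Phi_q(j\mid x,\lambda,\mu) \;=\; \left(\frac{\mu}{\lambda}\right)^{j} \frac{(\lambda;q)_j\,(\mu/\lambda;q)_{x-j}}{(\mu;q)_x}\binom{x}{j}_q.
\end{equation*}
Using the standard reflections $\binom{x}{j}_q = (-1)^j q^{jx-\binom{j}{2}}(q^{-x};q)_j/(q;q)_j$ and $(\mu/\lambda;q)_{x-j} = (\mu/\lambda;q)_x/(\mu q^{x-j}/\lambda;q)_j$, the left-hand side can be brought, up to a factor independent of $j$, into the form of a terminating ${}_3\phi_2$ series whose upper parameters include $q^{-x}$ and $q^{-c+y}$. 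A parallel computation puts the right-hand side in the same form but with $x\leftrightarrow y$. The required equality then follows from Sears' transformation formula for terminating ${}_3\phi_2$ series, which produces precisely the needed symmetry between two of the upper parameters.

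A conceptually cleaner alternative is to recognize the reduced intertwining as a special case of the single-species $q$-Hahn Boson self-duality established in \cite{Corwin2015TheQB}, whose duality functions have exactly this $q$-Pochhammer form. Once $D_c$ is matched (up to a factor independent of the dynamical variable), the identity is immediate and does not require any new $q$-hypergeometric computation.

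The main obstacle is the Pochhammer bookkeeping needed to rewrite the $j$-dependent factor $(\mu/\lambda;q)_{x-j}$, whose argument shifts in $j$, in terms of a standard Pochhammer symbol with positive increments in $j$, so that the summand has the standard form of a ${}_3\phi_2$ summand. Once this is done, either Sears' transformation or (equivalently) the conceptual identification with the known $q$-Hahn duality completes the proof.
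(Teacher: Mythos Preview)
Your approach is correct and essentially the same as the paper's: both reduce the left-hand side to a terminating ${}_3\phi_2$ series (your Pochhammer shift $\tfrac{(q^{-c+j};q)_y}{(q^{-c};q)_y}=\tfrac{(q^{-c+y};q)_j}{(q^{-c};q)_j}$ is exactly what makes $q^{-c+y}$ appear as an upper parameter), and then apply a ${}_3\phi_2$ transformation---the paper uses the formula from \cite{Gas96}, which is a variant of the Sears transformation you invoke---to exhibit the $x\leftrightarrow y$ symmetry. The paper writes out the ${}_3\phi_2$ identification and the parameter substitution explicitly, whereas you give the roadmap; the content is the same.
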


\begin{proof}
Let $S_{x,y}=\sum_{j=0}^x\Phi_q(j\mid x,\lambda,\mu)\frac{(q^{-c+j};q)_y}{(q^{-c};q)_y}$.  First note that 
\begin{equation*}
    S_{x,y}=\frac{(\mu/\lambda)_x}{(\mu)_x}{ }_{3} \phi_{2}\left(\begin{matrix}
\lambda & q^{-x} & q^{-c+y}\\
\frac{\lambda}{\mu}q^{1-x} &  & q^{-c}
\end{matrix};q,q\right).
\end{equation*}
Recall a transformation formula for ${ }_{3} \phi_{2}$ \cite{Gas96}:
\begin{equation*}
    { }_{3} \phi_{2}\left(\begin{matrix}
 q^{-n} & a & b\\
d &  & e
\end{matrix};q,q\right)=\frac{(a;q)_n(e/b;q)_n}{(d;q)_n(e;q)_n}b^n{ }_{3} \phi_{2}\left(\begin{matrix}
 q^{-n} & d/a &  q^{1-n}/e\\
q^{1-n}/a &  & q^{1-n}b/d
\end{matrix};q,q\right).
\end{equation*}
Let $n=x$, $a=q^{-c+y}$, $b=\lambda$, $d=q^{-c}$, $e=\frac{\lambda}{\mu}q^{1-x}$,
\begin{align*}
  S_{x,y}   &  =\frac{\left(\mu/\lambda;q\right)_x\left(q^{-c+y};q\right)_x\left(\frac{1}{\mu}q^{1-x};q\right)_x}{\left(\mu;q\right)_x\left(q^{-c};q\right)_x\left(\frac{\lambda}{\mu}q^{1-x};q\right)_x}\lambda^x{ }_{3} \phi_{2}\left(\begin{matrix}
 q^{-x} & q^{-y} &  \frac{\mu}{\lambda}\\
q^{1-x-y+c} &  & \mu
\end{matrix};q,q\right)\\
     & =\frac{\left(q^{-c+y};q\right)_x}{\left(q^{-c};q\right)_x}{ }_{3} \phi_{2}\left(\begin{matrix}
 q^{-x} & q^{-y} &  \frac{\mu}{\lambda}\\
q^{1-x-y+c} &  & \mu
\end{matrix};q,q\right)\\
&=\frac{\left(q^{-c+x};q\right)_y}{\left(q^{-c};q\right)_y}{ }_{3} \phi_{2}\left(\begin{matrix}
 q^{-x} & q^{-y} &  \frac{\mu}{\lambda}\\
q^{1-x-y+c} &  & \mu
\end{matrix};q,q\right)\\
&=S_{y,x}.
\end{align*}
\end{proof}
\normalsize
\noindent
Take derivative of lemma \ref{lemma:1} w.r.t. $\lambda$ and let $\lambda=1$ yields the following identity.
 
 \begin{corollary}\label{Cor:1}
\begin{equation}  \sum_{0\le j\le x}\Phi'(j\mid x,\mu)\frac{(q^{-c+y};q)_j}{(q^{-c};q)_j}=\sum_{0\le s\le y}\Phi'(s\mid y,\mu)\frac{(q^{-c+x};q)_s}{(q^{-c};q)_s}
     \end{equation}
for $x,y,\in \mathbb{N},c\in \mathbb{R} $.
\end{corollary}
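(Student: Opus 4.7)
The plan is to obtain the identity directly from Lemma \ref{lemma:1} by differentiating in $\lambda$ and then evaluating at $\lambda=1$. Since both sides of Lemma \ref{lemma:1} are finite sums in $j$ and $s$ respectively, and the ratios $(q^{-c+j})_y/(q^{-c})_y$ and $(q^{-c+s})_x/(q^{-c})_x$ are independent of $\lambda$, the $\lambda$--derivative passes inside the sum without any analytic difficulty, yielding
\begin{equation*}
\sum_{j=0}^{x} \frac{\partial \Phi_q}{\partial \lambda}(j \mid x,\lambda,\mu)\,\frac{(q^{-c+j})_y}{(q^{-c})_y}
= \sum_{s=0}^{y} \frac{\partial \Phi_q}{\partial \lambda}(s \mid y,\lambda,\mu)\,\frac{(q^{-c+s})_x}{(q^{-c})_x}.
\end{equation*}
Setting $\lambda=1$ then produces the desired identity, modulo a small cosmetic step described below.

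The cosmetic step is to match the form of the ratio appearing in the statement of the corollary. The corollary has $(q^{-c+y})_j/(q^{-c})_j$ on the left (and $(q^{-c+x})_s/(q^{-c})_s$ on the right), whereas direct differentiation of Lemma \ref{lemma:1} produces $(q^{-c+j})_y/(q^{-c})_y$ (and $(q^{-c+s})_x/(q^{-c})_x$). One reconciles these using the elementary telescoping identity
\begin{equation*}
(q^{-c+j})_y \cdot (q^{-c})_j = (q^{-c})_{j+y} = (q^{-c+y})_j \cdot (q^{-c})_y,
\end{equation*}
which rearranges to $(q^{-c+j})_y/(q^{-c})_y = (q^{-c+y})_j/(q^{-c})_j$, and similarly with the roles of $x$ and $s$ swapped on the other side.

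I do not anticipate any real obstacle: once Lemma \ref{lemma:1} is in hand, the proof is essentially a one-line differentiation. The only place where one must be slightly careful is confirming that the notation $\Phi'(j \mid x,\mu)$ used in the corollary indeed denotes $\frac{\partial}{\partial\lambda}\Phi_q(j \mid x,\lambda,\mu)\big|_{\lambda=1}$, which is the natural reading consistent with passing from the discrete-time $q$--Hahn TAZRP to its continuous-time analogue as described in the paragraph following the definition of $\Phi_q$ in Section \ref{Definitions}. With that convention fixed, the two steps above yield the corollary.
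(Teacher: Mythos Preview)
Your proposal is correct and follows exactly the same approach as the paper, which simply states that the corollary is obtained by differentiating Lemma~\ref{lemma:1} with respect to $\lambda$ and setting $\lambda=1$. Your additional cosmetic step reconciling $(q^{-c+j})_y/(q^{-c})_y$ with $(q^{-c+y})_j/(q^{-c})_j$ via the identity $(q^{-c})_{j+y}=(q^{-c+j})_y(q^{-c})_j=(q^{-c+y})_j(q^{-c})_y$ is a nice clarification that the paper leaves implicit.
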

\noindent
In what follows, $\xi_{[n_1,n_2]}=\sum_{i=n_1}^{n_2}\xi_i$, if $n_1>n_2$, then $\xi_{[n_1,n_2]}=0$. \newline $\boldsymbol\xi_{[n_1,n_2]}=(\xi_{n_1},\ldots,\xi_{n_2})$. Here is a multi-species generalization:

\begin{prop}\label{prop:1}
(1)Fix $\mid q\mid<1$ and $0\le \mu<1$, $0\le \lambda\le 1$, then for all $\boldsymbol\eta,\boldsymbol\xi\in\mathbb{Z}^{n}_{\ge 0}$, and $\boldsymbol{c}\in\mathbb{R}^{n}$.
\begin{multline}\label{eq:6}
\sum_{\boldsymbol\gamma\le \boldsymbol\eta }\Phi(\boldsymbol\gamma\mid\boldsymbol\eta,\lambda,\mu) \prod_{i=0}^{n-1}q^{\boldsymbol\gamma_i\xi_{[0,n-2-i]}}\frac{\left(q^{-c_i+\gamma_{n-1-i}};q\right)_{\xi_i}}{\left(q^{-c_i};q\right)_{\xi_i}}
    \\ =\sum_{\boldsymbol\zeta\le \boldsymbol\xi }\Phi(\boldsymbol\zeta\mid\boldsymbol\xi,\lambda,\mu)\prod_{i=0}^{n-1}q^{\zeta_i\eta_{[0,n-2-i]}}\frac{\left(q^{-c_i+\zeta_i};q\right)_{\eta_{n-1-i}}}{\left(q^{-c_i};q\right)_{\eta_{n-1-i}}}  .
\end{multline}
(2)Fix $\mid q\mid<1$ and $0\le \mu<1$, then for all $\boldsymbol\eta,\boldsymbol\xi\in\mathbb{Z}^{n}_{\ge 0}$, and $\boldsymbol{c}\in\mathbb{R}^{n}$ 
\begin{multline*}
    \sum_{\boldsymbol\gamma\le \boldsymbol\eta }\Phi'(\boldsymbol\gamma\mid\boldsymbol\eta,\mu) \prod_{i=0}^{n-1}q^{\gamma_i\xi_{[0,n-2-i]}}\frac{\left(q^{-c_i+\gamma_{n-1-i}};q\right)_{\xi_i}}{\left(q^{-c_i};q\right)_{\xi_i}}
     \\=\sum_{\boldsymbol\gamma\le \boldsymbol\xi }\Phi'(\boldsymbol\zeta\mid\boldsymbol\xi,\mu)\prod_{i=0}^{n-1}q^{\zeta_i\eta_{[0,n-2-i]}}\frac{\left(q^{-c_i+\zeta_i};q\right)_{\eta_{n-1-i}}}{\left(q^{-c_i};q\right)_{\eta_{n-1-i}}}  .
\end{multline*}
\end{prop}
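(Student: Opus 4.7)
My plan is to first reduce part (2) to part (1) by differentiating both sides with respect to $\lambda$ and evaluating at $\lambda = 1$. The $\lambda$-dependence of $\Phi$ enters only through $(\lambda;q)_{|\gamma|}$, $(\mu/\lambda;q)_{|\eta|-|\gamma|}$, and the prefactor $(\mu/\lambda)^{|\gamma|}$, all smooth at $\lambda=1$, so the derivative commutes with the finite sums and produces exactly the identity in (2) by the definition of $\Phi'$.

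For part (1) I would induct on $n$. The base case $n=1$ is Lemma \ref{lemma:1} verbatim: the product over $i \in \{0,\ldots,n-1\}$ collapses to the single $i=0$ factor, the partial sums $\xi_{[0,-1]} = \eta_{[0,-1]} = 0$ trivialize the exponential weights, and the subscripts $\gamma_i$ and $\gamma_{n-1-i}$ coincide; taking $x=\eta_0$, $y=\xi_0$, $c=c_0$ in Lemma \ref{lemma:1} gives exactly the desired identity.

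For the inductive step $n-1 \to n$, I would split $\sum_{\bs\gamma \le \bs\eta} = \sum_{\gamma_0 \le \eta_0}\sum_{\bs\gamma'\le\bs\eta'}$ with $\bs\gamma' = (\gamma_1,\ldots,\gamma_{n-1})$, and use a conditional factorization
$$\Phi_q(\bs\gamma|\bs\eta,\lambda,\mu) \;=\; \Phi_q(\gamma_0 \mid \eta_0,\lambda^\sharp,\mu^\sharp)\cdot\Phi_q(\bs\gamma'\mid\bs\eta',\lambda^\flat,\mu^\flat)$$
with parameter shifts that absorb the cross-term $q^{(\eta_0-\gamma_0)|\bs\gamma'|}$ arising from $\chi_{\eta,\gamma}$. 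After pulling the $\gamma_0$-independent factors through, the inner $\bs\gamma'$-sum is handled by the inductive hypothesis on the $(n-1)$-species identity (with a correspondingly shifted parameter vector $\bs c'$), and the outer $\gamma_0$-sum by Lemma \ref{lemma:1}. The reassembled expression should then match the right-hand side of (1) after recognizing the analogous decomposition of $\Phi_q(\bs\zeta|\bs\xi,\lambda,\mu)$.

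The main obstacle is combinatorial bookkeeping. The reversal $i \leftrightarrow n-1-i$ between $\bs\gamma$- and $\bs\xi$-subscripts means $\gamma_0$ actually appears inside the $i=n-1$ Pochhammer factor, not the $i=0$ one, so the split must be chosen so that (i) Lemma \ref{lemma:1} applies to the $\gamma_0$-sum with the correctly exchanged Pochhammer factor, and (ii) the inductive hypothesis applies with the correct shifted $\bs c'$ and the exchange $\bs\gamma'\leftrightarrow\bs\zeta'$; the cross-term and the weights $q^{\gamma_i \xi_{[0,n-2-i]}}$ must all be absorbed cleanly into $(\lambda^\sharp,\mu^\sharp,\lambda^\flat,\mu^\flat,\bs c')$. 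If this direct induction becomes unwieldy, a backup would be to verify equality of the generating functions $\sum_{\bs\gamma}\Phi(\bs\gamma|\bs\eta,\lambda,\mu)\prod_i z_i^{\gamma_i}$ (with the $z_i$ chosen to encode the Pochhammer ratios) by iterated Sears $_3\phi_2$ transformations, mimicking directly the proof of Lemma \ref{lemma:1}.
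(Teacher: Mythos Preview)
Your proposal is correct and follows essentially the same inductive strategy as the paper: base case Lemma~\ref{lemma:1}, inductive step via the conditional factorization of $\Phi$, and part~(2) by differentiating part~(1) at $\lambda=1$. The only difference is a mirror-image choice: the paper peels off the \emph{last} coordinate $\gamma_n$ (whose exponential weight $q^{\gamma_n\xi_{[0,-1]}}=1$ is trivial), applies Lemma~\ref{lemma:1} to that inner sum first, then refactors $\Phi(\bs\gamma_{[0,n-1]}|\bs\eta_{[0,n-1]},\lambda,\mu)\Phi(\zeta_0|\xi_0,\lambda q^{\gamma_{[0,n-1]}},\mu q^{\eta_{[0,n-1]}})$ into $\Phi(\bs\gamma_{[0,n-1]}|\bs\eta_{[0,n-1]},\lambda q^{\zeta_0},\mu q^{\xi_0})$ times a $\gamma$-independent prefactor, and only then invokes the induction hypothesis; you peel off $\gamma_0$ and apply induction before Lemma~\ref{lemma:1}, which works via the same swap identity but carries the nontrivial weight $q^{\gamma_0\xi_{[0,n-2]}}$ through one extra step. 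In either ordering no shift in $\bs c$ is actually required.
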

\begin{proof}
Induction on $n$, the base case when $n=1$ is just Lemma \ref{lemma:1}, now assume \eqref{eq:6} holds for $n$-species, next we show the identity holds also for $(n+1)$-species.

Now suppose $\boldsymbol\eta,\boldsymbol\xi\in\mathbb{Z}^{n+1}_{\ge 0}$, and $\boldsymbol{c}\in\mathbb{R}^{n+1}$,
\begin{multline}\label{eq:3}
    \sum_{\boldsymbol\gamma\le \boldsymbol\eta }\Phi(\boldsymbol\gamma\mid\boldsymbol\eta,\lambda,\mu) \prod_{i=0}^{n}q^{\gamma_i\xi_{[0,n-1-i]}}\frac{\left(q^{-c_{n-i}+\gamma_{i}};q\right)_{\xi_{n-i}}}{\left(q^{-c_{n-i}};q\right)_{\xi_{n-i}}}\\
    =\sum_{\boldsymbol\gamma_{[0,n-1]}\le \boldsymbol\eta_{[0,n-1]}}\Phi(\boldsymbol\gamma_{[0,n-1]}\mid\boldsymbol\eta_{[0,n-1]},\lambda,\mu)\prod_{i=0}^{n-1}q^{\gamma_i\xi_{[0,n-1-i]}}\frac{\left(q^{-c_{n-i}+\gamma_{i}};q\right)_{\xi_{n-i}}}{\left(q^{-c_{n-i}};q\right)_{\xi_{n-i}}}\\
    \sum_{\gamma_{n}\le \eta_{n}}\left(\left(\frac{\mu}{\lambda}\right)^{\gamma_n}q^{\gamma_n(\eta_{[0,n-1]}-\gamma_{[0,n-1]})}\frac{(\lambda q^{\gamma_{[0,n-1]}};q)_{\gamma_n}(\frac{\mu}{\lambda} q^{\eta_{[0,n-1]}-\gamma_{[0,n-1]}};q)_{\eta_{n}-\gamma_{n}}}{(\mu q^{\eta_{[0,n-1]}};q)_{\eta_n}}\right.\\
    \left.\times\binom{\eta_n}{\gamma_n}_q\frac{\left(q^{-c_{0}+\gamma_{n}};q\right)_{\xi_{0}}}{\left(q^{-c_{0}};q\right)_{\xi_{0}}}\right).
\end{multline}

Let $\hat{\lambda}=\lambda q^{\gamma_{[0,n-1]}}$, $\hat{\mu}=\mu q^{\eta_{[0,n-1]}}$, by Lemma \ref{lemma:1},
\begin{align*}
        &\sum_{\gamma_{n}\le \eta_{n}}\left(\frac{\mu}{\lambda}\right)^{\gamma_n}q^{\gamma_n(\eta_{[0,n-1]}-\gamma_{[0,n-1]})}\frac{(\lambda q^{\gamma_{[0,n-1]}})_{\gamma_n}(\frac{\mu}{\lambda} q^{\eta_{[0,n-1]}-\gamma_{[0,n-1]}})_{\eta_{n}-\gamma_{n}}}{(\mu q^{\eta_{[0,n-1]}})_{\eta_n}}
        \\ &\times \binom{\eta_n}{\gamma_n}_q\frac{\left(q^{-c_{0}+\gamma_{n}};q\right)_{\xi_{0}}}{\left(q^{-c_{0}};q\right)_{\xi_{0}}}
         =\sum_{\gamma_{n}\le \eta_{n}}\Phi(\gamma_n\mid\eta_n,\hat{\lambda},\hat{\mu})\frac{\left(q^{-c_{0}+\gamma_{n}};q\right)_{\xi_{0}}}{\left(q^{-c_{0}};q\right)_{\xi_{0}}}
          \\ &=\sum_{\zeta_0\le \xi_{0}}\Phi(\zeta_0\mid\xi_{0},\hat{\lambda},\hat{\mu})\frac{\left(q^{-c_{0}+\zeta_0};q\right)_{\eta_n}}{\left(q^{-c_{0}};q\right)_{\eta_n}}.  
\end{align*}
Let $\tilde{\lambda}=\lambda q^{\zeta_0}$,  $\tilde{\mu}=\mu q^{\xi_0}$,
\begin{align*}
&\sum_{\boldsymbol\gamma_{[0,n-1]}\le \boldsymbol\eta_{[0,n-1]}}\Phi(\boldsymbol\gamma_{[0,n-1]}\mid\boldsymbol\eta_{[0,n-1]},\lambda,\mu)\prod_{i=0}^{n-1}q^{\gamma_i\xi_{[0,n-i-1]}}\frac{\left(q^{-c_{n-i}+\gamma_{i}};q\right)_{\xi_{n-i}}}{\left(q^{-c_{n-i}};q\right)_{\xi_{n-i}}}\\
         &\quad \quad \quad \times q^{\zeta_0(\eta_{[0,n-1]}-\gamma_{[0,n-1]})}\frac{(\lambda q^{\gamma_{[0,n-1]}};q)_{\zeta_0}(\frac{\mu}{\lambda} q^{\eta_{[0,n-1]}-\gamma_{[0,n-1]}};q)_{\xi_{0}-\zeta_0}}{(\mu q^{\eta_{[0,n-1]}};q)_{\xi_{0}}}\\
       & = q^{\zeta_0\eta_{[0,n-1]}}\frac{(\lambda;q)_{\zeta_0}(\frac{\mu}{\lambda};q)_{\xi_0-\zeta_0}}{(\mu;q)_{\xi_0}}
        \sum_{\boldsymbol\gamma_{[0,n-1]}\le \boldsymbol\eta_{[0,n-1]}}\Phi(\boldsymbol\gamma_{[0,n-1]}\mid\boldsymbol\eta_{[0,n-1]},\tilde{\lambda},\tilde{\mu})
        \\
        &\quad \quad \quad \times \prod_{i=0}^{n-1}q^{\gamma_i\xi_{[1,n-i-1]}}\frac{\left(q^{-c_{n-i}+\gamma_{i}};q\right)_{\xi_{n-i}}}{\left(q^{-c_{n-i}};q\right)_{\xi_{n-i}}}\\
       & =q^{\zeta_0\eta_{[0,n-1]}}\frac{(\lambda;q)_{\zeta_0}(\frac{\mu}{\lambda};q)_{\xi_0-\zeta_0}}{(\mu;q)_{\xi_0}}\sum_{\boldsymbol\zeta_{[1,n]}\le \boldsymbol\xi_{[1,n]}}\Phi(\boldsymbol\zeta_{[1,n]}\mid\boldsymbol\xi_{[1,n]},\tilde{\lambda},\tilde{\mu})
        \\
           &\quad \quad \quad \times \prod_{i=0}^{n-1}q^{\zeta_{i+1}\eta_{[0,n-i-2]}}\frac{\left(q^{-c_{i}+\zeta_{i}};q\right)_{\eta_{n-i}}}{\left(q^{-c_{i}};q\right)_{\eta_{n-i}}}
        \\
        &=q^{\zeta_0\eta_{[0,n-1]}}\frac{(\lambda)_{\zeta_0}(\frac{\mu}{\lambda})_{\xi_0-\zeta_0}}{(\mu)_{\xi_0}}\sum_{\boldsymbol\zeta_{[1,n]}\le \boldsymbol\xi_{[1,n]}}\Phi(\boldsymbol\zeta_{[1,n]}\mid\boldsymbol\xi_{[1,n]},\tilde{\lambda},\tilde{\mu}) \\
           &\quad \quad \quad \times
        \prod_{i=1}^{n}q^{\zeta_{i}\eta_{[0,n-i-1]}}\frac{\left(q^{-c_{i}+\zeta_{i}};q\right)_{\eta_{n-i}}}{\left(q^{-c_{i}};q\right)_{\eta_{n-i}}},
\end{align*}  
where the second to last equality follows from the induction hypothesis.
Thus plug in the above two equations,
\begin{align*}
     &\text{\eqref{eq:3}}=\sum_{\boldsymbol\gamma_{[0,n-1]}\le \boldsymbol\eta_{[0,n-1]}}\Phi(\boldsymbol\gamma_{[0,n-1]}\mid\boldsymbol\eta_{[0,n-1]},\lambda,\mu)\prod_{i=0}^{n-1}q^{\gamma_i\xi_{[0,n-1-i]}}
     \\
 &\quad \quad \quad\times \frac{\left(q^{-c_{n-i}+\gamma_{i}};q\right)_{\xi_{n-i}}}{\left(q^{-c_{n-i}};q\right)_{\xi_{n-i}}} \sum_{\zeta_0\le \xi_{0}}\Phi(\zeta_0\mid\xi_{0},\hat{\lambda},\hat{\mu})\frac{\left(q^{-c_{0}+\zeta_0};q\right)_{\eta_n}}{\left(q^{-c_{0}};q\right)_{\eta_n}}\\
    & =\sum_{\boldsymbol\gamma_{[0,n-1]}\le \boldsymbol\eta_{[0,n-1]}}\Phi(\boldsymbol\gamma_{[0,n-1]}\mid\boldsymbol\eta_{[0,n-1]},\lambda,\mu)\prod_{i=0}^{n-1}q^{\gamma_i\xi_{[0,n-1-i]}}\frac{\left(q^{-c_{n-i}+\gamma_{i}};q\right)_{\xi_{n-i}}}{\left(q^{-c_{n-i}};q\right)_{\xi_{n-i}}}\\
      &  \quad   \times
        \sum_{\zeta_0\le \xi_{0}}\left(\frac{\mu}{\lambda}\right)^{\zeta_0}q^{\zeta_0(\eta_{[0,n-1]}-\gamma_{[0,n-1]})}\frac{(\lambda q^{\gamma_{[0,n-1]}})_{\zeta_0}(\frac{\mu}{\lambda} q^{\eta_{[0,n-1]}-\gamma_{[0,n-1]}})_{\xi_{0}-\zeta_0}}{(\mu q^{\eta_{[0,n-1]}})_{\xi_{0}}} \\ 
        &     \quad  \times \binom{\xi_{0}}{\zeta_0}_q\frac{\left(q^{-c_{0}+\zeta_0};q\right)_{\eta_n}}{\left(q^{-c_{0}};q\right)_{\eta_n}}
        = \sum_{\zeta_0\le \xi_{0}}\left(\frac{\mu}{\lambda}\right)^{\zeta_0} \binom{\xi_{0}}{\zeta_0}_q\frac{\left(q^{-c_{0}+\zeta_0};q\right)_{\eta_n}}{\left(q^{-c_{0}};q\right)_{\eta_n}} \\    &   \quad  \times \sum_{\boldsymbol\gamma_{[0,n-1]}\le \boldsymbol\eta_{[0,n-1]}}\Phi(\boldsymbol\gamma_{[0,n-1]}\mid\boldsymbol\eta_{[0,n-1]},\lambda,\mu)
        \prod_{i=0}^{n-1}q^{\gamma_i\xi_{[0,n-1-i]}}\frac{\left(q^{-c_{n-i}+\gamma_{i}};q\right)_{\xi_{n-i}}}{\left(q^{-c_{n-i}};q\right)_{\xi_{n-i}}} \\    
        &  \quad  \times q^{\zeta_0(\eta_{[0,n-1]}-\gamma_{[0,n-1]})}\frac{(\lambda q^{\gamma_{[0,n-1]}};q)_{\zeta_0}(\frac{\mu}{\lambda} q^{\eta_{[0,n-1]}-\gamma_{[0,n-1]}};q)_{\xi_{0}-\zeta_0}}{(\mu q^{\eta_{[0,n-1]}};q)_{\xi_{0}}}\\
        &=\sum_{\zeta_0\le \xi_{0}}\left(\frac{\mu}{\lambda}\right)^{\zeta_0}\binom{\xi_{0}}{\zeta_0}_q\frac{\left(q^{-c_{0}+\zeta_0};q\right)_{\eta_n}}{\left(q^{-c_{0}};q\right)_{\eta_n}}q^{\zeta_0\eta_{[0,n-1]}}\frac{(\lambda;q)_{\zeta_0}(\frac{\mu}{\lambda};q)_{\xi_0-\zeta_0}}{(\mu;q)_{\xi_0}}\\
     & \quad \times\sum_{\boldsymbol\zeta_{[1,n]}\le \boldsymbol\xi_{[1,n]}}\Phi(\boldsymbol\zeta_{[1,n]}\mid\boldsymbol\xi_{[1,n]},\lambda q^{\zeta_0},\mu q^{\xi_0})\prod_{i=1}^{n}q^{\zeta_i\eta_{[0,n-1-i]}}\frac{\left(q^{-c_{i}+\zeta_{i}};q\right)_{\eta_{n-i}}}{\left(q^{-c_{i}};q\right)_{\eta_{n-i}}}\\
      &   =\sum_{\boldsymbol\zeta\le \boldsymbol\xi }\Phi(\boldsymbol\zeta\mid\boldsymbol\xi,\lambda,\mu)\prod_{i=0}^{n}q^{\zeta_i\eta_{[0,n-1-i]}}\frac{\left(q^{-c_i+\zeta_i};q\right)_{\eta_{n-i}}}{\left(q^{-c_i};q\right)_{\eta_{n-i}}}\;.
         \end{align*}
  
The second part of the proposition is the derivative of the first part with respect to $\lambda$ at $\lambda=1$.

\end{proof}

With the identities proven, we now turn to the conjecture that the duality function obtained in previous section is a space–reversed self–duality function for the single--species $q^2$-Hahn Boson process. 

\begin{theorem}
\begin{eqnarray}
     D(\boldsymbol\eta,\boldsymbol\xi) = \prod_{x \in \Lambda_L }{}_1\phi_0\left(q^{-\xi^x};q,q^{-\left(N_{x-1}^-(\boldsymbol\xi)+N_{x+1}^+(\boldsymbol\eta)\right)+1/2}\right) \\
     =\prod_{x \in \Lambda_L } \left(q^{-\left(N_{x}^-(\boldsymbol\xi)+N_{x+1}^+(\boldsymbol\eta)\right)+1/2};q\right)_{\xi^x} \ \ \ \ \ \ \ \ \ \ \ \ \ \   \nonumber
\end{eqnarray} 
is a space reversed self-duality function for $q$-Hahn Boson process where $\boldsymbol\eta$ evolves with total asymmetry to the left and $\boldsymbol\xi$ evolves with total asymmetry to the right.
\end{theorem}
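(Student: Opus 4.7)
The plan is to verify the duality relation infinitesimally at the generator level, checking bond by bond. The continuous-time $q$-Hahn Boson generator with right drift is a sum over bonds $(x,x+1)$ with rate $\Phi'(k\mid \xi^x,\mu)$ for $k$ particles to move from $x$ to $x+1$; the space-reversed generator has rate $\Phi'(k\mid \eta^{x+1},\mu)$ for $k$ particles to move from $x+1$ to $x$. Writing $D(\bs\xi,\bs\eta)=\prod_z D_z(\bs\xi,\bs\eta)$ with
\[
D_z(\bs\xi,\bs\eta)=\bigl(q^{1/2-N^-_z(\bs\xi)-N^+_{z+1}(\bs\eta)}\bigr)_{\xi^z},
\]
a direct check using $N^-_{x+1}(\bs\xi)=N^-_x(\bs\xi)+\xi^{x+1}$ and $N^+_{x+2}(\bs\eta)=N^+_{x+1}(\bs\eta)-\eta^{x+1}$ shows that only $D_x$ and $D_{x+1}$ are affected by a $\bs\xi$-jump, while only $D_x$ is affected by an $\bs\eta$-jump. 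Factoring out the product $\prod_{z\notin\{x,x+1\}}D_z$ (invariant under both jumps) and using $\sum_k\Phi'(k\mid n,\mu)=0$ (from differentiating $\sum_k\Phi_q(k\mid n;\lambda,\mu)\equiv 1$ in $\lambda$) to eliminate the subtracted $-D(\bs\xi,\bs\eta)$ terms, it suffices to prove the bond identity
\[
\sum_k \Phi'(k\mid \xi^x,\mu)\,\frac{D_x^{(\bs\xi)}D_{x+1}^{(\bs\xi)}}{D_xD_{x+1}}=\sum_k \Phi'(k\mid \eta^{x+1},\mu)\,\frac{D_x^{(\bs\eta)}}{D_x}.
\]

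Next I will compute the ratios explicitly. Setting $a=q^{1/2-N^-_x(\bs\xi)-N^+_{x+1}(\bs\eta)}$, $b=q^{1/2-N^-_{x+1}(\bs\xi)-N^+_{x+2}(\bs\eta)}$, $m=\xi^x$, $n=\xi^{x+1}$ and invoking the elementary identities $(a)_m=(a)_k(aq^k)_{m-k}$ and $(a)_{m+k}=(a)_k(aq^k)_m=(a)_m(aq^m)_k$, one finds $D_x^{(\bs\xi)}D_{x+1}^{(\bs\xi)}/(D_xD_{x+1})=(bq^n)_k/(a)_k$ and $D_x^{(\bs\eta)}/D_x=(aq^m)_k/(a)_k$. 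A one-line calculation from the two $N$-relations above gives the key collapse $bq^n=aq^{\eta^{x+1}}$, so defining $c$ by $q^{-c}=a$ the bond identity reduces to
\[
\sum_k \Phi'(k\mid \xi^x,\mu)\,\frac{(q^{-c+\eta^{x+1}})_k}{(q^{-c})_k}=\sum_k \Phi'(k\mid \eta^{x+1},\mu)\,\frac{(q^{-c+\xi^x})_k}{(q^{-c})_k},
\]
which is exactly Corollary~\ref{Cor:1} with $x\leftrightarrow \xi^x$ and $y\leftrightarrow \eta^{x+1}$.

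The main obstacle is the combinatorial bookkeeping required to identify which $D_z$ change under each jump and to verify that the various Pochhammer bases collapse into the precise form $(q^{-c+y})_k/(q^{-c})_k$ needed to invoke Corollary~\ref{Cor:1}; the coincidence $bq^n=aq^{\eta^{x+1}}$ is exactly what makes this possible and is the algebraic reason the function $D$ works. The multi-species analogue (Theorem~\ref{qHahn}) is proven by the identical strategy, the only change being that Corollary~\ref{Cor:1} is replaced by Proposition~\ref{prop:1}(2); this is the reason the multivariate identity was developed in the preceding subsection.
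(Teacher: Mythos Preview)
Your proposal is correct and follows essentially the same strategy as the paper's proof: reduce the duality relation to a bond identity, compute the Pochhammer ratios on both sides, and invoke Corollary~\ref{Cor:1}. Your bookkeeping is slightly more streamlined (isolating the single relation $bq^n=aq^{\eta^{x+1}}$), but the logic is identical; the only thing you have omitted is that the paper also proves the discrete-time version of the statement, where the same ratio computations reduce instead to Lemma~\ref{lemma:1}.
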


\begin{proof}

Let 
\begin{equation}
    D_x(\boldsymbol\eta,\boldsymbol\xi)=\left(q^{-\left(N_{x}^-(\boldsymbol\xi)+N_{x+1}^+(\boldsymbol\eta)\right)+1/2};q\right)_{\xi^x}.  
\end{equation}

Recall that  because the process is a nearest neighbour zero--range process the generator of $\boldsymbol\eta$ can be written as $\mathcal{L}=\sum_{x=1}^{L-1}\mathcal{L}_{x+1}$, generator of $\boldsymbol\xi$ can be written as $\tilde{\mathcal{L}}=\sum_{x=1}^{L-1}\tilde{\mathcal{L}}_x$, where $\mathcal{L}_x$ are the two--site generators. 

Since $D_x(\boldsymbol\eta,\boldsymbol\xi)$ involves counting the number of particles in $\boldsymbol\eta$ at sites to the right of $x$ (exclusive), if $\mathcal{L}(\boldsymbol\eta,\boldsymbol\sigma)\neq 0$ then $D_y(\boldsymbol\sigma,\boldsymbol\xi)=D_y(\boldsymbol\eta,\boldsymbol\xi)$ for $y\neq x$, where $\boldsymbol\sigma:=\boldsymbol\eta+\gamma^{(x)}-\gamma^{(x+1)}$, for  $\gamma\in \mathbb{Z}$ and $\gamma^{(x)}$ denoting $\gamma$ particles at site $x$. In other words,  $\boldsymbol\sigma$ is obtained from $\eta$ by moving $\gamma$ particles from $x$ to $x+1$.
\begin{equation*}
    D(\boldsymbol\sigma,\boldsymbol\xi)=D(\boldsymbol\eta,\boldsymbol\xi)\frac{\left(q^{-\left(N_{x}^-(\boldsymbol\xi)+N_{x+1}^+(\boldsymbol\eta)\right)+\gamma+1/2};q\right)_{\xi^x}}{\left(q^{-\left(N_{x}^-(\boldsymbol\xi)+N_{x+1}^+(\boldsymbol\eta)\right)+1/2};q\right)_{\xi^x}}.
\end{equation*}
Let $\tilde{\boldsymbol\sigma}=\boldsymbol\xi+\gamma^{(x+1)}-\gamma^{(x)}$,
\begin{align*}
D(\boldsymbol\eta,\tilde{\boldsymbol\sigma})& =D(\boldsymbol\eta,\boldsymbol\xi)\frac{\left(q^{-\left(N_{x}^-(\boldsymbol\xi)+N_{x+1}^+(\boldsymbol\eta)\right)+1/2+\gamma};q\right)_{\xi^x-\gamma}}{\left(q^{-\left(N_{x}^-(\boldsymbol\xi)+N_{x+1}^+(\boldsymbol\eta)\right)+1/2};q\right)_{\xi^x}}\\   
&\quad\times\frac{\left(q^{-\left(N_{x+1}^-(\boldsymbol\xi)+N_{x+2}^+(\boldsymbol\eta)\right)+1/2};q\right)_{\xi^{x+1}+\gamma}}{\left(q^{-\left(N_{x+1}^-(\boldsymbol\xi)+N_{x+2}^+(\boldsymbol\eta)\right)+1/2};q\right)_{\xi^{x+1}}}
\\&=D(\boldsymbol\eta,\boldsymbol\xi)\frac{\left(q^{-\left(N_{x}^-(\boldsymbol\xi)+N_{x+2}^+(\boldsymbol\eta)\right)+1/2};q\right)_\gamma}{\left(q^{-\left(N_{x}^-(\boldsymbol\xi)+N_{x+1}^+(\boldsymbol\eta)\right)+1/2};q\right)_\gamma}\\    &=D(\boldsymbol\eta,\boldsymbol\xi)\frac{\left(q^{-\left(N_{x}^-(\boldsymbol\xi)+N_{x+1}^+(\boldsymbol\eta)\right)+1/2+\gamma};q\right)_{\eta^{x+1}}}{\left(q^{-\left(N_{x}^-(\boldsymbol\xi)+N_{x+1}^+(\boldsymbol\eta)\right)+1/2};q\right)_{\eta^{x+1}}},
\end{align*}

\begin{align*}
\mathcal{L}&D(\boldsymbol\eta,\boldsymbol\xi) \\= & D(\boldsymbol\eta,\boldsymbol\xi)\sum_{x=1}^{L-1}\sum_{\gamma}\mathcal{L}_{x+1}(\boldsymbol\eta,\boldsymbol\eta+\gamma^{(x)}-\gamma^{(x+1)})\frac{\left(q^{-\left(N_{x}^-(\boldsymbol\xi)+N_{x+1}^+(\boldsymbol\eta)\right)+\gamma+1/2};q\right)_{\xi^x}}{\left(q^{-\left(N_{x}^-(\boldsymbol\xi)+N_{x+1}^+(\boldsymbol\eta)\right)+1/2};q\right)_{\xi^x}} 
\end{align*}
and
\begin{align*}
    & D\tilde{\mathcal{L}}^*(\boldsymbol\eta,\boldsymbol\xi) \\ & =D(\boldsymbol\eta,\boldsymbol\xi)\sum_{x=1}^{L-1}\sum_{\gamma}\mathcal{\tilde{L}}_{x}(\boldsymbol\xi,\boldsymbol\xi+\gamma^{(x+1)}-\gamma^{(x)})\frac{\left(q^{-\left(N_{x}^-(\boldsymbol\xi)+N_{x+1}^+(\boldsymbol\eta)\right)+1/2+\gamma};q\right)_{\eta^{x+1}}}{\left(q^{-\left(N_{x}^-(\boldsymbol\xi)+N_{x+1}^+(\boldsymbol\eta)\right)+1/2};q\right)_{\eta^{x+1}}}.
\end{align*}
Therefore, it suffices to show 
\begin{multline*}
   \sum_{0\le \gamma\le \eta^{x+1}}\Phi'(\gamma\mid\eta^{x+1})\frac{\left(q^{-\left(N_{x}^-(\xi)+N_{x+1}^+(\eta)\right)+\gamma+1/2};q\right)_{\xi^x}}{\left(q^{-\left(N_{x}^-(\boldsymbol\xi)+N_{x+1}^+(\boldsymbol\eta)\right)+1/2};q\right)_{\xi^x}} =\\
    \sum_{0\le \gamma\le \xi^{x}}\Phi'(\gamma\mid\xi^{x})\frac{\left(q^{-\left(N_{x}^-(\boldsymbol\xi)+N_{x+1}^+(\boldsymbol\eta)\right)+1/2+\gamma};q\right)_{\eta^{x+1}}}{\left(q^{-\left(N_{x}^-(\boldsymbol\xi)+N_{x+1}^+(\boldsymbol\eta)\right)+1/2};q\right)_{\eta^{x+1}}},
\end{multline*}
which is just Corollary \ref{Cor:1}.

Next turn to the discrete-time $q$-Hahn Boson process.
If the evolution is to the left, then the transition probabilities are
\begin{equation*}
    P(\boldsymbol\eta,\boldsymbol\zeta)=\prod_{x=2}^L\Phi(\gamma^x\mid\eta^x),
\end{equation*}
where $\zeta^x=\eta^x-\gamma^x+\gamma^{x+1}$, $1\le x\le L$ and $\gamma^1=\gamma^{L+1}=0$. In words, this denotes  that $\gamma^x$ particles have left lattice site $x$ in particle configuration $\boldsymbol\eta$.
For each $x$,
\begin{equation*}
   D_x (\boldsymbol\zeta,\boldsymbol\xi)=D_x(\boldsymbol\eta,\boldsymbol\xi)\frac{\left(q^{-\left(N_{x}^-(\boldsymbol\xi)+N_{x+1}^+(\boldsymbol\eta)\right)+1/2+\gamma^{x+1}};q\right)_{\xi^x}}{\left(q^{-\left(N_{x}^-(\boldsymbol\xi)+N_{x+1}^+(\boldsymbol\eta)\right)+1/2};q\right)_{\xi^x}}.
\end{equation*}
Let $\mathfrak{X}$ denote the set of particle configurations at one site and  $\mathfrak{X}^L$ the Cartesian product.
Thus, after re-indexing,
\begin{align*}        \sum_{\boldsymbol\zeta\in\mathfrak{X}^L}&P(\boldsymbol\eta,\boldsymbol\zeta) D(\boldsymbol\zeta,\boldsymbol\xi) \\ & =D(\boldsymbol\eta,\boldsymbol\xi)\sum_{\boldsymbol\gamma\in \{0\}\times \mathfrak{X}^{L-1} }\prod_{x=2}^L\Phi(\gamma^x\mid\eta^x)\frac{\left(q^{-\left(N_{x-1}^-(\boldsymbol\xi)+N_{x}^+(\boldsymbol\eta)\right)+1/2+\gamma^{x}};q\right)_{\xi^{x-1}}}{\left(q^{-\left(N_{x-1}^-(\boldsymbol\xi)+N_{x}^+(\boldsymbol\eta)\right)+1/2};q\right)_{\xi^{x-1}}}.
\end{align*}

If the evolution is to the right, 
\begin{equation*}
    P(\boldsymbol\xi,\boldsymbol\zeta)=\prod_{x=1}^{L-1}\Phi(\gamma^x\mid\xi^x),
\end{equation*}
where $\zeta^x=\xi^x-\gamma^x+\gamma^{x-1}$, $1\le x\le L$ and $\gamma^0=\gamma^{L}=0$.
For each $x$,
\begin{equation*}
\begin{array}{cl}
    D (\boldsymbol\eta,\boldsymbol\zeta) &   =D(\boldsymbol\eta,\boldsymbol\xi)\prod_{x=1}^L\frac{\left(q^{-\left(N_{x}^-(\boldsymbol\xi)+N_{x+1}^+(\boldsymbol\eta)\right)+1/2+\gamma^x};q\right)_{\xi^{x}-\gamma^x+\gamma^{x-1}}}{\left(q^{-\left(N_{x}^-(\boldsymbol\xi)+N_{x+1}^+(\boldsymbol\eta)\right)+1/2};q\right)_{\xi^{x}}}\\ 
     &  =D(\boldsymbol\eta,\boldsymbol\xi)\prod_{x=1}^L\frac{\left(q^{-\left(N_{x}^-(\boldsymbol\xi)+N_{x+1}^+(\boldsymbol\eta)\right)+1/2+\xi^x};q\right)_{\gamma^{x-1}}}{\left(q^{-\left(N_{x}^-(\boldsymbol\xi)+N_{x+1}^+(\boldsymbol\eta)\right)+1/2};q\right)_{\gamma^x}}\\
     & =D(\boldsymbol\eta,\boldsymbol\xi)\prod_{x=1}^L\frac{\left(q^{-\left(N_{x}^-(\boldsymbol\xi)+N_{x+2}^+(\boldsymbol\eta)\right)+1/2};q\right)_{\gamma^{x}}}{\left(q^{-\left(N_{x}^-(\boldsymbol\xi)+N_{x+1}^+(\boldsymbol\eta)\right)+1/2};q\right)_{\gamma^x}}\\
     & 
     =D(\boldsymbol\eta,\boldsymbol\xi)\prod_{x=1}^L\frac{\left(q^{-\left(N_{x}^-(\boldsymbol\xi)+N_{x+1}^+(\boldsymbol\eta)\right)+1/2+\gamma^x};q\right)_{\eta^{x+1}}}{\left(q^{-\left(N_{x}^-(\boldsymbol\xi)+N_{x+1}^+(\boldsymbol\eta)\right)+1/2};q\right)_{\eta^{x+1}}}.
\end{array}
\end{equation*}
Thus,
\begin{multline*}
     \sum_{\boldsymbol\zeta\in\mathfrak{X}^L}D(\boldsymbol\eta,\boldsymbol\zeta)P(\boldsymbol\xi,\boldsymbol\zeta)=D(\boldsymbol\eta,\boldsymbol\xi)\\\times\sum_{\gamma\in  \mathfrak{X}^{L-1}\times\{0\} }\prod_{x=2}^{L}\Phi(\gamma^{x-1}\mid\xi^{x-1})\frac{\left(q^{-\left(N_{x-1}^-(\boldsymbol\xi)+N_{x}^+(\boldsymbol\eta)\right)+1/2+\gamma^{x-1}};q\right)_{\eta^{x}}}{\left(q^{-\left(N_{x-1}^-(\boldsymbol\xi)+N_{x}^+(\boldsymbol\eta)\right)+1/2};q\right)_{\eta^{x}}}.
\end{multline*}
Therefore, it suffices to show that
\begin{multline*}
    \sum_{\gamma^x\in  \mathfrak{X} }\Phi(\gamma^x\mid\eta^x) \frac{\left(q^{-\left(N_{x-1}^-(\boldsymbol\xi)+N_{x}^+(\boldsymbol\eta)\right)+1/2+\gamma^{x}};q\right)_{\xi^{x-1}}}{\left(q^{-\left(N_{x-1}^-(\boldsymbol\xi)+N_{x}^+(\boldsymbol\eta)\right)+1/2};q\right)_{\xi^{x-1}}} \\=\sum_{\gamma^{x-1}\in  \mathfrak{X} }\Phi(\gamma^{x-1}\mid\xi^{x-1})\frac{\left(q^{-\left(N_{x-1}^-(\boldsymbol\xi)+N_{x}^+(\boldsymbol\eta)\right)+1/2+\gamma^{x-1}};q\right)_{\eta^{x}}}{\left(q^{-\left(N_{x-1}^-(\boldsymbol\xi)+N_{x}^+(\boldsymbol\eta)\right)+1/2};q\right)_{\eta^{x}}},
\end{multline*}
which is just Lemma \ref{lemma:1}.
\end{proof}

Now we prove similar results for the multi--species case. 
\begin{theorem} 
Recall the function $h$ defined in \eqref{eq:2}:
\begin{equation*}
h(\boldsymbol\xi,\boldsymbol\eta)=\sum_{x \in \Lambda_L}\sum_{i=0}^{n-1}\left(-\xi_i^xN_{x+1}^+(\boldsymbol\eta_{[0,n-2-i]})+\eta_i^xN_{x}^+(\boldsymbol\xi_{[0,n-2-i]})\right).
\end{equation*}
Then
\begin{equation*}
\begin{array}{cl}
    D(\boldsymbol\eta,\boldsymbol\xi)   &   =q^{\frac{h(\boldsymbol\xi,\boldsymbol\eta)}{2}} \prod_{i=0}^{n-1}\prod_{x=1}^L{}_1\phi_0\left(q^{-\xi_i^x};q,q^{-\left(N_{x-1}^-(\boldsymbol\xi_i)+N_{x+1}^+(T(\boldsymbol\eta)_{i+1})\right)+1/2}\right)\\
     &   =q^{\frac{h(\boldsymbol\xi,\boldsymbol\eta)}{2}} \prod_{i=0}^{n-1}\prod_{x=1}^L\left(q^{-\left(N_{x}^-(\boldsymbol\xi_i)+N_{x+1}^+(T(\boldsymbol\eta)_{i+1})\right)+1/2} ;q\right)_{\xi_i^x}
     \\
     & =q^{\frac{h(\boldsymbol\xi,\boldsymbol\eta)}{2}} \prod_{i=0}^{n-1}\prod_{x=1}^L\left(q^{-\left(N_{x}^-(\boldsymbol\xi_i)+N_{x+1}^+(\boldsymbol\eta_{n-1-i})\right)+1/2} ;q\right)_{\xi_i^x}
\end{array}
\end{equation*} is a space reversed self-duality function for $n$-species $q$-Hahn Boson process where $\boldsymbol\eta$ evolves with total asymmetry to the left and $\boldsymbol\xi$ evolves with total asymmetry to the right.
\end{theorem}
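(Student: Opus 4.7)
The plan is to parallel the single-species $q$--Hahn proof given just above, with Proposition \ref{prop:1} playing the role of Lemma \ref{lemma:1} and Corollary \ref{Cor:1}. Since the $q$--Hahn TAZRP is a nearest-neighbor zero-range process, both generators decompose as $\mathcal{L}=\sum_{x}\mathcal{L}_{x,x+1}$ and $\tilde{\mathcal{L}}=\sum_{x}\tilde{\mathcal{L}}_{x,x+1}$. A single jump at bond $(x,x+1)$ changes only $\bs\eta^{x},\bs\eta^{x+1}$ (resp.\ $\bs\xi^{x},\bs\xi^{x+1}$), so in the product defining $D(\bs\eta,\bs\xi)$ only the factors whose Pochhammer argument depends on the counts $N_{x+1}^{+}(\bs\eta_{n-1-i})$ (resp.\ $N_{x}^{-}(\bs\xi_{i}))$ are affected, and the intertwining $\mathcal{L}D=D\tilde{\mathcal{L}}^{*}$ reduces to a local identity at each bond, exactly as in the single-species calculation.

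I would first compute the ratio $D(\bs\sigma,\bs\xi)/D(\bs\eta,\bs\xi)$ when $\bs\sigma$ differs from $\bs\eta$ by moving a vector $\bs\gamma=(\gamma_{0},\dots,\gamma_{n-1})$ of species counts from $x+1$ to $x$. The Pochhammer product contributes $\prod_{i}(q^{-c_{i}+\gamma_{n-1-i}};q^{2})_{\xi_{i}^{x}}/(q^{-c_{i}};q^{2})_{\xi_{i}^{x}}$ with $c_{i}$ determined by the cumulative counts flanking the bond, while the change in the prefactor $q^{h(\bs\xi,\cdot)/2}$ contributes a power of $q$ equal to $\sum_{i}\gamma_{[0,n-2-i]}(N_{x}^{-}(\bs\xi_{i})+\xi_{i}^{x})$. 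Applying the reindexing $\sum_{i}\gamma_{[0,n-2-i]}\xi_{i}^{x}=\sum_{i}\gamma_{i}\xi_{[0,n-2-i]}^{x}$ converts the relevant piece into precisely the factor $\prod_{i}q^{\gamma_{i}\xi_{[0,n-2-i]}^{x}}$ that appears inside the multi-species $\Phi$ in Proposition \ref{prop:1}, while the remaining term $\sum_{i}\gamma_{[0,n-2-i]}N_{x}^{-}(\bs\xi_{i})$ is absorbed into a shift of the parameters $c_{i}$. A symmetric computation of $D(\bs\eta,\tilde{\bs\sigma})/D(\bs\eta,\bs\xi)$ for $\tilde{\bs\sigma}$ obtained from $\bs\xi$ by moving $\bs\zeta$ particles from $x$ to $x+1$, using the same telescoping identity employed in the single-species argument, produces $\prod_{i}q^{\zeta_{i}\eta_{[0,n-2-i]}^{x+1}}(q^{-c_{i}+\zeta_{i}})_{\eta_{n-1-i}}/(q^{-c_{i}})_{\eta_{n-1-i}}$.

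Matching the two sides of $\mathcal{L}D=D\tilde{\mathcal{L}}^{*}$ bond by bond then reduces the duality to exactly Proposition \ref{prop:1}(2) in the continuous-time case (with the $\bs\eta,\bs\xi$ appearing in that proposition taken to be $\bs\eta^{x+1},\bs\xi^{x}$ here), and to Proposition \ref{prop:1}(1) in discrete time, where the transition probabilities factor as $\prod_{x}\Phi(\bs\gamma^{x}\mid\bs\eta^{x},\lambda,\mu)$ and the matching is carried out site by site. The main obstacle will be the bookkeeping of the prefactor $q^{h(\bs\xi,\bs\eta)/2}$: one must verify that every $q$--power it produces under a local jump either matches the factor $q^{\gamma_{i}\xi_{[0,n-2-i]}}$ (or $q^{\zeta_{i}\eta_{[0,n-2-i]}}$) inside $\Phi$ or can be absorbed cleanly into the shifts of the $c_{i}$ in the Pochhammers. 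Since $h$ is a sum of bilinear coupling terms $-\xi_{i}^{x}N_{x+1}^{+}(\bs\eta_{[0,n-2-i]})+\eta_{i}^{x}N_{x}^{+}(\bs\xi_{[0,n-2-i]})$ whose behavior under a single-bond jump is transparent, and since the combinatorial identity exchanging cumulants converts one summation pattern into the other, I expect the matching to succeed, but verifying that the residual half-integer shifts of $c_{i}$ align with the $q^{2}$-base Pochhammers in the final expression is the delicate step.
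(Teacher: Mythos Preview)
Your approach is essentially the same as the paper's: reduce the intertwining $\mathcal{L}D=D\tilde{\mathcal{L}}^{*}$ to a local identity at each bond and then invoke Proposition~\ref{prop:1}(1) (discrete time) or (2) (continuous time). That is exactly how the paper proceeds, including the separate treatment of the two time settings and the reindexing identity $\sum_i \gamma_{[0,n-2-i]}\xi_i^x=\sum_i \gamma_i\xi_{[0,n-2-i]}^x$.

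One small correction to your bookkeeping: the variation of $h$ under a single-bond jump is cleaner than you anticipate. If $\bs\sigma=\bs\eta+\bs\gamma^{(x)}-\bs\gamma^{(x+1)}$, then the first sum in $h$ changes by $\sum_i \xi_i^x\gamma_{[0,n-2-i]}$ (only the factor at $y=x$ sees a different $N_{x+1}^{+}$), and the second sum changes by $\sum_i \gamma_i\bigl(N_x^{+}(\bs\xi_{[0,n-2-i]})-N_{x+1}^{+}(\bs\xi_{[0,n-2-i]})\bigr)=\sum_i \gamma_i\xi_{[0,n-2-i]}^x$. These two contributions are equal by the reindexing identity, so after dividing by $2$ the prefactor contributes exactly $q^{\sum_i \gamma_i\xi_{[0,n-2-i]}^x}$, with \emph{no} residual term $\sum_i \gamma_{[0,n-2-i]}N_x^{-}(\bs\xi_i)$ to absorb into the $c_i$. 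The function $h$ is tailored precisely so that this happens; the $c_i$ in the Pochhammer factors come entirely from the product $\prod_{i,x}(q^{-(N_x^{-}(\bs\xi_i)+N_{x+1}^{+}(\bs\eta_{n-1-i}))+1/2};q)_{\xi_i^x}$ itself, not from $h$. The analogous statement holds on the $\bs\xi$ side. Once you make this correction, the match with Proposition~\ref{prop:1} is immediate and the ``delicate step'' you flagged disappears.
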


\begin{proof}

Recall that the generator of $\boldsymbol\eta$ can be written as $\mathcal{L}=\sum_{x=1}^{L-1}\mathcal{L}_{x+1}$, generator of $\boldsymbol\xi$ can be written as $\tilde{\mathcal{L}}=\sum_{x=1}^{L-1}\tilde{\mathcal{L}}_x$.

 Let $\boldsymbol\sigma=\boldsymbol\eta+\boldsymbol\gamma^{(x)}-\boldsymbol\gamma^{(x+1)}$ for some $\boldsymbol\gamma\in \mathbb{Z}^{n}$,

\begin{multline*} D(\boldsymbol\sigma,\boldsymbol\xi)=D(\boldsymbol\eta,\boldsymbol\xi)q^{\sum_{i=0}^{n-1}\gamma_i\xi_{[0,n-2-i]}^{x}}\\
\times\prod_{i=0}^{n-1}\frac{\left(q^{-\left(N_{x}^-(\boldsymbol\xi_i)+N_{x+1}^+(\boldsymbol\eta_{n-1-i})\right)+\gamma_{n-1-i}+1/2};q\right)_{\xi_i^x}}{\left(q^{-\left(N_{x}^-(\boldsymbol\xi_i)+N_{x+1}^+(\boldsymbol\eta_{n-1-i})\right)+1/2};q\right)_{\xi_i^x}}.
\end{multline*}

Let $\tilde{\boldsymbol\sigma}=\boldsymbol\xi+\boldsymbol\gamma^{(x+1)}-\boldsymbol\gamma^{(x)}$,
\begin{equation*}
  D(\boldsymbol\eta,\tilde{\boldsymbol\sigma})
  =D(\boldsymbol\eta,\boldsymbol\xi)q^{\sum_{i=1}^{n-1}\gamma_i\mid\eta_{[0,n-2-i]}^{x+1}\mid}\prod_{i=0}^{n-1}\frac{\left(q^{-\left(N_{x}^-(\boldsymbol\xi_i)+N_{x+2}^+(\boldsymbol\eta_{n-1-i})\right)+1/2};q\right)_{\gamma_i}}{\left(q^{-\left(N_{x}^-(\boldsymbol\xi_i)+N_{x+1}^+(\boldsymbol\eta_{n-1-i})\right)+1/2};q\right)_{\gamma_i}}.\\
\end{equation*}

\begin{equation*}
    \mathcal{L}D(\boldsymbol\eta,\boldsymbol\xi)=D(\boldsymbol\eta,\boldsymbol\xi)\sum_{x=1}^{L-1}\sum_{\boldsymbol\gamma}\mathcal{L}_{x+1}(\boldsymbol\eta,\boldsymbol\eta+\boldsymbol\gamma^{(x)}-\boldsymbol\gamma^{(x+1)})\frac{D(\boldsymbol\eta+\boldsymbol\gamma^{(x)}-\boldsymbol\gamma^{(x+1)},\boldsymbol\xi)}{D(\boldsymbol\eta,\boldsymbol\xi)}.
\end{equation*}

\begin{equation*}
    D\tilde{\mathcal{L}}^*(\boldsymbol\eta,\boldsymbol\xi)=D(\boldsymbol\eta,\boldsymbol\xi)\sum_{x=1}^{L-1}\sum_{\boldsymbol\gamma}\mathcal{\tilde{L}}_{x}(\boldsymbol\xi,\boldsymbol\xi+\boldsymbol\gamma^{(x+1)}-\boldsymbol\gamma^{(x)})\frac{D(\boldsymbol\eta,\boldsymbol\xi+\boldsymbol\gamma^{(x+1)}-\boldsymbol\gamma^{(x)})}{D(\boldsymbol\eta,\boldsymbol\xi)}.
\end{equation*}
Therefore, it suffices to show 
\begin{align*}
 &  \sum_{ \boldsymbol\gamma\le \boldsymbol\eta^{x+1}}\Phi'(\boldsymbol\gamma\mid\boldsymbol\eta^{x+1})q^{\sum_{i=0}^{n-1}\gamma_i\xi_{[0,n-2-i]}^{x}}\\
 &\quad\times \prod_{i=0}^{n-1}\frac{\left(q^{-\left(N_{x}^-(\boldsymbol\xi_i)+N_{x+1}^+(\boldsymbol\eta_{n-1-i})\right)+\boldsymbol\gamma_{n-1-i}+1/2};q\right)_{\xi_i^x}}{\left(q^{-\left(N_{x}^-(\boldsymbol\xi_i)+N_{x+1}^+(\boldsymbol\eta_{n-1-i})\right)+1/2};q\right)_{\xi_i^x}}\\
 &  = \sum_{\boldsymbol\gamma\le \boldsymbol\xi^{x}}\Phi'(\boldsymbol\gamma\mid\boldsymbol\xi^{x})q^{\sum_{i=0}^{n-1}\boldsymbol\gamma_i\eta_{[0,n-2-i]}^{x+1}}\prod_{i=0}^{n-1}\frac{\left(q^{-\left(N_{x}^-(\boldsymbol\xi_i)+N_{x+2}^+(\boldsymbol\eta_{n-1-i})\right)+1/2};q\right)_{\gamma_i}}{\left(q^{-\left(N_{x}^-(\boldsymbol\xi_i)+N_{x+1}^+(\boldsymbol\eta_{n-1-i})\right)+1/2};q\right)_{\gamma_i}} ,
\end{align*}
which is just Proposition \ref{prop:1}.

Next turn to the discrete-time $q$-Hahn Boson process.
If the evolution is to the left, then the transition probabilities are
\begin{equation*}
    P(\boldsymbol\eta,\boldsymbol\zeta)=\prod_{x=2}^L\Phi(\boldsymbol\gamma^x\mid\boldsymbol\eta^x),
\end{equation*}
where $\boldsymbol\zeta^x=\boldsymbol\eta^x-\boldsymbol\gamma^x+\boldsymbol\gamma^{x+1}$, $1\le x\le L$ and $\boldsymbol\gamma^1=\boldsymbol\gamma^{L+1}=(0,\ldots,0)$.

\begin{equation*}
  \frac{D (\boldsymbol\zeta,\boldsymbol\xi)}{D(\boldsymbol\eta,\boldsymbol\xi)} =\prod_{x=1}^L\prod_{i=0}^{n-1}q^{\gamma_i^{x+1}\xi_{[0,n-2-i]}^{x}}\frac{\left(q^{-\left(N_{x}^-(\boldsymbol\xi_i)+N_{x+1}^+(\boldsymbol\eta_{n-1-i})\right)+1/2+\gamma_{n-1-i}^{x+1}};q\right)_{\xi_i^x}}{\left(q^{-\left(N_{x}^-(\boldsymbol\xi_i)+N_{x+1}^+(\boldsymbol\eta_{n-1-i})\right)+1/2};q\right)_{\xi_i^x}}.
\end{equation*}

Thus, after re-indexing,
\begin{multline*}
      \sum_{\boldsymbol\zeta\in\mathfrak{X}^L}P(\boldsymbol\eta,\boldsymbol\zeta)D(\boldsymbol\zeta,\boldsymbol\xi)= D(\boldsymbol\eta,\boldsymbol\xi)
       \sum_{\boldsymbol\gamma\in \{0\}\times \mathfrak{X}^{L-1} }\prod_{x=2}^L\Phi(\boldsymbol\gamma^x\mid\boldsymbol\eta^x)\\\times\prod_{i=0}^{n-1}q^{\gamma_i^{x}\xi_{[0,n-2-i]}^{x-1}}\frac{\left(q^{-\left(N_{x-1}^-(\boldsymbol\xi_i)+N_{x}^+(\boldsymbol\eta_{n-1-i})\right)+1/2+\gamma_{n-1-i}^{x}};q\right)_{\xi_i^{x-1}}}{\left(q^{-\left(N_{x-1}^-(\boldsymbol\xi_i)+N_{x}^+(\boldsymbol\eta_{n-1-i})\right)+1/2};q\right)_{\xi_i^{x-1}}}.
\end{multline*}

If the evolution is to the right, 
\begin{equation*}
    P(\boldsymbol\xi,\boldsymbol\zeta)=\prod_{x=1}^{L-1}\Phi(\boldsymbol\gamma^x\mid\boldsymbol\xi^x),
\end{equation*}
where $\boldsymbol\zeta^x=\boldsymbol\xi^x-\boldsymbol\gamma^x+\boldsymbol\gamma^{x-1}$, $1\le x\le L$ and $\gamma^0=\gamma^{L}=(0,\ldots,0)$.

\begin{equation*}
\begin{array}{cl}
    &\frac{D (\boldsymbol\eta,\boldsymbol\zeta)}{D(\boldsymbol\eta,\boldsymbol\xi)}   \\ & =\prod_{x=1}^L\prod_{i=0}^{n-1}q^{\gamma_i^{x-1}\eta_{[0,n-2-i]}^{x}}\frac{\left(q^{-\left(N_{x}^-(\boldsymbol\xi_i)+N_{x+1}^+(\boldsymbol\eta_{n-1-i})\right)+1/2+\gamma^x_i};q\right)_{\xi_i^{x}-\gamma_i^x+\gamma_i^{x-1}}}{\left(q^{-\left(N_{x}^-(\boldsymbol\xi_i)+N_{x+1}^+(\boldsymbol\eta_{n-1-i})\right)+1/2};q\right)_{\xi_i^{x}}}\\ 
     & =\prod_{x=1}^L\prod_{i=0}^{n-1}q^{\gamma_i^{x-1}\eta_{[0,n-2-i]}^{x}}\frac{\left(q^{-\left(N_{x}^-(\boldsymbol\xi_i)+N_{x+1}^+(\boldsymbol\eta_{n-1-i})\right)+1/2+\gamma_i^x};q\right)_{\eta_{n-1-i}^{x+1}}}{\left(q^{-\left(N_{x}^-(\boldsymbol\xi_i)+N_{x+1}^+(\boldsymbol\eta_{n-1-i})\right)+1/2};q\right)_{\eta_{n-1-i}^{x+1}}}.
\end{array}
\end{equation*}
Thus,
\begin{multline*}
    \sum_{\boldsymbol\zeta\in\mathfrak{X}^L}D(\boldsymbol\eta,\boldsymbol\zeta)P(\boldsymbol\xi,\boldsymbol\zeta)=D(\boldsymbol\eta,\boldsymbol\xi)\sum_{\boldsymbol\gamma\in  \mathfrak{X}^{L-1}\times\{0\} } \prod_{x=2}^{L}
   \Phi(\boldsymbol\gamma^{x-1}\mid\boldsymbol\xi^{x-1})\\\times\prod_{i=0}^{n-1}q^{\gamma_i^{x-1}\eta_{[0,n-2-i]}^{x}}\frac{\left(q^{-\left(N_{x-1}^-(\boldsymbol\xi_i)+N_{x}^+(\boldsymbol\eta_{n-1-i})\right)+1/2+\gamma_i^{x-1}};q\right)_{\eta_{n-1-i}^{x}}}{\left(q^{-\left(N_{x-1}^-(\boldsymbol\xi_i)+N_{x}^+(\boldsymbol\eta_{n-1-i})\right)+1/2};q\right)_{\eta_{n-1-i}^{x}}}  .
\end{multline*}
Therefore, it suffices to show that
\begin{align*}
   & \sum_{\boldsymbol\gamma^x\in  \mathfrak{X} }\Phi(\boldsymbol\gamma^x\mid\boldsymbol\eta^x) \prod_{i=0}^{n-1}q^{\gamma_i^{x}\xi_{[0,n-2-i]}^{x-1}}\\
    &\quad\times\frac{\left(q^{-\left(N_{x-1}^-(\boldsymbol\xi_i)+N_{x}^+(\boldsymbol\eta_{n-1-i})\right)+1/2+\gamma_{n-1-i}^{x}};q\right)_{\xi_i^{x-1}}}{\left(q^{-\left(N_{x-1}^-(\boldsymbol\xi_i)+N_{x}^+(\boldsymbol\eta_{n-1-i})\right)+1/2};q\right)_{\xi_i^{x-1}}}\\
    & =\sum_{\boldsymbol\gamma^{x-1}\in  \mathfrak{X} }\Phi(\boldsymbol\gamma^{x-1}\mid\boldsymbol\xi^{x-1})\prod_{i=0}^{n-1}q^{\gamma_i^{x-1}\eta_{[0,n-2-i]}^{x}} \\ &\quad\times\frac{\left(q^{-\left(N_{x-1}^-(\boldsymbol\xi_i)+N_{x}^+(\boldsymbol\eta_{n-1-i})\right)+1/2+\gamma_i^{x-1}};q\right)_{\eta_{n-1-i}^{x}}}{\left(q^{-\left(N_{x-1}^-(\boldsymbol\xi_i)+N_{x}^+(\boldsymbol\eta_{n-1-i})\right)+1/2};q\right)_{\eta_{n-1-i}^{x}}} ,
\end{align*}
which is just Proposition \ref{prop:1}.
\end{proof}
Thus concludes the proof of Theorem \ref{qHahn}.

\subsection{Proof of Theorem \ref{Application}}
Suppose $\boldsymbol\xi$ has $n_1$ species 0 particles at site $x_1$,  $n_2$ species 1 particles at site $x_2$. The dual process $\boldsymbol\eta$ has $1$ species 0 particles at site $y_1$,  $1$ species 1 particles at site $y_2$, where $\boldsymbol\eta$ evolves with total asymmetry to the left and $\boldsymbol\xi$ evolves with total asymmetry to the right. 
With these special cases,
\begin{align*}
      D^{q-TARZP} (\boldsymbol\xi,\boldsymbol\eta) = &    q^{-\sum_x\xi_0^xN_{x+1}^+(\eta_0)+\sum_x\eta_0^xN_{x}^+(\xi_0)}\\  &\times\prod_{x \in \Lambda_L }{}_1\phi_0\left(q^{-2\xi_0^x};q^2,q^{-2\left(N_{x-1}^-(\xi_0)+N_{x+1}^+(\eta_1)\right)+1}\right)\\
   & \times\prod_{x \in \Lambda_L }{}_1\phi_0\left(q^{-2\xi_1^x};q^2,q^{-2\left(N_{x-1}^-(\xi_1)+N_{x+1}^+(\eta_0)\right)+1}\right)
\end{align*}

now becomes
\begin{align*}
     & D^{q-TARZP}(\boldsymbol\xi,\boldsymbol\eta)\\
   &=q^{-n_1\delta_{y_1>x_1}+n_1\delta_{y_1\le x_1}}\left(q^{-2n_1-2\delta_{y_2>x_1}+1};q^2\right)_{n_1} \left(q^{-2n_2-2\delta_{y_1>x_2}+1};q^2\right)_{n_2}\\
 &  =\left\{\begin{array}{cc}
    q^{-n_1}\left(q^{-2n_1-1};q^2\right)_{n_1} \left(q^{-2n_2-1};q^2\right)_{n_2}    &  y_1>x_1,y_2>x_1\\
        q^{-n_1}\left(q^{-2n_1+1};q^2\right)_{n_1} \left(q^{-2n_2-1};q^2\right)_{n_2}    &  y_1>x_1,y_2\le x_1  \\
       q^{n_1}\left(q^{-2n_1-1};q^2\right)_{n_1} \left(q^{-2n_2-1};q^2\right)_{n_2}    &  x_2<y_1\le x_1,y_2>x_1\\
         q^{n_1}\left(q^{-2n_1+1};q^2\right)_{n_1} \left(q^{-2n_2-1};q^2\right)_{n_2}    &  x_2<y_1\le x_1,y_2\le x_1\\
      q^{n_1}\left(q^{-2n_1-1};q^2\right)_{n_1} \left(q^{-2n_2+1};q^2\right)_{n_2}    & y_1\le x_2,y_2>x_1\\
        q^{n_1}\left(q^{-2n_1+1};q^2\right)_{n_1} \left(q^{-2n_2+1};q^2\right)_{n_2}    & y_1\le x_2,y_2\le x_1\\
   \end{array}\right.
\end{align*}
when $x_1>x_2$, and
if    $x_1<x_2$
\begin{align*}   
D&^{q-TARZP}(\xi, \eta)  \\  & = \left\{\begin{array}{cc}  
   q^{n_1}\left(q^{-2n_1-1};q^2\right)_{n_1} \left(q^{-2n_2+1};q^2\right)_{n_2} :=\delta_1   &  y_1\le x_1,y_2>x_1\\
        q^{n_1}\left(q^{-2n_1+1};q^2\right)_{n_1} \left(q^{-2n_2+1};q^2\right)_{n_2}  :=\delta_2  &  y_1\le x_1,y_2\le x_1  \\
       q^{-n_1}\left(q^{-2n_1-1};q^2\right)_{n_1} \left(q^{-2n_2+1};q^2\right)_{n_2} :=\delta_3   &  x_1<y_1\le x_2,y_2>x_1\\
         q^{-n_1}\left(q^{-2n_1+1};q^2\right)_{n_1} \left(q^{-2n_2+1};q^2\right)_{n_2}  :=\delta_4  &  x_1<y_1\le x_2,y_2\le x_1\\
      q^{-n_1}\left(q^{-2n_1-1};q^2\right)_{n_1} \left(q^{-2n_2-1};q^2\right)_{n_2}  :=\delta_5  & y_1> x_2,y_2>x_1\\
        q^{-n_1}\left(q^{-2n_1+1};q^2\right)_{n_1} \left(q^{-2n_2-1};q^2\right)_{n_2}  :=\delta_6  & y_1> x_2,y_2\le x_1\\
   \end{array}\right.
  \end{align*}

We will focus on the $x_1<x_2$ case. Corresponding to these six cases, consider the probabilities
\begin{align*}
\mathbb{P}_{(y_1,y_2)}\left(  y_1(t) \leq x_1,y_2(t)>x_1 \right) &= p_1, \\
\mathbb{P}_{(y_1,y_2)}\left( y_1(t) \leq x_1,y_2(t)\le x_1 \right) &=p_2, \\
\mathbb{P}_{(y_1,y_2)}\left( x_1<y_1(t)\le x_2,y_2(t)>x_1  \right) &= p_3,\\
\mathbb{P}_{(y_1,y_2)}\left( x_1<y_1(t)\le x_2,y_2(t)\le x_1 \right) &= p_4,\\
\mathbb{P}_{(y_1,y_2)}\left( y_1(t)> x_2,y_2(t)>x_1 \right) &=p_5, \\
\mathbb{P}_{(y_1,y_2)}\left( y_1(t)> x_2,y_2(t)\le x_1 \right) &= p_6 .
\end{align*}
These six values can be solved by setting the six equations:
\begin{itemize}

\item 
 The value of $q_1:=p_1+p_2 = \mathbb{P}_{y_1}( y_1(t) \leq x_1)$ is just the usual random walk consisting of a single particle.
 
 \item
 The value of $q_2:=p_3+p_4= \mathbb{P}_{y_1}( x_1< y_1(t) \leq x_2)$ is just the usual random walk consisting of a single particle.
 
 \item
 The value of $q_3:=p_5+p_6 = \mathbb{P}_{y_1}( y_1(t) > x_2)$  is just the usual random walk consisting of a single particle.
 
 \item
The value of $q_4:=p_2$ had already been found in Theorem 3.3 of \cite{Kor-Lee} (one just needs to switch the words ``left'' and ``right'' to match the notation); this was then further generalized in Theorem 1.2 of \cite{Lee-Wang-2017}. 

 \item
If $y_1 \leq y_2$, then the value of $q_5:=p_5$ can be found by \cite{Kua21}. It is this value that requires $x_1<x_2$.

\item
The value of 
\begin{multline*}
q_6:=p_4+p_6 = \mathbb{P}_{(y_1,y_2)}(x_1<y_1(t), y_2(t) \leq x_1) \\
= \mathbb{P}_{y_1}(x_1<y_1(t)) - \mathbb{P}_{(y_1,y_2)}(x_1<y_1(t), x_1 < y_2(t) )
\end{multline*}
can be found similarly with the color--blind projection (the second probability is a special case of \cite{Kua21}). It also follows from duality applied to \cite{Lee-Wang-2017}.
 
\end{itemize}

Now note that the  $6\times 6$ matrix can be written as
$$
\left(\begin{array}{cccccc}1&1&0&0&0&0\\0&0&1&1&0&0\\0&0&0&0&1&1\\0&1&0&0&0&0\\0&0&0&0&1&0\\0&0&0&1&0&1\\\end{array}\right) =  \left(\begin{array}{cccccc}1&0&0&-1&0&0\\0&0&0&1&0&0\\0&1&1&0&-1&-1\\0&0&-1&0&1&1\\0&0&0&0&1&0\\0&0&1&0&-1&0\\\end{array}\right)^{-1}
$$
so that
$$
p_1 = q_1-q_4, \quad p_2=q_4, \quad p_3 = q_2+q_3-q_5-q_6,
$$
$$  \quad p_4 = -q_3 + q_5+q_6, \quad p_5=q_5, \quad p_6=q_3-q_5.$$

The values of $q_1,q_2,q_3$ can be obtained from a random walk with a single particle, while $q_4$ and $q_6$ can be obtained from the single--species two--particle $q$--TAZRP. Only $q_5$, which has the most complicated coefficient that does not factor, requires multi--species analysis. 

\begin{lemma}
The values of $q_1,q_2,q_3$ explicitly equal
\begin{align*}
q_1 &= {\delta_{y_1\le x_1+}\delta_{y_1> x_1} }Q(y_1-x_1,t) , \\
q_3 &=  1 - {\delta_{y_1\ge x_2}Q(y_1-x_2,t)}{-\delta_{y_1<x_2}}, \\
q_2 &= 1- q_2-q_3.
\end{align*}
\end{lemma}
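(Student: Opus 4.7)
The plan is to observe that each of $q_1, q_2, q_3$ is a marginal probability of a single dual walker, so the computation collapses to a one-dimensional continuous-time random walk. In the two-particle $q$-TAZRP with total asymmetry to the left, the species-$0$ walker $y_1(t)$ and the species-$1$ walker $y_2(t)$, each considered in isolation, jump one step at unit rate, because the $q$-deformed single-particle weight $\{1\}_{q^2} = 1$ carries no $q$-dependence and the priority factors from other particles vanish when a walker is alone on its sublattice. Thus in distribution $y_1(t) = y_1 - N(t)$ and $y_2(t) = y_2 - \widetilde N(t)$, where $N(t),\widetilde N(t)$ are Poisson processes of rate $1$.

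The next ingredient is the classical identity between the regularized incomplete gamma function and the Poisson CDF: for any positive integer $n$,
$$Q(n,t) \;=\; \frac{1}{\Gamma(n)}\int_{t}^{\infty} s^{n-1} e^{-s}\, ds \;=\; e^{-t}\sum_{k=0}^{n-1}\frac{t^{k}}{k!} \;=\; \mathbb{P}\bigl(N(t) \le n-1\bigr),$$
which follows by integration by parts in the definition of $\Gamma(n,t)$, or equivalently from recognising $\Gamma(n,1)$ as a sum of $n$ independent $\mathrm{Exp}(1)$ variables. Substituting this identity into the single-walker events that define $q_1$ and $q_3$ immediately produces the stated closed forms in terms of $Q(y_1 - x_1, t)$ and $Q(y_2 - x_2, t)$. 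The value of $q_2$ then drops out from $q_1 + q_2 + q_3 = 1$, since under the assumption $x_1 < x_2$ the three events $\{y_1(t) \le x_1\}$, $\{x_1 < y_1(t) \le x_2\}$, $\{y_1(t) > x_2\}$ partition the position space of the single walker.

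There is no real obstacle to this argument: once one records that each $q_i$ is insensitive to the interaction between the two dual particles, the lemma is purely a dictionary translation between the Poisson CDF of a single continuous-time random walker and the normalised incomplete gamma function. The only points requiring a little care are the verification of the unit jump rate from the $q$-TAZRP generator and the matching of inequality/support conventions so that $q_1$ appears as $Q(y_1 - x_1, t)$ rather than as its complement; this is a routine but necessary bookkeeping check against the definition of $Q$ adopted earlier in the paper.
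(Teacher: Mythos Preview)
Your approach is essentially the same as the paper's: recognise that $q_1,q_2,q_3$ are one-particle probabilities, compute them from the Poisson law of a rate-$1$ left-moving walker, invoke the identity between the Poisson tail and the normalised upper incomplete gamma function, and recover $q_2$ from $q_1+q_2+q_3=1$.

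One small caution: all three of $q_1,q_2,q_3$ are marginals of the \emph{species-$0$} walker $y_1(t)$ alone (the appearance of $y_2$ in the displayed value of $q_3$ is a typo for $y_1$). Your remark that the species-$1$ walker $y_2(t)$ ``considered in isolation'' also has a rate-$1$ Poisson marginal is not automatic in the coupled two-species $q$-TAZRP, since the lower-priority particle's jump rate is modified whenever it shares a site with the species-$0$ particle. This does not affect the present lemma because only $y_1$ actually enters, but the justification you give for $q_3$ leans on the wrong walker.
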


\begin{proof}
Recall that
$$
q_1 := \mathbb{P}_{y_1}( y_1(t) \leq x_1  ). 
$$
Then the  exact formula for $q_1$ is given by the Poisson distribution:
$$
q_1 = e^{-t} \sum_{m=y_1-x_1}^{\infty} \frac{t^m}{m!} = Q(y_1-x_1,t).
$$
Similarly, 
The value of $q_3:=p_5+p_6 = \mathbb{P}_{y_1}( y_1(t) > x_2)$ can be found similarly, and $q_1+q_2+q_3=1$ by definition.

\end{proof}

Now turn to the values of $q_4,q_5$ and $q_6$.
\begin{lemma}
The values of $q_4,q_5,q_6$ are explicitly:

\begin{align*}
q_4&=\frac{ 1}{(2\pi i)^2}\int_{C} \frac{d w_{1}}{w_{1}}  \int_{C} \frac{d w_{2}}{w_{2}} \frac{w_1-w_2}{w_1-qw_2} \prod_{j=1}^{2}\left[(1-w_j)^{-({y_j-x_1}+1)}e^{-w_{j} t}\right] ,\\
q_5&= \frac{ q}{(2\pi i)^2}\int_{\tilde{C}_{1}}  \frac{dw_1}{w_1}  \int_{\tilde{C}_{2}}  \frac{dw_2}{w_2} \frac{w_1-w_2}{w_1-qw_2} \frac{(1-w_1)^{-({y_1-x_2}+1)}(1-w_2)^{-({y_2-x_1}+1)}}{e^{(w_1+w_2)t }} ,\\
q_6 &= 1- q_1 - q\frac{ 1}{(2\pi i)^2}\int_{\tilde{C}_{1}}  \frac{dw_1}{w_1}  \int_{\tilde{C}_{2}}  \frac{dw_2}{w_2} \frac{w_1-w_2}{w_1-qw_2}\prod_{j=1}^2\left[ (1-w_j)^{-({y_j-x_1}+1)} e^{-w_jt  } \right],
\end{align*}
Here, the $C$ are ``large contours'' that are centered at $0$ and contain $1$, and the $\tilde{C}$ are ``small contours'' where $C_2$ contains $1$ and not $0$, whereas $C_1$ contains $qC_2,1$ and not $0$.
\end{lemma}
\begin{proof}
For $q_4:=p_2 = \mathbb{P}_{(y_1,y_2)}\left( y_1(t) \leq x_1,y_2(t)\le x_1 \right)$, take $N=n=2$ in Theorem 2.1 of \cite{Lee-Wang-2017}. This is stated separately as Proposition 2.1 of that paper, which states that for a left--most particle with right drift, 
$$\mathbb{P}_{Y}\left(x_{N}(t)>M\right)=\int_{C} \frac{d w_{1}}{w_{1}} \cdots \int_{C} \frac{d w_{N}}{w_{N}} B_{N}\left(w_{1}, \ldots, w_{N}\right) \prod_{j=1}^{N}\left[\prod_{k=y_{j}}^{x_1}\left(\frac{e^{-w_{j} t}}{1-w_{j}}\right) \right].$$
where the $C$ are ``large contours'' that are centered at $0$ and contain $1$. In the current paper, the drift is in the opposite direction, so one obtains the stated value of $q_4$.
 
For
\begin{multline*}
  q_6:=p_4+p_6 = \mathbb{P}_{(y_1,y_2)}(x_1<y_1(t), y_2(t) \leq x_1) \\
= \mathbb{P}_{y_1}(x_1<y_1(t)) - \mathbb{P}_{(y_1,y_2)}(x_1<y_1(t), x_1 < y_2(t) ),  
\end{multline*}
The first term is just $1-q_1$, while the second term can be obtained from Proposition 2.3 of \cite{Lee-Wang-2017}, which reads 
\begin{multline*}
  \mathbb{P}_{Y}\left(x_{1}(t) \leq M\right)=(-1)^{N} q^{N(N-1) / 2} f_{\tilde{C}_{1}} \frac{d w_{1}}{w_{1}} \cdots f_{\tilde{C}_{N}} \frac{d w_{N}}{w_{N}} B_{N}\left(w_{1}, \ldots, w_{N}\right)  \\ \times\prod_{j=1}^{N}\left[\prod_{k=y_{j}}^{x_1}\left(\frac{1}{1-w_{j}}\right) e^{-w_{j} t}\right].  
\end{multline*}

For $q_5$, recall from  Theorem 2.1 of  \cite{Kua21} that the contour integral that needs to be evaluated in the $N$--species case is
$$
\frac{(-1)^N q^{N(N-1)/2}}{(2\pi i)^N}\int \frac{dw_1}{w_1} \cdots \int \frac{dw_N}{w_N} B(w_1,\ldots,w_N) \prod_{j=1}^N (1-w_j)^{-{M}_j} e^{-w_jt  }   
$$
and plugging in $N=2$ yields the result.

\end{proof}

\section*{Acknowledgments}

 C.F. acknowledges the NSF grant DMS-1928930 which supported her participation at the Mathematical Sciences Research Institute in Berkeley, California, during the Fall 2021 semester. J.K. and Z.Z. would like to acknowledge the Simons Foundation Collaboration Grant 713614, for funding travel to Berkeley to visit C.F.

\section*{Declarations}

\subsection*{Funding}
This work was supported by NSF grant DMS-1928930 and Simons Foundation Collaboration Grant 713614. 
\subsection*{Conflict of interest/Competing interests} 
The authors declare they have no conflicts of interests.

\subsection*{ Ethics approval }
Not applicable.
\subsection*{ Consent to participate} Not applicable
\subsection*{Consent for publication}
The authors all consent.
\subsection*{ Availability of data and materials}
Not applicable.
\subsection*{ Code availability }
Not applicable.
\subsection*{ Authors' contributions}
All authors contribute equally to this work.

\end{document}